\newtheorem{theorem}{Theorem}
\newtheorem{corollary}[theorem]{Corollary}
\newtheorem{definition}[theorem]{Definition}
\newtheorem{lemma}[theorem]{Lemma}
\newtheorem{remark}[theorem]{Remark}
\newenvironment{proof}[1][Proof]{\noindent\textbf{#1.} }{\ \rule{0.5em}{0.5em}}
\begin{document}

\title{Part 1. Infinite series and logarithmic integrals associated to
differentiation with respect to parameters of the Whittaker $\mathrm{M}%
_{\kappa ,\mu }\left( x\right) $ function.}
\author{Alexander Apelblat$^{1}$, Juan Luis Gonz\'{a}lez-Santander$^{2}$. \\
$^{1}$ Department of Chemical Engineering, \\
Ben Gurion University of the
Negev, \\
84105 Beer Sheva, 84105, Israel. apelblat@bgu.ac.il\\
$^{2}$ Department of Mathematics, Universidad de Oviedo, \\
33007 Oviedo, Spain. gonzalezmarjuan@uniovi.es}
\maketitle

\begin{abstract}
First derivatives of the Whittaker function $\mathrm{M}_{\kappa ,\mu }\left(
x\right) $ with respect to the parameters are calculated. Using the
confluent hypergeometric function, these derivarives can be expressed as
infinite sums of quotients of the digamma and gamma functions. Also, it is
possible to obtain these parameter derivatives in terms of finite and
infinite integrals with integrands containing elementary functions (products
of algebraic, exponential and logarithmic functions) from the integral
representation of $\mathrm{M}_{\kappa ,\mu }\left( x\right) $. These
infinite sums and integrals can be expressed in closed-form for particular
values of the parameters. For this purpose, we have obtained the parameter
derivative of the incomplete gamma function in closed-form. As an
application, reduction formulas for parameter derivatives of the confluent
hypergeometric function has been derived, as well as some finite and
infinite integrals containing products of algebraic, exponential,
logarithmic and Bessel functions. Finally, some reduction formulas for the
Whittaker functions $\mathrm{M}_{\kappa ,\mu }\left( x\right) $ and integral
Whittaker functions $\mathrm{Mi}_{\kappa ,\mu }\left( x\right) $ and $%
\mathrm{mi}_{\kappa ,\mu }\left( x\right) $ are calculated.
\end{abstract}

\textbf{Keywords}:\ Derivatives with respect to parameters; Whittaker
functions; integral Whittaker functions; incomplete gamma functions; sums of
infinite series of psi and gamma; finite and infinite logarithmic integrals
and Bessel functions.

\textbf{AMS\ Subject Classification}:\ 33B15, 33B20, 33C10, 33C15, 33C20,
33C50, 33E20.

\section{Introduction}

Introduced in 1903 by Whittaker \cite{whittaker1903expression}, the $\mathrm{%
M}_{\kappa ,\mu }\left( x\right) $ and $\mathrm{W}_{\kappa ,\mu }\left(
x\right) $ functions are defined as:\
\begin{eqnarray}
\mathrm{M}_{\kappa ,\mu }\left( x\right) &=&x^{\mu
+1/2}e^{-x/2}\,_{1}F_{1}\left( \left.
\begin{array}{c}
\frac{1}{2}+\mu -\kappa \\
1+2\mu%
\end{array}%
\right\vert x\right)  \label{M_k,mu_def} \\
2\mu &\neq &-1,-2,\ldots  \notag
\end{eqnarray}%
and%
\begin{equation}
\mathrm{W}_{\kappa ,\mu }\left( x\right) =\frac{\Gamma \left( -2\mu \right)
}{\Gamma \left( \frac{1}{2}-\mu -\kappa \right) }\mathrm{M}_{\kappa ,\mu
}\left( x\right) +\frac{\Gamma \left( 2\mu \right) }{\Gamma \left( \frac{1}{2%
}+\mu -\kappa \right) }\mathrm{M}_{\kappa ,-\mu }\left( x\right) ,
\label{W_k,mu_def}
\end{equation}%
where $\Gamma \left( x\right) $ denotes the \textit{gamma function}. These
functions, called Whittaker functions, are closely associated to the
following \textit{confluent hypergeometric function} (Kummer function):%
\begin{equation}
_{1}F_{1}\left( \left.
\begin{array}{c}
a \\
b%
\end{array}%
\right\vert x\right) =\frac{\Gamma \left( b\right) }{\Gamma \left( a\right) }%
\sum_{n=0}^{\infty }\frac{\Gamma \left( a+n\right) }{\Gamma \left(
b+n\right) }\frac{x^{n}}{n!},  \label{1F1_Whittaker_def}
\end{equation}%
where $_{p}F_{q}\left( \left.
\begin{array}{c}
a_{1},\ldots ,a_{p} \\
b_{1},\ldots ,b_{q}%
\end{array}%
\right\vert x\right) $ denotes the \textit{generalized hypergeometric
function}.

For particular values of the parameters $\kappa $ and $\mu $, the Whittaker
functions can be reduced to a variety of elementary and special functions.
Whittaker \cite{whittaker1903expression}\ discussed the connection of the
functions defined in (\ref{M_k,mu_def}) and (\ref{W_k,mu_def})\ with many
other special functions, such as the modified Bessel function, the
incomplete gamma functions, the parabolic cylinder function, the error
functions, the logarithmic and the cosine integrals, and the generalize
Hermite and Laguerre polynomials. Monographs and treatises dealing with
special functions \cite%
{erdelyi1953bateman,slater1960confluent,whittaker1920course,olver2010nist,magnus2013formulas,buchholz1969confluent,gradstein2015table,prudnikov1986integrals,oldham2009atlas}%
\ present the properties of the Whittaker functions with more or less
extension.

The Whittaker functions are frequently applied in various areas of
mathematical physics (see, for example \cite%
{chaudhry1992remarks,rusin2011green,omair2022family}), such as the
well-known solution of the Schr\"{o}dinger equation for the harmonic
oscillator \cite{lagarias2009schrodinger}.

$\mathrm{M}_{\kappa ,\mu }\left( x\right) $ and $\mathrm{W}_{\kappa ,\mu
}\left( x\right) $ are usually treated as functions of variable $x$ with
fixed values of the parameters $\kappa $ and $\mu $. However, there are few
investigations which consider $\kappa $ and $\mu $ as variables. For
instance, Laurenzi \cite{laurenzi1973derivatives}\ discussed methods to
calculate derivatives of $\mathrm{M}_{\kappa ,1/2}\left( x\right) $ and $%
\mathrm{W}_{\kappa ,1/2}\left( x\right) $ with respect to $\kappa $ when
this parameter is an integer. Using the Mellin transform, Buschman \cite%
{buschman1974finite}\ showed that the derivatives of certain Whittaker
functions with respect to the parameters can be expressed in finite sums of
Whittaker functions. L\'{o}pez and Sesma \cite{lopez1999whittaker}
considered the behaviour of $\mathrm{M}_{\kappa ,\mu }\left( x\right) $ as a
function of $\kappa $. They derived a convergent expansion in ascending
powers of $\kappa $, and an asymptotic expansion in descending powers of $%
\kappa $. Using series of Bessel functions and Buchholz polynomials, Abad
and Sesma \cite{abad2003successive} presented an algorithm for the
calculation of the $n$-th derivative of the Whittaker functions with respect
to parameter $\kappa $. Becker \cite{becker2009infinite}\ investigated
certain integrals with respect to parameter $\mu $. Ancarini and Gasaneo
\cite{ancarani2010derivatives} presented a general case of differentiation
of generalized hypergeometric functions with respect to the parameters in
terms of infinite series containing the digamma function. In addition,
Sofostasios and Brychkov \cite{sofotasios2018derivatives} considered
derivatives of hypergeometric functions and classical polynomials with
respect to the parameters.

In this paper, our main focus will be directed to the systematic
investigation of the first derivatives of $\mathrm{M}_{\kappa ,\mu }\left(
x\right) $ with respect to the parameters. We will mainly base our results
on two different approaches. The first one has to do with the series
representation of $\mathrm{M}_{\kappa ,\mu }\left( x\right) $, and the
second one has to do with the integral representations of $\mathrm{M}%
_{\kappa ,\mu }\left( x\right) $. Regarding the first approach, direct
differentiation of (\ref{M_k,mu_def}) with respect to the parameters leads
to infinite sums of quotients of digamma and gamma functions. It is possible
to calculate such sums in closed-form for particular values of the
parameters. The parameter differentiation of the integral representations of
$\mathrm{M}_{\kappa ,\mu }\left( x\right) $ leads to finite and infinite
integrals of elementary functions, such as products of algebraic functions,
exponential and logarithmic functions. These integrals are similar to those
investigated by K\"{o}lbig \cite{kolbig1987integral}\ and Geddes et al. \cite%
{geddes1990evaluation}. As in the case of the first approach, it is possible
to calculate such integrals in closed-form for some particular values of the
parameters.

In the Appendices, we calculate the first derivative of the incomplete gamma
functions $\gamma \left( \nu ,x\right) $ and $\Gamma \left( \nu ,x\right) $
with respect to the parameter $\nu $. These results will be used when we
calculate some of the integrals found in the second approach mentioned
before. Also, we calculate some new reduction formulas of the integral
Whittaker functions, which were recently introduced by us in \cite%
{apelblat2021integral}. They are defined in a similar way as other integral
functions in the mathematical literature:
\begin{eqnarray}
\mathrm{Mi}_{\kappa ,\mu }\left( x\right) &=&\int_{0}^{x}\frac{\mathrm{M}%
_{\kappa ,\mu }\left( t\right) }{t}dt,  \label{Mi_def} \\
\mathrm{mi}_{\kappa ,\mu }\left( x\right) &=&\int_{x}^{\infty }\frac{\mathrm{%
M}_{\kappa ,\mu }\left( t\right) }{t}dt.  \label{mi_def}
\end{eqnarray}

Finally, we also include a list of reduction formulas for the Whittaker
function $\mathrm{M}_{\kappa ,\mu }\left( x\right) $ in the Appendices.

\section{Parameter differentiation of $\mathrm{M}_{\protect\kappa ,\protect%
\mu }$ via Kummer function $_{1}F_{1}$}

As mentioned before, the Whittaker function $\mathrm{M}_{\kappa ,\mu }\left(
x\right) $ is closely related to the confluent hypergeometric function $%
_{1}F_{1}\left( a;b;x\right) $. Likewise, the parameter derivatives of $%
\mathrm{M}_{\kappa ,\mu }\left( x\right) $ are also related to the parameter
derivatives of $_{1}F_{1}\left( a;b;x\right) $. Let us introduce the
following notation set by Ancarini and Gasaneo \cite{ancarani2010derivatives}%
.

\begin{definition}
Define the parameter derivatives of the confluent hypergeometric function
as,
\begin{equation}
G^{\left( 1\right) }\left( \left.
\begin{array}{c}
a \\
b%
\end{array}%
\right\vert x\right) =\frac{\partial }{\partial a}\,\left[ _{1}F_{1}\left(
\left.
\begin{array}{c}
a \\
b%
\end{array}%
\right\vert x\right) \right] ,  \label{G(1)_def}
\end{equation}%
and%
\begin{equation}
H^{\left( 1\right) }\left( \left.
\begin{array}{c}
a \\
b%
\end{array}%
\right\vert x\right) =\frac{\partial }{\partial b}\,\left[ _{1}F_{1}\left(
\left.
\begin{array}{c}
a \\
b%
\end{array}%
\right\vert x\right) \right] .  \label{H(1)_def}
\end{equation}
\end{definition}

According to (\ref{1F1_Whittaker_def}), we have%
\begin{eqnarray*}
G^{\left( 1\right) }\left( \left.
\begin{array}{c}
a \\
b%
\end{array}%
\right\vert x\right) &=&\frac{\Gamma \left( b\right) }{\Gamma \left(
a\right) }\sum_{n=0}^{\infty }\frac{\Gamma \left( a+n\right) }{\Gamma \left(
b+n\right) }\left[ \psi \left( a+n\right) -\psi \left( a\right) \right]
\frac{x^{n}}{n!}, \\
H^{\left( 1\right) }\left( \left.
\begin{array}{c}
a \\
b%
\end{array}%
\right\vert x\right) &=&-\frac{\Gamma \left( b\right) }{\Gamma \left(
a\right) }\sum_{n=0}^{\infty }\frac{\Gamma \left( a+n\right) }{\Gamma \left(
b+n\right) }\left[ \psi \left( b+n\right) -\psi \left( b\right) \right]
\frac{x^{n}}{n!}.
\end{eqnarray*}

Since one of the integral representations of the confluent hypergeometric
function is \cite[Sect. 6.5.1]{magnus2013formulas}:%
\begin{eqnarray}
&&_{1}F_{1}\left( \left.
\begin{array}{c}
a \\
b%
\end{array}%
\right\vert x\right) =\frac{\Gamma \left( b\right) }{\Gamma \left( a\right)
\Gamma \left( b-a\right) }\int_{0}^{1}e^{xt}t^{a-1}\left( 1-t\right)
^{b-a-1}dt  \label{1F1_integral_representation} \\
&&\mathrm{Re}\,b>\mathrm{Re}\,a>0,  \notag
\end{eqnarray}%
by direct differentiation of (\ref{1F1_integral_representation}) with
respect to parameters $a$ and $b$, we obtain
\begin{eqnarray*}
G^{\left( 1\right) }\left( \left.
\begin{array}{c}
a \\
b%
\end{array}%
\right\vert x\right) &=&\left[ \psi \left( b\right) -\psi \left( a\right) %
\right] \,_{1}F_{1}\left( \left.
\begin{array}{c}
a \\
b%
\end{array}%
\right\vert x\right) \\
&&+\frac{\Gamma \left( b\right) }{\Gamma \left( a\right) \Gamma \left(
b-a\right) }\int_{0}^{1}e^{xt}t^{a-1}\left( 1-t\right) ^{b-a-1}\ln \left(
\frac{t}{1-t}\right) dt,
\end{eqnarray*}%
and%
\begin{eqnarray*}
H^{\left( 1\right) }\left( \left.
\begin{array}{c}
a \\
b%
\end{array}%
\right\vert x\right) &=&-\left[ \psi \left( b\right) -\psi \left( b-a\right) %
\right] \,_{1}F_{1}\left( \left.
\begin{array}{c}
a \\
b%
\end{array}%
\right\vert x\right) \\
&&+\frac{\Gamma \left( b\right) }{\Gamma \left( a\right) \Gamma \left(
b-a\right) }\int_{0}^{1}e^{xt}t^{a-1}\left( 1-t\right) ^{b-a-1}\ln \left(
1-t\right) dt.
\end{eqnarray*}

Since the main focus is the systematic investigation of the parameter
derivatives of $\mathrm{M}_{\kappa ,\mu }\left( x\right) $, we will present
these parameter derivatives as Theorems along the paper, and the
corresponding results for $G^{\left( 1\right) }\left( a;b;x\right) $ and $%
H^{\left( 1\right) }\left( a;b;x\right) $ as Corollaries. Also, note that
all the results regarding $G^{\left( 1\right) }\left( a;b;x\right) $ can be
transformed according to the next Theorem.

\begin{theorem}
The following transformation holds true:%
\begin{equation*}
G^{\left( 1\right) }\left( \left.
\begin{array}{c}
a \\
b%
\end{array}%
\right\vert x\right) =-e^{x}G^{\left( 1\right) }\left( \left.
\begin{array}{c}
b-a \\
b%
\end{array}%
\right\vert -x\right) .
\end{equation*}
\end{theorem}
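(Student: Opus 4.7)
The plan is to derive the identity as an immediate consequence of Kummer's first transformation for the confluent hypergeometric function, namely
\begin{equation*}
{}_1F_1\!\left(\left.\begin{array}{c} a \\ b \end{array}\right|x\right) = e^{x}\,{}_1F_1\!\left(\left.\begin{array}{c} b-a \\ b \end{array}\right|-x\right),
\end{equation*}
which holds as an identity of entire functions of $a$ (with $b$ and $x$ fixed, $b$ not a non-positive integer). Since both sides are analytic in $a$, we may differentiate in $a$ termwise.

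First, I would differentiate the left-hand side with respect to $a$, which by definition \eqref{G(1)_def} yields $G^{(1)}(a;b;x)$. Next, I would differentiate the right-hand side with respect to $a$. The factor $e^{x}$ is independent of $a$, so only the confluent hypergeometric factor contributes. Writing $a' = b-a$ and applying the chain rule gives
\begin{equation*}
\frac{\partial}{\partial a}\,{}_1F_1\!\left(\left.\begin{array}{c} b-a \\ b \end{array}\right|-x\right) = -\left.\frac{\partial}{\partial a'}\,{}_1F_1\!\left(\left.\begin{array}{c} a' \\ b \end{array}\right|-x\right)\right|_{a'=b-a} = -G^{(1)}\!\left(\left.\begin{array}{c} b-a \\ b \end{array}\right|-x\right).
\end{equation*}
Equating the two differentiated expressions yields the stated identity.

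There is essentially no obstacle here beyond invoking Kummer's transformation and using the chain rule on the upper parameter; the only small care needed is to note that both sides of Kummer's transformation are entire in $a$ for $b$ fixed (not a non-positive integer), which legitimizes termwise differentiation in $a$. Alternatively, one could prove the identity by differentiating the series definition \eqref{1F1_Whittaker_def} term-by-term and rearranging using the reflection $\psi(a+n) - \psi(a)$ symmetry after applying Kummer's relation to each coefficient, but the Kummer-transform route is cleaner and preferable.
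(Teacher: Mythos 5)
Your proposal is correct and follows exactly the paper's route: differentiate Kummer's transformation $_{1}F_{1}(a;b;x)=e^{x}\,_{1}F_{1}(b-a;b;-x)$ with respect to $a$, with the chain rule on the upper parameter producing the minus sign. The extra remark on analyticity in $a$ is a welcome (if minor) justification that the paper leaves implicit.
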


\begin{proof}
Differentiate with respect to $a$ Kummer's transformation formula \cite[Eqn.
13.2.39]{olver2010nist}:%
\begin{equation*}
_{1}F_{1}\left( \left.
\begin{array}{c}
a \\
b%
\end{array}%
\right\vert x\right) =e^{x}\,_{1}F_{1}\left( \left.
\begin{array}{c}
b-a \\
b%
\end{array}%
\right\vert -x\right) ,
\end{equation*}%
to obtain the desired result.
\end{proof}

\subsection{Derivative with respect to the first parameter $\partial \mathrm{%
M}_{\protect\kappa ,\protect\mu }\left( x\right) /\partial \protect\kappa $}

Using (\ref{M_k,mu_def})\ and (\ref{1F1_Whittaker_def}), the first
derivative of $\mathrm{M}_{\kappa ,\mu }\left( x\right) $ with respect to
the first parameter $\kappa $ is%
\begin{eqnarray}
\frac{\partial \mathrm{M}_{\kappa ,\mu }\left( x\right) }{\partial \kappa }
&=&\psi \left( \frac{1}{2}+\mu -\kappa \right) \mathrm{M}_{\kappa ,\mu
}\left( x\right)  \label{DkM_general} \\
&&-\frac{\Gamma \left( 1+2\mu \right) }{\Gamma \left( \frac{1}{2}+\mu
-\kappa \right) }x^{\mu +1/2}e^{-x/2}S_{1}\left( \kappa ,\mu ,x\right) ,
\notag
\end{eqnarray}%
where $\psi \left( x\right) $ denotes the \textit{digamma function} and%
\begin{equation}
S_{1}\left( \kappa ,\mu ,x\right) =\sum_{n=0}^{\infty }\frac{\Gamma \left(
\frac{1}{2}+\mu -\kappa +n\right) }{\Gamma \left( 1+2\mu +n\right) }\psi
\left( \frac{1}{2}+\mu -\kappa +n\right) \frac{x^{n}}{n!}.  \label{S1_def}
\end{equation}

\begin{theorem}
For $\mu \neq -1/2$ and for $x\in
\mathbb{R}
$, the following parameter derivative formula of $\mathrm{M}_{\kappa ,\mu
}\left( x\right) $ holds true:%
\begin{equation}
\left. \frac{\partial \mathrm{M}_{\kappa ,\mu }\left( x\right) }{\partial
\kappa }\right\vert _{\kappa =-\mu -1/2}=-\frac{x^{\mu +3/2}}{2\mu +1}%
e^{x/2}\,_{2}F_{2}\left( \left.
\begin{array}{c}
1,1 \\
2\left( \mu +1\right) ,2%
\end{array}%
\right\vert -x\right) .  \label{DkM_k=mu-1/2}
\end{equation}
\end{theorem}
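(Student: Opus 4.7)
The plan is to reduce everything to evaluating $G^{(1)}$ at special arguments, exploiting the Kummer transformation theorem just proved. First I would rewrite the parameter derivative (\ref{DkM_general}) in terms of $G^{(1)}$. Using the series in Definition (\ref{G(1)_def}) together with the identity $\psi(a)\,{}_1F_1(a;b;x) = \tfrac{\Gamma(b)}{\Gamma(a)}\sum_n \tfrac{\Gamma(a+n)}{\Gamma(b+n)} \psi(a) \tfrac{x^n}{n!}$, the $\psi(1/2+\mu-\kappa) M_{\kappa,\mu}(x)$ term in (\ref{DkM_general}) cancels the $\psi(a)$ contribution hidden inside $S_{1}$, leaving the compact form
\begin{equation*}
\frac{\partial M_{\kappa ,\mu }(x)}{\partial \kappa }=-x^{\mu +1/2}e^{-x/2}\,G^{(1)}\!\left(\left.\begin{array}{c} \tfrac{1}{2}+\mu -\kappa \\ 1+2\mu \end{array}\right| x\right).
\end{equation*}

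At $\kappa =-\mu -1/2$ the two parameters collide: setting $b=1+2\mu $, the right-hand side becomes $-x^{\mu +1/2}e^{-x/2}G^{(1)}(b;b;x)$. The key step is to apply Theorem on the transformation $G^{(1)}(a;b;x)=-e^{x}G^{(1)}(b-a;b;-x)$ with $a=b$, giving $G^{(1)}(b;b;x)=-e^{x}G^{(1)}(0;b;-x)$. This shifts the problem to computing $G^{(1)}$ at the first parameter equal to zero, which is much easier than handling the psi-series directly.

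Next I would compute $G^{(1)}(0;b;y)$ by term-by-term differentiation of the Pochhammer series ${}_{1}F_{1}(a;b;y)=\sum_{n\geq 0}(a)_{n}y^{n}/[(b)_{n}n!]$. Since $(a)_{n}=\prod_{j=0}^{n-1}(a+j)$ vanishes at $a=0$ for $n\geq 1$, only the summand with $i=0$ survives in $\partial _{a}\prod_{j}(a+j)|_{a=0}$, giving $\partial _{a}(a)_{n}|_{a=0}=(n-1)!$ for $n\geq 1$. Hence
\begin{equation*}
G^{(1)}(0;b;y)=\sum_{n=1}^{\infty }\frac{y^{n}}{n\,(b)_{n}}.
\end{equation*}

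Finally, re-indexing $n\mapsto n+1$ and using $(b)_{n+1}=b\,(b+1)_{n}$ along with $(2)_{n}=(n+1)!$ and $(1)_{n}=n!$ identifies the remaining sum as $(y/b)\,{}_{2}F_{2}(1,1;b+1,2;y)$. Substituting $y=-x$ and $b=1+2\mu $ and combining with the prefactor $-x^{\mu +1/2}e^{-x/2}$ produces exactly the claimed right-hand side of (\ref{DkM_k=mu-1/2}). No serious obstacle is expected; the only tricky bookkeeping is the cancellation of the $\psi (a)$ term and the recognition of the ${}_{2}F_{2}$ after re-indexing.
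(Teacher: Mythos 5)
Your proof is correct, but it takes a genuinely different route from the paper's. The paper specializes the series identity (\ref{DkM_general}) directly at $\kappa=-\mu-1/2$, where $S_{1}$ collapses to $\sum_{n}\psi\left( 1+2\mu+n\right) x^{n}/n!$, and then invokes the cited handbook summation formula (\ref{Q_0(t,a)}) to produce the $_{2}F_{2}$. You instead reduce everything to $G^{\left( 1\right) }$: you first establish $\partial \mathrm{M}_{\kappa,\mu}/\partial\kappa=-x^{\mu+1/2}e^{-x/2}G^{\left( 1\right) }\left( \tfrac{1}{2}+\mu-\kappa;1+2\mu;x\right) $ (which is the paper's (\ref{DkM_G(1)}), used there only to deduce the Corollary), observe that at $\kappa=-\mu-1/2$ the parameters collide to $a=b=1+2\mu$, apply the Kummer-transformation theorem to trade $G^{\left( 1\right) }\left( b;b;x\right) $ for $-e^{x}G^{\left( 1\right) }\left( 0;b;-x\right) $, and evaluate the latter elementarily via $\left. \partial_{a}\left( a\right) _{n}\right\vert _{a=0}=\left( n-1\right) !$. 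All the individual steps check out: the cancellation of the $\psi\left( a\right) $ term against the one hidden in $S_{1}$, the survival of only the $i=0$ term in the product-rule expansion of $\left( a\right) _{n}$ at $a=0$, and the re-indexing $\left( b\right) _{m+1}=b\left( b+1\right) _{m}$, $1/\left( m+1\right) =\left( 1\right) _{m}/\left( 2\right) _{m}$ that identifies the $_{2}F_{2}$; the hypothesis $\mu\neq-1/2$ is exactly what guarantees $b\neq0$ in your final division. What your route buys is self-containment --- you re-derive the content of (\ref{Q_0(t,a)}) rather than citing it --- and it effectively proves the Corollary (\ref{G(1)_a_resultado}) first and deduces the Theorem from it, reversing the paper's logical order; the paper's route is shorter on the page only because the key summation is outsourced to the reference.
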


\begin{proof}
For $\kappa =-\mu -1/2$, Eqn. (\ref{DkM_general})\ becomes%
\begin{eqnarray*}
&&\left. \frac{\partial \mathrm{M}_{\kappa ,\mu }\left( x\right) }{\partial
\kappa }\right\vert _{\kappa =-\mu -1/2} \\
&=&x^{\mu +1/2}e^{-x/2}\left[ \psi \left( 1+2\mu \right) \sum_{n=0}^{\infty }%
\frac{x^{n}}{n!}-\sum_{n=0}^{\infty }\psi \left( 2\mu +1+n\right) \frac{x^{n}%
}{n!}\right] .
\end{eqnarray*}%
Apply \cite[Eqn. 6.2.1(60)]{brychkov2008handbook}
\begin{equation}
\sum_{k=0}^{\infty }\frac{t^{k}}{k!}\psi \left( k+a\right) =e^{t}\left[ \psi
\left( a\right) +\frac{t}{a}\,_{2}F_{2}\left( \left.
\begin{array}{c}
1,1 \\
a+1,2%
\end{array}%
\right\vert -t\right) \right] .  \label{Q_0(t,a)}
\end{equation}%
to obtain (\ref{DkM_k=mu-1/2}), as we wanted to prove.
\end{proof}

\begin{corollary}
For $a\in
\mathbb{R}
$, $a\neq 0$, and for $x\in
\mathbb{R}
$, the following reduction formula holds true:%
\begin{equation}
G^{\left( 1\right) }\left( \left.
\begin{array}{c}
a \\
a%
\end{array}%
\right\vert x\right) =\frac{x\,e^{x}}{a}\,_{2}F_{2}\left( \left.
\begin{array}{c}
1,1 \\
a+1,2%
\end{array}%
\right\vert -x\right) .  \label{G(1)_a_resultado}
\end{equation}
\end{corollary}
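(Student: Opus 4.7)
The plan is to derive this as a direct specialization of the preceding theorem. Differentiating the Whittaker representation (\ref{M_k,mu_def}) with respect to $\kappa$ (the prefactor $x^{\mu+1/2}e^{-x/2}$ is $\kappa$-independent) and using $\partial(\tfrac{1}{2}+\mu-\kappa)/\partial\kappa=-1$ gives
\begin{equation*}
\frac{\partial \mathrm{M}_{\kappa,\mu}(x)}{\partial\kappa}=-\,x^{\mu+1/2}\,e^{-x/2}\,G^{(1)}\!\left(\left.\begin{array}{c}\tfrac{1}{2}+\mu-\kappa \\ 1+2\mu\end{array}\right\vert x\right).
\end{equation*}
At the distinguished value $\kappa=-\mu-\tfrac{1}{2}$ the first argument of $G^{(1)}$ collapses onto the second (both equal $1+2\mu$), which is exactly the degenerate configuration $b=a$ appearing in the corollary.

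I would then set $a:=1+2\mu$, so that $\mu=(a-1)/2$, $2(\mu+1)=a+1$, and $x^{\mu+1/2}=x^{a/2}$. Equating the display above with the theorem's closed form (\ref{DkM_k=mu-1/2}) and cancelling the common factor $-x^{a/2}e^{-x/2}$ immediately produces the claimed identity (\ref{G(1)_a_resultado}); the theorem's hypothesis $\mu\neq -\tfrac{1}{2}$ translates into the corollary's $a\neq 0$.

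No serious obstacle is expected: the derivation is essentially algebraic bookkeeping. The only delicate points are verifying that the power discrepancy $x^{(a+2)/2}/x^{a/2}=x$ matches the linear factor of $x$ on the right-hand side of (\ref{G(1)_a_resultado}), and that the exponential factors combine as $e^{x/2}/e^{-x/2}=e^x$. As a sanity check, one could alternatively set $b=a$ directly in the series representation of $G^{(1)}$ displayed just after its definition, so that the gamma quotients telescope to unity and formula (\ref{Q_0(t,a)}) applied to $\sum_{n\geq 0}\psi(a+n)\,x^n/n!$ reproduces the same ${}_2F_2$; this confirms consistency, as one expects since (\ref{Q_0(t,a)}) was itself the engine behind the theorem.
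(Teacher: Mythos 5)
Your proposal is correct and follows exactly the paper's own route: it establishes the identity $\partial \mathrm{M}_{\kappa,\mu}/\partial\kappa = -x^{\mu+1/2}e^{-x/2}\,G^{(1)}\bigl(\tfrac12+\mu-\kappa;\,1+2\mu\mid x\bigr)$ by direct differentiation of (\ref{M_k,mu_def}), specializes to $\kappa=-\mu-\tfrac12$, and compares with (\ref{DkM_k=mu-1/2}) under the substitution $a=1+2\mu$. The algebraic bookkeeping you verify (the factors $x$ and $e^{x}$, and $\mu\neq-\tfrac12$ becoming $a\neq 0$) is exactly what the paper leaves implicit.
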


\begin{proof}
Direct differentiation of (\ref{M_k,mu_def})\ yields%
\begin{equation}
\frac{\partial \mathrm{M}_{\kappa ,\mu }\left( x\right) }{\partial \kappa }%
=-x^{\mu +1/2}e^{-x/2}G^{\left( 1\right) }\left( \left.
\begin{array}{c}
\frac{1}{2}+\mu -\kappa \\
1+2\mu%
\end{array}%
\right\vert x\right) ,  \label{DkM_G(1)}
\end{equation}%
thus comparing (\ref{DkM_G(1)})\ with $\kappa =-\mu -\frac{1}{2}$ to (\ref%
{DkM_k=mu-1/2}), we arrive at (\ref{G(1)_a_resultado}), as we wanted to
prove.
\end{proof}

Table \ref{Table_1}\ presents some explicit expressions for particular
values of (\ref{DkM_k=mu-1/2}), and for $x\in
\mathbb{R}
$, obtained with the help of MATHEMATICA\ program.

\begin{center}
\begin{table}[tbp] \centering%
\caption{Derivative of $M_{\kappa,\mu}$ with respect
to $\kappa$ by using (\ref{DkM_k=mu-1/2}).}%
\begin{tabular}{|c|c|c|}
\hline
$\kappa $ & $\mu $ & $\frac{\partial \mathrm{M}_{\kappa ,\mu }\left(
x\right) }{\partial \kappa }$ \\ \hline\hline
$-\frac{3}{4}$ & $\frac{1}{4}$ & $-\frac{2}{3}x^{7/4}e^{x/2}\,_{2}F_{2}%
\left( 1,1;\frac{5}{2},2;-x\right) $ \\ \hline
$-\frac{1}{2}$ & $0$ & $-\sqrt{x}e^{x/2}\left[ \gamma +\ln x+\mathrm{Shi}%
\left( x\right) -\mathrm{Chi}\left( x\right) \right] $ \\ \hline
$-\frac{1}{4}$ & $-\frac{1}{4}$ & $-2x^{5/4}e^{x/2}\,_{2}F_{2}\left( 1,1;%
\frac{3}{2},2;-x\right) $ \\ \hline
$-\frac{1}{6}$ & $-\frac{1}{3}$ & $-3x^{7/6}e^{x/2}\,_{2}F_{2}\left( 1,1;%
\frac{4}{3},2;-x\right) $ \\ \hline
$0$ & $\frac{1}{2}$ & $e^{-x/2}\left[ \mathrm{Shi}\left( x\right) +\mathrm{%
Chi}\left( x\right) -\ln x-\gamma \right] -e^{x/2}\left[ \mathrm{Shi}\left(
x\right) -\mathrm{Chi}\left( x\right) +\ln x+\gamma \right] $ \\ \hline
$\frac{1}{6}$ & $-\frac{2}{3}$ & $3x^{5/6}e^{x/2}\,_{2}F_{2}\left( 1,1;\frac{%
2}{3},2;-x\right) $ \\ \hline
$\frac{1}{2}$ & $1$ & $%
\begin{array}{l}
-\frac{2}{\sqrt{x}}\left\{ e^{x/2}\left[ \gamma +1+\ln x+\mathrm{Shi}\left(
x\right) -\mathrm{Chi}\left( x\right) \right] \right. \\
\quad +\left. e^{-x/2}\left( x+1\right) \left[ \gamma -1+\ln x-\mathrm{Shi}%
\left( x\right) -\mathrm{Chi}\left( x\right) \right] \right\}%
\end{array}%
$ \\ \hline
$1$ & $\frac{3}{2}$ & $%
\begin{array}{l}
-\frac{3}{x}\left\{ e^{-x/2}\left[ \left( x^{2}+2x+2\right) \left( \ln x-%
\mathrm{Shi}\left( x\right) -\mathrm{Chi}\left( x\right) +\gamma \right) %
\right] \right. \\
\quad +\left. e^{x/2}\left[ 2\ln x+2\,\mathrm{Shi}\left( x\right) -2\,%
\mathrm{Chi}\left( x\right) +x+2\gamma +3\right] \right\}%
\end{array}%
$ \\ \hline
\end{tabular}%
\label{Table_1}%
\end{table}%
\end{center}

Next, we present other reduction formula of $\partial \mathrm{M}_{\kappa
,\mu }\left( x\right) /\partial \kappa $ from the result found in \cite%
{laurenzi1973derivatives} for $x\in
\mathbb{R}
$,%
\begin{eqnarray}
&&\left. \frac{\partial \mathrm{M}_{\kappa ,\mu }\left( x\right) }{\partial
\kappa }\right\vert _{\kappa =n,\mu =1/2}  \label{Laurenzi_M} \\
&=&\left[ \ln \left\vert x\right\vert -\psi \left( n+1\right) -\mathrm{Ei}%
\left( x\right) \right] \,\mathrm{M}_{n,1/2}\left( x\right) +\sum_{\ell
=0}^{n-1}\left( a_{\ell }+b_{\ell }\,e^{x}\right) \,\mathrm{M}_{\ell
,1/2}\left( x\right) ,  \notag
\end{eqnarray}%
where $\mathrm{Ei}\left( x\right) $ denotes the exponential integral and for
$n,\ell =1,2,\ldots $%
\begin{equation}
a_{\ell }=\frac{1}{n}\left( \frac{n+\ell }{n-\ell }\right) ,  \label{a_l}
\end{equation}%
and%
\begin{equation}
b_{\ell }=\left\{
\begin{array}{ll}
\displaystyle%
\frac{1}{n}\sum_{k=0}^{n-\ell -1}\frac{\left( \ell \right) _{k}2^{k}}{\left(
\ell +n\right) _{k}}, & \ell =1,2,\ldots \\
0, & \ell =0.%
\end{array}%
\right.  \label{b_l}
\end{equation}

In order to calculate the finite sum given in (\ref{b_l}), we derive the
following Lemma.

\begin{lemma}
The following finite sum holds true $\forall n,\ell =1,2,\ldots $%
\begin{equation}
S\left( n,\ell \right) =\sum_{k=0}^{n-\ell -1}\frac{\left( \ell \right)
_{k}2^{k}}{\left( \ell +n\right) _{k}}=\mathrm{Re}\left[ _{2}F_{1}\left(
\left.
\begin{array}{c}
1,\ell \\
\ell +n%
\end{array}%
\right\vert 2\right) \right] .  \label{S_n,l}
\end{equation}
\end{lemma}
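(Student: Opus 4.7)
The plan is to view $S(n,\ell)$ as the truncation, at index $k=n-\ell$, of the power series defining $_{2}F_{1}(1,\ell;\ell+n;z)$ at $z=2$, and to show that the remaining tail contributes a purely imaginary value under the standard branch prescription. Since $(1)_{k}/k!=1$, the Gauss series reads
\begin{equation*}
_{2}F_{1}\left( \left.
\begin{array}{c}
1,\ell \\
\ell +n
\end{array}
\right\vert z\right) =\sum_{k=0}^{\infty }\frac{(\ell)_{k}}{(\ell +n)_{k}}z^{k},\qquad |z|<1,
\end{equation*}
extended by analytic continuation to $\mathbb{C}\setminus [1,\infty )$.

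The first step is to split this series at $k=n-\ell$ and apply the Pochhammer identity $(a)_{(n-\ell)+m}=(a)_{n-\ell}(a+n-\ell)_{m}$ in the tail (with $a=\ell$ in the numerator and $a=\ell+n$ in the denominator). The shifted parameters collapse to $(n)_{m}/(2n)_{m}$, and after extracting the overall factor $(\ell)_{n-\ell}/(\ell+n)_{n-\ell}=\Gamma(n)\Gamma(\ell+n)/[\Gamma(\ell)\Gamma(2n)]$ I obtain
\begin{equation*}
_{2}F_{1}\left( \left.
\begin{array}{c}
1,\ell \\
\ell +n
\end{array}
\right\vert z\right) =\sum_{k=0}^{n-\ell -1}\frac{(\ell)_{k}}{(\ell +n)_{k}}z^{k}+\frac{\Gamma(n)\,\Gamma(\ell +n)}{\Gamma(\ell)\,\Gamma(2n)}\,z^{n-\ell}\,_{2}F_{1}\left( \left.
\begin{array}{c}
1,n \\
2n
\end{array}
\right\vert z\right) .
\end{equation*}
The head is a polynomial with real coefficients that evaluates at $z=2$ to exactly $S(n,\ell)$.

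The second step is to show $\mathrm{Re}\left[ _{2}F_{1}(1,n;2n;2)\right] =0$. Via Euler's integral representation,
\begin{equation*}
_{2}F_{1}\left( \left.
\begin{array}{c}
1,n \\
2n
\end{array}
\right\vert z\right) =\frac{\Gamma(2n)}{\Gamma(n)^{2}}\int_{0}^{1}\frac{t^{n-1}(1-t)^{n-1}}{1-zt}\,dt,
\end{equation*}
which at $z=2$ must be read as a Cauchy principal value since the denominator vanishes at the interior point $t=1/2$. The substitution $t\mapsto 1-t$ preserves the symmetric weight $t^{n-1}(1-t)^{n-1}$ and maps $1-2t\mapsto -(1-2t)$, so the integrand is odd about $t=1/2$; hence the principal-value integral vanishes. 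Because the tail prefactor $[\Gamma(n)\Gamma(\ell+n)/\Gamma(\ell)\Gamma(2n)]\,2^{n-\ell}$ is real, the tail contributes no real part at $z=2$, and taking real parts in the split identity yields the claimed equality.

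The main obstacle is the rigorous treatment of the boundary behavior on the branch cut $[1,\infty)$: one must justify that the split identity, derived for $|z|<1$, extends consistently as $z\to 2$ under a fixed branch convention, and that the singular Euler integral at $z=2$ agrees with this boundary value in the Plemelj--Sokhotski sense. A clean workaround is to keep $z=2\pm i\varepsilon$ throughout, take real parts first, and let $\varepsilon\downarrow 0$; the odd-symmetry argument then applies uniformly and the imaginary jump (which carries the $i\pi$ times the residue at $t=1/2$) drops out, producing the stated formula $S(n,\ell)=\mathrm{Re}[\,_{2}F_{1}(1,\ell;\ell+n;2)]$.
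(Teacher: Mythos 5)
Your proof is correct, and it follows the paper's own decomposition exactly: you split the Gauss series at $k=n-\ell$, identify the head with $S(n,\ell)$, and reduce the tail (after the Pochhammer shift) to a real multiple of ${}_{2}F_{1}(1,n;2n;2)$, exactly as the paper does with its $S_{1}-S_{2}$ splitting. Where you genuinely diverge is in the key sub-step of showing that this tail contributes no real part. The paper evaluates ${}_{2}F_{1}(1,n;2n;2)$ in closed form: it applies the quadratic transformation \cite[Eqn. 15.18.3]{olver2010nist} to land on ${}_{2}F_{1}(\frac{1}{2},n-\frac{1}{2};n+\frac{1}{2};1)$, sums that by Gauss's theorem, and reads off the explicit purely imaginary value $i\pi(2n-1)!!/(2^{n}(n-1)!)$. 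You instead use the Euler integral representation and observe that the integrand $t^{n-1}(1-t)^{n-1}/(1-2t)$ is odd about $t=1/2$, so the principal value vanishes. Your route is more elementary (no quadratic transformation needed) and, importantly, more honest about the branch-cut issue: the paper's step $(1-2)^{-1/2}=i$ silently fixes a branch of $(1-z)^{-a/2}$ on the cut $[1,\infty)$, whereas your $z=2\pm i\varepsilon$ limiting argument makes the Plemelj--Sokhotski splitting explicit and shows that the real part is branch-independent, which is all the lemma actually requires. What the paper's approach buys in exchange is the explicit value of the imaginary part, which is not needed for the statement but confirms that $S_{2}(n,\ell)$ is genuinely nonzero. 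Both arguments implicitly assume $n>\ell$ so that the truncated sum is nonempty; you might note that for $n\leq\ell$ the identity should be read with an empty head.
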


\begin{proof}
Split the sum in two as%
\begin{equation*}
S\left( n,\ell \right) =\underset{S_{1}\left( n,\ell \right) }{\underbrace{%
\sum_{k=0}^{\infty }\frac{\left( \ell \right) _{k}\left( 1\right) _{k}2^{k}}{%
k!\left( \ell +n\right) _{k}}}}-\underset{S_{2}\left( n,\ell \right) }{%
\underbrace{\sum_{k=n-\ell }^{\infty }\frac{\left( \ell \right) _{k}\left(
1\right) _{k}2^{k}}{k!\left( \ell +n\right) _{k}}}},
\end{equation*}%
where%
\begin{equation*}
S_{1}\left( n,\ell \right) =\,_{2}F_{1}\left( \left.
\begin{array}{c}
1,\ell \\
\ell +n%
\end{array}%
\right\vert 2\right) ,
\end{equation*}%
and%
\begin{eqnarray*}
S_{2}\left( n,\ell \right) &=&2^{n-\ell }\sum_{s=0}^{\infty }\frac{\left(
\ell \right) _{s+n-\ell }\left( 1\right) _{s}2^{s}}{s!\left( \ell +n\right)
_{s+n-\ell }} \\
&=&2^{n-\ell }\frac{\left( \ell \right) _{n}}{\left( n\right) _{n}}%
\sum_{s=0}^{\infty }\frac{\left( n\right) _{s}\left( 1\right) _{s}2^{s}}{%
s!\left( 2n\right) _{s}} \\
&=&2^{n-\ell }\frac{\left( \ell \right) _{n}}{\left( n\right) _{n}}%
\,_{2}F_{1}\left( \left.
\begin{array}{c}
1,n \\
2n%
\end{array}%
\right\vert 2\right) .
\end{eqnarray*}%
Take $a=1$, $b=n$, and $z=2$ in the quadratic transformation \cite[Eqn.
15.18.3]{olver2010nist}%
\begin{eqnarray*}
&&_{2}F_{1}\left( \left.
\begin{array}{c}
a,b \\
2b%
\end{array}%
\right\vert z\right) \\
&=&\left( 1-z\right) ^{-a/2}\,_{2}F_{1}\left( \left.
\begin{array}{c}
\frac{a}{2},b-\frac{a}{2} \\
b+\frac{1}{2}%
\end{array}%
\right\vert \frac{z^{2}}{4\left( z-1\right) }\right) ,
\end{eqnarray*}%
to obtain%
\begin{equation*}
_{2}F_{1}\left( \left.
\begin{array}{c}
1,n \\
2n%
\end{array}%
\right\vert 2\right) =i\,_{2}F_{1}\left( \left.
\begin{array}{c}
\frac{1}{2},n-\frac{1}{2} \\
n+\frac{1}{2}%
\end{array}%
\right\vert 1\right) .
\end{equation*}%
Now, apply Gauss's summation theorem \cite[Eqn. 15.4.20]{olver2010nist}\
\begin{eqnarray*}
_{2}F_{1}\left( \left.
\begin{array}{c}
a,b \\
c%
\end{array}%
\right\vert 1\right) &=&\frac{\Gamma \left( c\right) \Gamma \left(
c-a-b\right) }{\Gamma \left( c-a\right) \Gamma \left( c-b\right) }, \\
\mathrm{Re}\,\left( c-a-b\right) &>&0,
\end{eqnarray*}%
and the formula \cite[Eqn. 43:4:3]{oldham2009atlas}
\begin{equation*}
\Gamma \left( n+\frac{1}{2}\right) =\frac{\left( 2n-1\right) !!}{2^{n}}\sqrt{%
\pi },
\end{equation*}%
to arrive at%
\begin{equation*}
_{2}F_{1}\left( \left.
\begin{array}{c}
1,n \\
2n%
\end{array}%
\right\vert 2\right) =i\pi \,\frac{\left( 2n-1\right) !!}{2^{n}\left(
n-1\right) !}.
\end{equation*}%
Therefore, $S_{2}\left( n,\ell \right) $ is a pure imaginary number. Since $%
S\left( n,\ell \right) $ is a real number, we conclude that $S\left( n,\ell
\right) =\mathrm{Re}\left[ S_{1}\left( n,\ell \right) \right] $, as we
wanted to prove.
\end{proof}

\begin{theorem}
The following reduction formula holds true for $n=1,2,\ldots $ and $x\in
\mathbb{R}
$,
\begin{eqnarray}
&&\left. \frac{\partial \mathrm{M}_{\kappa ,\mu }\left( x\right) }{\partial
\kappa }\right\vert _{\kappa =n,\mu =1/2}  \label{DkMnmedio} \\
&=&\frac{2}{n}\sinh \left( \frac{x}{2}\right) +\frac{x\,e^{-x/2}}{n}  \notag
\\
&&\left\{ \left[ \ln \left\vert x\right\vert +\gamma -H_{n}-\mathrm{Ei}%
\left( x\right) \right] \,L_{n-1}^{\left( 1\right) }\left( x\right) \right.
\notag \\
&&+\left. \sum_{\ell =1}^{n-1}\left( \frac{n+\ell }{n-\ell }-e^{x}\,\mathrm{%
Re}\left[ _{2}F_{1}\left( \left.
\begin{array}{c}
1,\ell \\
\ell +n%
\end{array}%
\right\vert 2\right) \right] \right) \frac{L_{\ell -1}^{\left( 1\right)
}\left( x\right) }{\ell }\right\} ,  \notag
\end{eqnarray}%
where $L_{n}^{\left( \alpha \right) }\left( x\right) $ denotes the Laguerre
polynomials (\ref{Laguerre_def}) and $H_{n}=\sum_{k=1}^{n}\frac{1}{k}$ the $%
n $-th harmonic number.
\end{theorem}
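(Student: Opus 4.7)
The plan is to start from Laurenzi's formula (\ref{Laurenzi_M}) and convert everything on the right-hand side into the language of Laguerre polynomials, harmonic numbers, and the hypergeometric function $_{2}F_{1}(1,\ell;\ell+n;2)$ supplied by the preceding Lemma. The two main ingredients will be: (i) the Lemma, which lets me rewrite the finite sum $S(n,\ell)$ appearing in $b_{\ell}$ as $\mathrm{Re}\,[_{2}F_{1}(1,\ell;\ell+n;2)]$; and (ii) the standard reduction of $\mathrm{M}_{\ell,1/2}(x)$ to a Laguerre polynomial, namely the identity $\,_{1}F_{1}(1-\ell;2;x)=\tfrac{1}{\ell}L_{\ell-1}^{(1)}(x)$ (for $\ell\ge 1$), which combined with (\ref{M_k,mu_def}) at $\mu=1/2$ gives
\[
\mathrm{M}_{\ell,1/2}(x)=x\,e^{-x/2}\,_{1}F_{1}(1-\ell;2;x)=\frac{x\,e^{-x/2}}{\ell}\,L_{\ell-1}^{(1)}(x),\qquad \ell\ge 1.
\]

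First I would split the sum in (\ref{Laurenzi_M}) into the $\ell=0$ term and the tail $1\le\ell\le n-1$. For $\ell=0$ the identity $\,_{1}F_{1}(1;2;x)=(e^{x}-1)/x$ gives $\mathrm{M}_{0,1/2}(x)=e^{x/2}-e^{-x/2}=2\sinh(x/2)$, and since $b_{0}=0$ and $a_{0}=1/n$, this term contributes exactly $\frac{2}{n}\sinh(x/2)$, which accounts for the first summand of (\ref{DkMnmedio}). Next, I would replace $\psi(n+1)=-\gamma+H_{n}$ so that $\ln|x|-\psi(n+1)-\mathrm{Ei}(x)$ becomes $\ln|x|+\gamma-H_{n}-\mathrm{Ei}(x)$, and use the Laguerre representation above (applied with $\ell=n$) to write $\mathrm{M}_{n,1/2}(x)=\tfrac{x\,e^{-x/2}}{n}L_{n-1}^{(1)}(x)$; this produces the bracketed $[\ln|x|+\gamma-H_{n}-\mathrm{Ei}(x)]L_{n-1}^{(1)}(x)$ factor, with the common prefactor $x\,e^{-x/2}/n$ factored out.

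Finally, for each $\ell$ in $1,\dots,n-1$, I would substitute $a_{\ell}=\tfrac{1}{n}\cdot\tfrac{n+\ell}{n-\ell}$ and $b_{\ell}=\tfrac{1}{n}\,S(n,\ell)$, apply the Lemma to rewrite $S(n,\ell)$ as $\mathrm{Re}\,[_{2}F_{1}(1,\ell;\ell+n;2)]$, and replace $\mathrm{M}_{\ell,1/2}(x)$ by $\tfrac{x\,e^{-x/2}}{\ell}L_{\ell-1}^{(1)}(x)$. Pulling out the common prefactor $\tfrac{x\,e^{-x/2}}{n}$ yields precisely the finite sum in (\ref{DkMnmedio}) (with the sign as written there). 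The main obstacle is really bookkeeping: matching the factor $1/n$ coming from Laurenzi's coefficients against the global $1/n$ in (\ref{DkMnmedio}), handling $\ell=0$ separately because $b_{0}$ is defined piecewise, and being careful that the analytic continuation of $_{2}F_{1}(1,\ell;\ell+n;2)$ is taken with the branch choice used in the Lemma so that the real part genuinely equals the finite sum $S(n,\ell)$. Once those pieces are aligned, the claimed formula follows directly.
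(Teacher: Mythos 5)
Your proposal is correct and follows essentially the same route as the paper: both insert into Laurenzi's formula (\ref{Laurenzi_M}) the identification of $b_{\ell}$ supplied by the Lemma, together with $\mathrm{M}_{0,1/2}(x)=2\sinh(x/2)$, $\mathrm{M}_{\ell ,1/2}(x)=\frac{x\,e^{-x/2}}{\ell }L_{\ell -1}^{(1)}(x)$, and $\psi (n+1)=-\gamma +H_{n}$. The only cosmetic difference is that you derive the Laguerre representation of $\mathrm{M}_{\ell ,1/2}$ directly from the $_{1}F_{1}$--Laguerre identity, whereas the paper obtains it by specializing its earlier formula (\ref{M_k+n,k-1/2}).
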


\begin{proof}
From (\ref{S_n,l}) and (\ref{b_l}), we see that%
\begin{equation}
b_{\ell }=\mathrm{Re}\left[ _{2}F_{1}\left( \left.
\begin{array}{c}
1,\ell \\
\ell +n%
\end{array}%
\right\vert 2\right) \right] ,\quad \ell =1,2,\ldots  \label{b_l_Re}
\end{equation}%
Also, according to \cite[Eqn. 13.18.1]{olver2010nist}%
\begin{equation}
\mathrm{M}_{0,1/2}\left( x\right) =2\sinh \left( \frac{x}{2}\right) .
\label{M_0,1/2}
\end{equation}%
In addition, performing the transformations $\kappa \rightarrow \kappa +1$, $%
\kappa \rightarrow 0,$ and $n\rightarrow n-1$ in (\ref{M_k+n,k-1/2}), we
obtain $\forall n=1,2,\ldots $
\begin{equation}
\mathrm{M}_{n,1/2}\left( x\right) =\frac{x\,e^{-x/2}}{n}L_{n-1}^{\left(
1\right) }\left( x\right) .  \label{M_n_1/2}
\end{equation}%
Finally, we have\ for $n=1,2,\ldots $ \cite[Eqn. 1.3.7]{lebedev1965special}%
\begin{equation}
\psi \left( n+1\right) =-\gamma +H_{n}.  \label{Psi(n+1)}
\end{equation}%
Insert (\ref{a_l}) and (\ref{b_l})-(\ref{Psi(n+1)})\ in (\ref{Laurenzi_M})\
to arrive at (\ref{DkMnmedio}), as we wanted to prove.
\end{proof}

\begin{corollary}
The following reduction formula holds true for $n=1,2,\ldots $ and $x\in
\mathbb{R}
$,
\begin{eqnarray*}
&&G^{\left( 1\right) }\left( \left.
\begin{array}{c}
1-n \\
2%
\end{array}%
\right\vert x\right) \\
&=&\frac{1}{n}\left\{ \frac{1-e^{x}}{x}-\left[ \ln \left\vert x\right\vert
+\gamma -H_{n}-\mathrm{Ei}\left( x\right) \right] \,L_{n-1}^{\left( 1\right)
}\left( x\right) \right. \\
&&-\left. \sum_{\ell =1}^{n-1}\left( \frac{n+\ell }{n-\ell }-e^{x}\,\mathrm{%
Re}\left[ _{2}F_{1}\left( \left.
\begin{array}{c}
1,\ell \\
\ell +n%
\end{array}%
\right\vert 2\right) \right] \right) \frac{L_{\ell -1}^{\left( 1\right)
}\left( x\right) }{\ell }\right\} .
\end{eqnarray*}
\end{corollary}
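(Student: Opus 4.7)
The plan is to invert the identity (\ref{DkM_G(1)}) to express $G^{(1)}$ in terms of the $\kappa$-derivative of the Whittaker function, then specialize the previous theorem at $\kappa = n$, $\mu = 1/2$.

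First I would set $\mu = 1/2$ in (\ref{DkM_G(1)}), so that $1+2\mu = 2$ and $\tfrac{1}{2}+\mu-\kappa = 1-\kappa$; taking $\kappa = n$ this becomes
\begin{equation*}
\left.\frac{\partial \mathrm{M}_{\kappa,\mu}(x)}{\partial \kappa}\right|_{\kappa=n,\mu=1/2}
= -x\,e^{-x/2}\,G^{(1)}\!\left(\left.\begin{array}{c} 1-n \\ 2 \end{array}\right|x\right),
\end{equation*}
so that
\begin{equation*}
G^{(1)}\!\left(\left.\begin{array}{c} 1-n \\ 2 \end{array}\right|x\right)
= -\frac{e^{x/2}}{x}\left.\frac{\partial \mathrm{M}_{\kappa,\mu}(x)}{\partial \kappa}\right|_{\kappa=n,\mu=1/2}.
\end{equation*}

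Next I would substitute the explicit reduction formula (\ref{DkMnmedio}) established in the preceding theorem. The term $\frac{xe^{-x/2}}{n}\{\cdots\}$ inside (\ref{DkMnmedio}), multiplied by $-e^{x/2}/x$, immediately produces $-\frac{1}{n}\{\cdots\}$, which matches the bracketed part of the claimed identity exactly (including the logarithmic, Laguerre, and $_2F_1$ pieces).

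The only remaining piece is the $\frac{2}{n}\sinh(x/2)$ summand from (\ref{DkMnmedio}); multiplying by $-e^{x/2}/x$ and using the elementary identity $2\sinh(x/2)e^{x/2} = e^x - 1$ gives $\frac{1-e^x}{nx}$, which is exactly the leading term $\frac{1}{n}\cdot\frac{1-e^x}{x}$ appearing on the right-hand side. Collecting everything yields the stated closed form. There is no real obstacle here: the corollary is a direct algebraic rearrangement of the theorem via the bridge (\ref{DkM_G(1)}), and the only non-obvious simplification is the hyperbolic identity $e^{x/2}\cdot 2\sinh(x/2) = e^x-1$.
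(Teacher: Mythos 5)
Your proposal is correct and follows exactly the route the paper takes: its proof simply says to combine (\ref{DkM_G(1)}) with (\ref{DkMnmedio}), which is the inversion and substitution you carry out, including the simplification $e^{x/2}\cdot 2\sinh(x/2)=e^{x}-1$ for the leading term. Nothing is missing; you have just written out the algebra the paper leaves implicit.
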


\begin{proof}
Consider (\ref{DkM_G(1)}) and (\ref{DkMnmedio})\ to arrive at the desired
result.
\end{proof}

In Table \ref{TableDkMmedio}\ we collect some particular cases of (\ref%
{DkMnmedio}) for $x\in
\mathbb{R}
$ obtained with the help of MATHEMATICA\ program.

\begin{center}
\begin{table}[htbp] \centering%
\caption{Derivative of $M_{\kappa,\mu}$ with respect
to $\kappa$ by using (\ref{DkMnmedio}).}%
\begin{tabular}{|c|c|c|}
\hline
$\kappa $ & $\mu $ & $\frac{\partial \mathrm{M}_{\kappa ,\mu }\left(
x\right) }{\partial \kappa }$ \\ \hline\hline
$1$ & $\frac{1}{2}$ & $x\,e^{-x/2}\left[ \ln \left\vert x\right\vert -%
\mathrm{Ei}(x)+\gamma -1\right] +2\sinh \left( \frac{x}{2}\right) $ \\ \hline
$2$ & $\frac{1}{2}$ & $\frac{1}{2}x\,e^{-x/2}\left\{ \left( 2-x\right) \left[
\ln \left\vert x\right\vert -\,\mathrm{Ei}(x)+\gamma -\frac{3}{2}\right]
-e^{x}+3\right\} +\sinh \left( \frac{x}{2}\right) $ \\ \hline
$3$ & $\frac{1}{2}$ & $%
\begin{array}{c}
\frac{1}{6}x\,e^{-x/2}\left[ \left( x^{2}-6x+6\right) \left( \ln \left\vert
x\right\vert -\mathrm{Ei}(x)+\gamma -\frac{11}{6}\right) \right. \\
\quad +\left. (e^{x}-5)(x-2)-3e^{x}+4\right] +\frac{2}{3}\sinh \left( \frac{x%
}{2}\right)%
\end{array}%
$ \\ \hline
\end{tabular}%
\label{TableDkMmedio}%
\end{table}%
\end{center}

\subsection{Derivative with respect to the second parameter $\partial
\mathrm{M}_{\protect\kappa ,\protect\mu }\left( x\right) /\partial \protect%
\mu $}

Using (\ref{M_k,mu_def})\ and (\ref{1F1_Whittaker_def}), the first
derivative of $\mathrm{M}_{\kappa ,\mu }\left( x\right) $ with respect to
the parameter $\mu $ is%
\begin{eqnarray}
&&\frac{\partial \mathrm{M}_{\kappa ,\mu }\left( x\right) }{\partial \mu }
\label{DmuM_1} \\
&=&\left[ \ln x+2\,\psi \left( 1+2\mu \right) -\psi \left( \frac{1}{2}+\mu
-\kappa \right) \right] \mathrm{M}_{\kappa ,\mu }\left( x\right)  \notag \\
&&+x^{\mu +1/2}e^{-x/2}\frac{\Gamma \left( 1+2\mu \right) }{\Gamma \left(
\frac{1}{2}+\mu -\kappa \right) }\left[ S_{1}\left( \kappa ,\mu ,x\right)
-S_{2}\left( \kappa ,\mu ,x\right) \right] ,  \notag
\end{eqnarray}%
where $S_{1}\left( \kappa ,\mu ,x\right) $ is given in (\ref{S1_def})\ and
the series $S_{2}\left( \kappa ,\mu ,x\right) $ is%
\begin{equation}
S_{2}\left( \kappa ,\mu ,x\right) =2\sum_{n=0}^{\infty }\frac{\Gamma \left(
\frac{1}{2}+\mu -\kappa +n\right) }{\Gamma \left( 1+2\mu +n\right) }\psi
\left( 1+2\mu +n\right) \frac{x^{n}}{n!}.  \label{S2_def}
\end{equation}

\begin{theorem}
For $\mu \neq -1/2$ and $x\in
\mathbb{R}
$, the following parameter derivative formula of $\mathrm{M}_{\kappa ,\mu
}\left( x\right) $ holds true:%
\begin{eqnarray}
&&\left. \frac{\partial \mathrm{M}_{\kappa ,\mu }\left( x\right) }{\partial
\mu }\right\vert _{\kappa =-\mu -1/2}  \label{DmuM_k=-mu-1/2} \\
&=&x^{\mu +1/2}e^{x/2}\left[ \ln x-\frac{x}{1+2\mu }\,_{2}F_{2}\left( \left.
\begin{array}{c}
1,1 \\
2\left( \mu +1\right) ,2%
\end{array}%
\right\vert -x\right) \right] .  \notag
\end{eqnarray}
\end{theorem}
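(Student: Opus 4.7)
The plan is to specialize (\ref{DmuM_1}) at $\kappa=-\mu-1/2$ and exploit the fact that this choice forces $\tfrac{1}{2}+\mu-\kappa=1+2\mu$, producing a series of simplifications. First I would observe that the prefactor of $\mathrm{M}_{\kappa,\mu}(x)$ collapses, since $2\psi(1+2\mu)-\psi(1+2\mu)=\psi(1+2\mu)$, and that the Gamma ratio $\Gamma(1+2\mu)/\Gamma(\tfrac12+\mu-\kappa)$ becomes $1$. Likewise, the Kummer function inside $\mathrm{M}_{\kappa,\mu}(x)$ reduces to $_1F_1(1+2\mu;1+2\mu;x)=e^x$, so that
\begin{equation*}
\mathrm{M}_{-\mu-1/2,\mu}(x)=x^{\mu+1/2}e^{x/2}.
\end{equation*}

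Next I would evaluate the two series. Under the same specialization the $\Gamma$-ratio appearing in (\ref{S1_def}) and (\ref{S2_def}) is identically $1/\Gamma(1+2\mu+n)\cdot\Gamma(1+2\mu+n)=1$, so that $S_1$ and $S_2$ both reduce to multiples of the single series $\Sigma(\mu,x):=\sum_{n=0}^\infty \psi(1+2\mu+n)\,x^n/n!$, and in particular
\begin{equation*}
S_1(-\mu-1/2,\mu,x)-S_2(-\mu-1/2,\mu,x)=-\Sigma(\mu,x).
\end{equation*}
Applying formula (\ref{Q_0(t,a)}) with $a=1+2\mu$ and $t=x$ immediately gives
\begin{equation*}
\Sigma(\mu,x)=e^{x}\!\left[\psi(1+2\mu)+\frac{x}{1+2\mu}\,{}_2F_2\!\left(\left.\begin{array}{c}1,1\\2(\mu+1),2\end{array}\right|-x\right)\right].
\end{equation*}

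Finally, inserting these pieces into (\ref{DmuM_1}) yields
\begin{equation*}
\left.\frac{\partial \mathrm{M}_{\kappa,\mu}(x)}{\partial\mu}\right|_{\kappa=-\mu-1/2}=x^{\mu+1/2}e^{x/2}\bigl[\ln x+\psi(1+2\mu)\bigr]-x^{\mu+1/2}e^{-x/2}\Sigma(\mu,x),
\end{equation*}
and substituting the closed form of $\Sigma(\mu,x)$ produces a cancellation of the two $\psi(1+2\mu)$ contributions, leaving exactly (\ref{DmuM_k=-mu-1/2}). There is no genuine obstacle here: the argument is bookkeeping of digamma and exponential factors, and the only nontrivial ingredient is the already quoted identity (\ref{Q_0(t,a)}). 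The one point that warrants care is making sure that the exponential prefactors $e^{-x/2}$ and $e^{x/2}$ combine correctly with the $e^x$ coming out of (\ref{Q_0(t,a)}), and that the factor $(1+2\mu)^{-1}$ in the $_2F_2$ term is tracked through the specialization.
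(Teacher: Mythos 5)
Your proposal is correct and follows essentially the same route as the paper: specialize (\ref{DmuM_1}) at $\kappa=-\mu-1/2$ so that $S_{2}=2S_{1}$ reduces everything to the single series $\sum_{n\geq 0}\psi(1+2\mu+n)x^{n}/n!$, evaluate it via (\ref{Q_0(t,a)}) with $a=1+2\mu$, and use $\mathrm{M}_{-\mu-1/2,\mu}(x)=x^{\mu+1/2}e^{x/2}$ to cancel the $\psi(1+2\mu)$ terms. All the bookkeeping of signs and exponential factors checks out.
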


\begin{proof}
For $\kappa =-\mu -1/2$, we have $S_{2}\left( \kappa ,\mu ,x\right)
=2\,S_{1}\left( \kappa ,\mu ,x\right) $ and therefore (\ref{DmuM_1})\ becomes%
\begin{eqnarray*}
&&\left. \frac{\partial \mathrm{M}_{\kappa ,\mu }\left( x\right) }{\partial
\mu }\right\vert _{\kappa =-\mu -1/2} \\
&=&\left[ \ln x+\psi \left( 1+2\mu \right) \right] \mathrm{M}_{-\mu -1/2,\mu
}\left( x\right) -x^{\mu +1/2}e^{-x/2}S_{1}\left( -\mu -\frac{1}{2},\mu
,x\right) ,
\end{eqnarray*}%
where
\begin{equation*}
S_{1}\left( -\mu -\frac{1}{2},\mu ,x\right) =\sum_{n=0}^{\infty }\psi \left(
1+2\mu +n\right) \frac{x^{n}}{n!},
\end{equation*}%
thus, using (\ref{Q_0(t,a)}),%
\begin{eqnarray}
&&\left. \frac{\partial \mathrm{M}_{\kappa ,\mu }\left( x\right) }{\partial
\mu }\right\vert _{\kappa =-\mu -1/2}  \label{DmuM_2} \\
&=&\left[ \ln x+\psi \left( 1+2\mu \right) \right] \mathrm{M}_{-\mu -1/2,\mu
}\left( x\right)  \notag \\
&&-x^{\mu +1/2}e^{x/2}\left[ \psi \left( 1+2\mu \right) +\frac{x}{1+2\mu }%
\,_{2}F_{2}\left( \left.
\begin{array}{c}
1,1 \\
2\mu +2,2%
\end{array}%
\right\vert -x\right) \right] .  \notag
\end{eqnarray}%
Since, according to (\ref{M_k,mu_def}) and (\ref{1F1_Whittaker_def}),
\begin{equation*}
\mathrm{M}_{-\mu -1/2,\mu }\left( x\right) =x^{\mu +1/2}e^{x/2},
\end{equation*}%
then (\ref{DmuM_2})\ takes the simple form given in (\ref{DmuM_k=-mu-1/2}),
as we wanted to prove.
\end{proof}

\begin{corollary}
For $a\in
\mathbb{R}
$, $a\neq 0$, and $x\in
\mathbb{R}
$, the following reduction formula holds true:%
\begin{equation}
H^{\left( 1\right) }\left( \left.
\begin{array}{c}
a \\
a%
\end{array}%
\right\vert x\right) =-\frac{x\,e^{x}}{a}\,_{2}F_{2}\left( \left.
\begin{array}{c}
1,1 \\
a+1,2%
\end{array}%
\right\vert -x\right) .  \label{H(1)_a_resultado}
\end{equation}
\end{corollary}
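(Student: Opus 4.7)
The plan is to mirror the argument used in Corollary 6, but now exploiting the formula (\ref{DmuM_k=-mu-1/2}) for $\partial M_{\kappa,\mu}/\partial \mu$ at $\kappa=-\mu-1/2$. The key observation is that this specialization forces the top and bottom parameters of the underlying $_{1}F_{1}$ to coincide at $a=b=1+2\mu$, which is exactly the reduction point where we want to evaluate $H^{(1)}$.

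First I would differentiate the definition (\ref{M_k,mu_def}) with respect to $\mu$ directly, treating $\mu$ as appearing in three places: the prefactor $x^{\mu+1/2}$, the top parameter $a=\tfrac12+\mu-\kappa$, and the bottom parameter $b=1+2\mu$. By the chain rule, together with (\ref{G(1)_def}) and (\ref{H(1)_def}), this gives the identity
\begin{equation*}
\frac{\partial \mathrm{M}_{\kappa,\mu}(x)}{\partial \mu}
= \ln x\cdot \mathrm{M}_{\kappa,\mu}(x)
+ x^{\mu+1/2}e^{-x/2}\!\left[G^{(1)}\!\left(\!\left.\begin{array}{c}\tfrac12+\mu-\kappa\\1+2\mu\end{array}\right|x\right)+2H^{(1)}\!\left(\!\left.\begin{array}{c}\tfrac12+\mu-\kappa\\1+2\mu\end{array}\right|x\right)\right].
\end{equation*}
Next I would specialize to $\kappa=-\mu-\tfrac12$, so that both parameters become $1+2\mu$, and substitute $\mathrm{M}_{-\mu-1/2,\mu}(x)=x^{\mu+1/2}e^{x/2}$ (as noted inside the proof of Theorem 9) for the bracketed factor multiplying $\ln x$.

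At this point I would equate the resulting expression with the closed form (\ref{DmuM_k=-mu-1/2}) of Theorem 9, cancel the common $\ln x\cdot x^{\mu+1/2}e^{x/2}$ contribution, and use Corollary 6 (applied with $a=1+2\mu$) to eliminate $G^{(1)}$ explicitly via (\ref{G(1)_a_resultado}). What remains is a linear equation for $H^{(1)}$ with $a=b=1+2\mu$ whose solution is
\begin{equation*}
H^{(1)}\!\left(\!\left.\begin{array}{c}1+2\mu\\1+2\mu\end{array}\right|x\right)=-\frac{x\,e^{x}}{1+2\mu}\,_{2}F_{2}\!\left(\!\left.\begin{array}{c}1,1\\2(\mu+1),2\end{array}\right|-x\right).
\end{equation*}
Renaming $a=1+2\mu$ (so that $2(\mu+1)=a+1$) yields exactly (\ref{H(1)_a_resultado}).

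The computation is essentially algebraic, so there is no deep obstacle; the only step that requires care is the chain-rule bookkeeping in the differentiation, in particular remembering the factor of $2$ attached to $H^{(1)}$ coming from $\partial b/\partial \mu=2$, and the $-x/2$ exponential prefactor, which combines with the $e^{x/2}$ from $\mathrm{M}_{-\mu-1/2,\mu}$ to reproduce the $e^{x}$ appearing on the right-hand side of (\ref{H(1)_a_resultado}). Since $\mu$ is only constrained by $2\mu\neq -1,-2,\ldots$ in (\ref{M_k,mu_def}), the resulting identity extends by analytic continuation to every $a\in\mathbb{R}$ with $a\neq 0$, matching the hypothesis of the statement.
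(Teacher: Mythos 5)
Your proposal is correct and is essentially identical to the paper's own proof: both differentiate the definition of $\mathrm{M}_{\kappa,\mu}$ with respect to $\mu$ to obtain the identity involving $G^{(1)}+2H^{(1)}$, specialize to $\kappa=-\mu-1/2$, compare with the closed form of Theorem 9, and then eliminate $G^{(1)}$ using the earlier corollary. The algebra checks out, including the factor of $2$ from $\partial b/\partial\mu=2$ and the combination of exponential prefactors into $e^{x}$.
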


\begin{proof}
Direct differentiation of (\ref{M_k,mu_def})\ yields%
\begin{eqnarray}
\frac{\partial \mathrm{M}_{\kappa ,\mu }\left( x\right) }{\partial \mu }
&=&\ln x\,\mathrm{M}_{\kappa ,\mu }\left( x\right) +\,x^{\mu +1/2}e^{-x/2}
\label{DmM_H(1)} \\
&&\left[ G^{\left( 1\right) }\left( \left.
\begin{array}{c}
\frac{1}{2}+\mu -\kappa \\
1+2\mu%
\end{array}%
\right\vert x\right) +2\,H^{\left( 1\right) }\left( \left.
\begin{array}{c}
\frac{1}{2}+\mu -\kappa \\
1+2\mu%
\end{array}%
\right\vert x\right) \right] ,  \notag
\end{eqnarray}%
thus comparing (\ref{DmM_H(1)})\ with $\kappa =-\mu -\frac{1}{2}$ to (\ref%
{DmuM_k=-mu-1/2})\ and taking into account (\ref{G(1)_a_resultado}), we
arrive at (\ref{H(1)_a_resultado}), as we wanted to prove.
\end{proof}

Using (\ref{DmuM_k=-mu-1/2}), the derivative of $\mathrm{M}_{\kappa ,\mu
}\left( x\right) $ with respect $\mu $ has been calculated for particular
values of $\kappa $ and $\mu $, with $x\in
\mathbb{R}
$, using the help of MATHEMATICA, and they are presented in Table \ref%
{Table_2A}.

\begin{center}
\begin{table}[tbp] \centering%
\caption{Derivative of $M_{\kappa,\mu}$ with respect
to $\mu$ by using (\ref{DmuM_k=-mu-1/2}).}%
\begin{tabular}{|c|c|c|}
\hline
$\kappa $ & $\mu $ & $\frac{\partial \mathrm{M}_{\kappa ,\mu }\left(
x\right) }{\partial \mu }$ \\ \hline\hline
$-\frac{3}{2}$ & $1$ & $\frac{1}{\sqrt{x}}\left\{ e^{x/2}\left[ x^{2}\left(
\mathrm{Chi}\left( x\right) -\mathrm{Shi}\left( x\right) -\gamma \right) +%
\frac{3}{2}x^{2}-2x+1\right] +e^{-x/2}\left( x-1\right) \right\} $ \\ \hline
$-1$ & $\frac{1}{2}$ & $x\,e^{x/2}\left[ \mathrm{Chi}\left( x\right) -%
\mathrm{Shi}\left( x\right) -\gamma +1\right] -2\sinh \left( \frac{x}{2}%
\right) $ \\ \hline
$-\frac{3}{4}$ & \multicolumn{1}{|c|}{$\frac{1}{4}$} & $e^{x/2}x^{3/4}\left[
\ln x-\frac{2}{3}x\,_{2}F_{2}\left( \left.
\begin{array}{c}
1,1 \\
2,\frac{5}{2}%
\end{array}%
\right\vert -x\right) \right] $ \\ \hline
$-\frac{1}{2}$ & \multicolumn{1}{|c|}{$0$} & $e^{x/2}\sqrt{x}\left[ \mathrm{%
Chi}\left( x\right) -\mathrm{Shi}\left( x\right) -\gamma \right] $ \\ \hline
$-\frac{1}{4}$ & \multicolumn{1}{|c|}{$-\frac{1}{4}$} & $e^{x/2}x^{1/4}\left[
\ln x-2x\,_{2}F_{2}\left( \left.
\begin{array}{c}
1,1 \\
2,\frac{3}{2}%
\end{array}%
\right\vert -x\right) \right] $ \\ \hline
$-\frac{1}{6}$ & \multicolumn{1}{|c|}{$-\frac{1}{3}$} & $e^{x/2}x^{1/6}\left[
\ln x-3x\,_{2}F_{2}\left( \left.
\begin{array}{c}
1,1 \\
2,\frac{4}{3}%
\end{array}%
\right\vert -x\right) \right] $ \\ \hline
$\frac{1}{6}$ & \multicolumn{1}{|c|}{$-\frac{2}{3}$} & $e^{x/2}x^{-1/6}\left[
\ln x+3x\,_{2}F_{2}\left( \left.
\begin{array}{c}
1,1 \\
2,\frac{2}{3}%
\end{array}%
\right\vert -x\right) \right] $ \\ \hline
\end{tabular}%
\label{Table_2A}%
\end{table}%
\end{center}

Note that for $\mu =-1/2$, we obtain an indeterminate expression in (\ref%
{DmuM_k=-mu-1/2}). For this case, we present the following result.

\begin{theorem}
The following parameter derivative formula of $\mathrm{M}_{\kappa ,\mu
}\left( x\right) $ holds true for $x\in
\mathbb{R}
$:%
\begin{eqnarray}
&&\left. \frac{\partial \mathrm{M}_{\kappa ,\mu }\left( x\right) }{\partial
\mu }\right\vert _{\kappa =0}  \label{DmM0} \\
&=&4^{\mu }\sqrt{x}\,\Gamma \left( 1+\mu \right) \left\{ I_{\mu }\left(
\frac{x}{2}\right) \left[ \ln 4+\psi \left( 1+\mu \right) \right] +\frac{%
\partial I_{\mu }\left( x/2\right) }{\partial \mu }\right\} ,  \notag
\end{eqnarray}%
where $I_{\nu }\left( x\right) $ denotes the modified Bessel function.
\end{theorem}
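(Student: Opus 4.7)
The plan is to reduce $\mathrm{M}_{0,\mu}(x)$ to a modified Bessel function and then differentiate a product. Specifically, I will start from the Kummer relation
\begin{equation*}
{}_1F_1\!\left(\left.\begin{array}{c}\mu+\tfrac12\\ 2\mu+1\end{array}\right|x\right)
=\Gamma(\mu+1)\,e^{x/2}\left(\frac{x}{4}\right)^{-\mu}I_{\mu}\!\left(\frac{x}{2}\right),
\end{equation*}
which is the confluent-hypergeometric/Bessel reduction listed in the reduction formulas of the Appendix. Substituting this into the definition (\ref{M_k,mu_def}) with $\kappa=0$ collapses the exponentials and the powers of $x$ and yields the compact identity
\begin{equation*}
\mathrm{M}_{0,\mu}(x)=4^{\mu}\sqrt{x}\,\Gamma(1+\mu)\,I_{\mu}\!\left(\frac{x}{2}\right).
\end{equation*}

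Once this closed form is in hand, the theorem is proved by straightforward parametric differentiation of a product of three $\mu$-dependent factors: $4^{\mu}$, $\Gamma(1+\mu)$, and $I_{\mu}(x/2)$ (the prefactor $\sqrt{x}$ is $\mu$-independent). I would apply the product rule, using $\partial_{\mu}4^{\mu}=4^{\mu}\ln 4$ and $\partial_{\mu}\Gamma(1+\mu)=\Gamma(1+\mu)\psi(1+\mu)$, leaving the derivative $\partial_{\mu}I_{\mu}(x/2)$ as an unresolved symbol on the right-hand side. Factoring out the common $4^{\mu}\sqrt{x}\,\Gamma(1+\mu)$ produces precisely (\ref{DmM0}).

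There is essentially no obstacle beyond bookkeeping: the derivation hinges on (i) the availability of the Bessel reduction for $\mathrm{M}_{0,\mu}(x)$, and (ii) the recognition that $\partial_{\mu}I_{\mu}(x/2)$ is the natural object to carry through, rather than trying to express it via its power-series representation. The only subtlety is confirming that the singularity appearing in (\ref{DmuM_k=-mu-1/2}) at $\mu=-1/2$ (where $\kappa=0$) does not survive here; indeed, the Bessel-side formula is perfectly regular at $\mu=-1/2$, since $I_{-1/2}(x/2)=\sqrt{4/(\pi x)}\cosh(x/2)$, so the reformulation (\ref{DmM0}) has no removable singularity issues to address.
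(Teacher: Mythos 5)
Your proposal is correct and follows essentially the same route as the paper: both rest on the identity $\mathrm{M}_{0,\mu}(x)=4^{\mu}\sqrt{x}\,\Gamma(1+\mu)\,I_{\mu}\left(\frac{x}{2}\right)$ (which the paper simply cites as (\ref{M_0_mu}) from the NIST handbook, while you derive it from the ${}_1F_1$--Bessel reduction) followed by the product rule in $\mu$. The bookkeeping with $\partial_{\mu}4^{\mu}$ and $\partial_{\mu}\Gamma(1+\mu)$ is right, so the argument goes through.
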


\begin{proof}
Differentiate with respect to $\mu $ the expression \cite[Eqn. 13.18.8]%
{olver2010nist}
\begin{equation}
\mathrm{M}_{0,\mu }\left( x\right) =4^{\mu }\,\Gamma \left( 1+\mu \right)
\sqrt{x}I_{\mu }\left( \frac{x}{2}\right) ,  \label{M_0_mu}
\end{equation}%
to obtain (\ref{DmM0}), as we wanted to prove.
\end{proof}

The order derivative of the modified Bessel function $I_{\mu }\left(
x\right) $ is given in terms of the Meijer-G function and the generalized
hypergeometric function $\forall \,\mathrm{Re\,}x>0,\mu \geq 0$ \cite%
{gonzalez2018closed}:%
\begin{eqnarray}
\frac{\partial I_{\mu }\left( x\right) }{\partial \mu } &=&-\frac{\mu
\,I_{\mu }\left( x\right) }{2\sqrt{\pi }}G_{2,4}^{3,1}\left( x^{2}\left\vert
\begin{array}{c}
\frac{1}{2},1 \\
0,0,\mu ,-\mu%
\end{array}%
\right. \right)  \label{DmuI_Meijer} \\
&&-\frac{K_{\mu }\left( x\right) }{\Gamma ^{2}\left( \mu +1\right) }\left(
\frac{x}{2}\right) ^{2\mu }\,_{2}F_{3}\left( \left.
\begin{array}{c}
\mu ,\mu +\frac{1}{2} \\
\mu +1,\mu +1,2\mu +1%
\end{array}%
\right\vert x^{2}\right) ,  \notag
\end{eqnarray}%
where $K_{\nu }\left( x\right) $ is the \textit{modified Bessel function of
the second kind};\ or in terms of generalized hypergeometric functions only $%
\forall \,\mathrm{Re\,}x>0,\mu >0$, $\mu \notin
\mathbb{Z}
$ \cite{brychkov2016higher}:%
\begin{eqnarray}
&&\frac{\partial I_{\mu }\left( x\right) }{\partial \mu }  \label{DmuI_Hyper}
\\
&=&I_{\mu }\left( x\right) \left[ \frac{x^{2}}{4\left( 1-\mu ^{2}\right) }%
\,_{3}F_{4}\left( \left.
\begin{array}{c}
1,1,\frac{3}{2} \\
2,2,2-\mu ,2+\mu%
\end{array}%
\right\vert x^{2}\right) +\ln \left( \frac{x}{2}\right) -\psi \left( \mu
\right) -\frac{1}{2\mu }\right]  \notag \\
&&-I_{-\mu }\left( x\right) \frac{\pi \csc \left( \pi \mu \right) }{%
2\,\Gamma ^{2}\left( \mu +1\right) }\left( \frac{x}{2}\right) ^{2\mu
}\,_{2}F_{3}\left( \left.
\begin{array}{c}
\mu ,\mu +\frac{1}{2} \\
\mu +1,\mu +1,2\mu +1%
\end{array}%
\right\vert x^{2}\right) .  \notag
\end{eqnarray}

There are different expressions for the order derivatives of the Bessel
functions \cite{apelblat1985integral,brychkov2005derivatives}. This subject
is summarized in \cite{apelblat2020bessel}, where general results are
presented in terms of convolution integrals, and order derivatives of Bessel
functions are found for particular values of the order.

Using (\ref{DmM0}), (\ref{DmuI_Meijer})\ and (\ref{DmuI_Hyper}), some
derivatives of $\mathrm{M}_{\kappa ,\mu }\left( x\right) $ with respect $\mu
$ has been calculated for $x\in
\mathbb{R}
$ with the help of MATHEMATICA, and they are presented in Table \ref{Table_2}%
.

\begin{center}
\begin{table}[tbp] \centering%
\caption{Derivative of $M_{\kappa,\mu}$ with respect
to $\mu$ by using (\ref{DmM0}).}%
\begin{tabular}{|c|c|c|}
\hline
$\kappa $ & $\mu $ & $\frac{\partial \mathrm{M}_{\kappa ,\mu }\left(
x\right) }{\partial \mu }$ \\ \hline\hline
$0$ & $-\frac{1}{2}$ & $\left[ \mathrm{Chi}\left( x\right) -\gamma \right]
\cosh \left( \frac{x}{2}\right) -\frac{2}{x}\sinh ^{3}\left( \frac{x}{2}%
\right) $ \\ \hline
$0$ & $0$ & $\sqrt{x}\left[ \left( \ln 4-\gamma \right) I_{0}\left( \frac{x}{%
2}\right) -K_{0}\left( \frac{x}{2}\right) \right] $ \\ \hline
$0$ & $\frac{1}{4}$ & $%
\begin{array}{l}
\frac{x^{3/4}}{15}\,_{0}F_{1}\left( ;\frac{5}{4};\frac{x^{2}}{16}\right) %
\left[ x^{2}\,_{3}F_{4}\left( 1,1,\frac{3}{2};\frac{7}{4},2,2,\frac{9}{4};%
\frac{x^{2}}{4}\right) +15\left( \ln x+2\right) \right] \\
-\frac{2\pi \,x}{\Gamma \left( \frac{1}{4}\right) }I_{-\frac{1}{4}}\left(
\frac{x}{2}\right) \,_{2}F_{3}\left( \frac{1}{4},\frac{3}{4};\frac{5}{4},%
\frac{5}{4},\frac{3}{2};\frac{x^{2}}{4}\right)%
\end{array}%
$ \\ \hline
$0$ & $\frac{1}{3}$ & $%
\begin{array}{l}
\frac{x^{5/6}}{128}\left\{ \,_{0}F_{1}\left( ;\frac{4}{3};\frac{x^{2}}{16}%
\right) \left[ 9x^{2}\,_{3}F_{4}\left( 1,1,\frac{3}{2};\frac{5}{3},2,2,\frac{%
7}{3};\frac{x^{2}}{4}\right) +64\left( 2\ln x+3\right) \right] \right. \\
-\left. 192\,_{0}F_{1}\left( ;\frac{2}{3};\frac{x^{2}}{16}\right)
\,_{2}F_{3}\left( \frac{1}{3},\frac{5}{6};\frac{4}{3},\frac{4}{3},\frac{5}{3}%
;\frac{x^{2}}{4}\right) \right\}%
\end{array}%
$ \\ \hline
$0$ & $\frac{1}{2}$ & $2\left[ \mathrm{Chi}\left( x\right) -\gamma +2\right]
\sinh \left( \frac{x}{2}\right) -2\,\mathrm{Shi}\left( x\right) \cosh \left(
\frac{x}{2}\right) $ \\ \hline
$0$ & $\frac{2}{3}$ & $%
\begin{array}{l}
\frac{x^{7/6}}{80}\left\{ \,_{0}F_{1}\left( ;\frac{5}{3};\frac{x^{2}}{16}%
\right) \left[ 9x^{2}\,_{3}F_{4}\left( 1,1,\frac{3}{2};\frac{4}{3},2,2,\frac{%
8}{3};\frac{x^{2}}{4}\right) +80\ln x+60\right] \right. \\
-\left. 60\,_{0}F_{1}\left( ;\frac{1}{3};\frac{x^{2}}{16}\right)
\,_{2}F_{3}\left( \frac{2}{3},\frac{7}{6};\frac{5}{3},\frac{5}{3},\frac{7}{3}%
;\frac{x^{2}}{4}\right) \right\}%
\end{array}%
$ \\ \hline
$0$ & $\frac{3}{4}$ & $%
\begin{array}{l}
\frac{x^{5/4}}{21}\,_{0}F_{1}\left( ;\frac{7}{4};\frac{x^{2}}{16}\right) %
\left[ 3x^{2}\,_{3}F_{4}\left( 1,1,\frac{3}{2};\frac{5}{4},2,2,\frac{11}{4};%
\frac{x^{2}}{4}\right) +21\ln x+14\right] \\
-\frac{\pi \,x^{2}}{4\,\Gamma \left( \frac{7}{4}\right) }I_{-\frac{3}{4}%
}\left( \frac{x}{2}\right) \,_{2}F_{3}\left( \frac{3}{4},\frac{5}{4};\frac{7%
}{4},\frac{7}{4},\frac{5}{2};\frac{x^{2}}{4}\right)%
\end{array}%
$ \\ \hline
$0$ & $1$ & $%
\begin{array}{l}
4\sqrt{x}\left\{ I_{1}\left( \frac{x}{2}\right) \left[ 1-\gamma +\ln 4-\frac{%
1}{2\sqrt{\pi }}G_{1,3}^{2,1}\left( \frac{x^{2}}{4};\frac{1}{2}%
;0,0,-1\right) \right] \right. \\
\quad -\left. K_{1}\left( \frac{x}{2}\right) \left[ I_{0}^{2}\left( \frac{x}{%
2}\right) -I_{1}^{2}\left( \frac{x}{2}\right) -1\right] \right\}%
\end{array}%
$ \\ \hline
$0$ & $\frac{3}{2}$ & $%
\begin{array}{l}
\frac{4}{x}\left\{ \sinh \left( \frac{x}{2}\right) \left[ 6\gamma -6\,%
\mathrm{Chi}\left( x\right) -3x\,\mathrm{Shi}\left( x\right) -28\right]
\right. \\
\quad +\left. \cosh \left( \frac{x}{2}\right) \left[ \left( 3\,\mathrm{Chi}%
\left( x\right) +8-3\gamma \right) x+6\,\mathrm{Shi}\left( x\right) \right]
\right\}%
\end{array}%
$ \\ \hline
$0$ & $2$ & $%
\begin{array}{l}
32\sqrt{x}\left\{ I_{2}\left( \frac{x}{2}\right) \left[ \frac{3}{2}-\gamma
+\ln 4-\frac{1}{\sqrt{\pi }}G_{2,4}^{3,1}\left( \frac{x^{2}}{4};\frac{1}{2}%
,1;0,0,2,-2\right) \right] \right. \\
\quad +\left. K_{2}\left( \frac{x}{2}\right) \left[ 2\,_{1}F_{2}\left( \frac{%
1}{2};1,3;\frac{x^{2}}{4}\right) -\,_{2}F_{3}\left( \frac{1}{2},2;1,1,3;%
\frac{x^{2}}{4}\right) -1\right] \right\}%
\end{array}%
$ \\ \hline
\end{tabular}%
\label{Table_2}%
\end{table}%
\end{center}

\section{Parameter differentiation of $\mathrm{M}_{\protect\kappa ,\protect%
\mu }$ via integral representations}

\subsection{Derivative with respect to the first parameter $\partial \mathrm{%
M}_{\protect\kappa ,\protect\mu }\left( x\right) /\partial \protect\kappa $}

Integral representations of $\mathrm{M}_{\kappa ,\mu }\left( x\right) $ can
be obtained via integral representations of confluent hypergeometric
function \cite[Sect. 7.4.1]{magnus2013formulas}, thus
\begin{eqnarray}
&&\mathrm{M}_{\kappa ,\mu }\left( x\right)  \notag \\
&=&\frac{\,x^{\mu +1/2}e^{-x/2}}{\mathrm{B}\left( \mu +\kappa +\frac{1}{2}%
,\mu -\kappa +\frac{1}{2}\right) }\int_{0}^{1}e^{xt}t^{\mu -\kappa
-1/2}\left( 1-t\right) ^{\mu +\kappa -1/2}dt  \label{M_k,mu_int_1} \\
&=&\frac{\,x^{\mu +1/2}e^{x/2}}{\mathrm{B}\left( \mu +\kappa +\frac{1}{2}%
,\mu -\kappa +\frac{1}{2}\right) }\int_{0}^{1}e^{-xt}t^{\mu +\kappa
-1/2}\left( 1-t\right) ^{\mu -\kappa -1/2}dt  \label{M_k,mu_int_2} \\
&&\mathrm{Re}\left( \mu \pm \kappa +\frac{1}{2}\right) >0,  \notag
\end{eqnarray}%
where
\begin{equation}
\mathrm{B}\left( a,b\right) =\frac{\Gamma \left( a\right) \Gamma \left(
b\right) }{\Gamma \left( a+b\right) }  \label{Beta_def}
\end{equation}%
denotes the beta function. In order to calculate the first derivative of $%
\mathrm{M}_{\kappa ,\mu }\left( x\right) $ with respect to parameter $\kappa
$, let us introduce the following finite logarithmic integrals.

\begin{definition}
\begin{eqnarray}
I_{1}\left( \kappa ,\mu ;x\right) &=&\int_{0}^{1}e^{xt}t^{\mu -\kappa
-1/2}\left( 1-t\right) ^{\mu +\kappa -1/2}\ln \left( \frac{1-t}{t}\right) dt,
\label{I1_def} \\
I_{2}\left( \kappa ,\mu ;x\right) &=&\int_{0}^{1}e^{-xt}t^{\mu +\kappa
-1/2}\left( 1-t\right) ^{\mu -\kappa -1/2}\ln \left( \frac{t}{1-t}\right) dt.
\label{I2_def}
\end{eqnarray}
\end{definition}

Differentiation of (\ref{M_k,mu_int_1}) and (\ref{M_k,mu_int_2})\ with
respect to parameter $\kappa $ yields respectively%
\begin{eqnarray}
\frac{\partial \mathrm{M}_{\kappa ,\mu }\left( x\right) }{\partial \kappa }
&=&\left[ \psi \left( \mu -\kappa +\frac{1}{2}\right) -\psi \left( \mu
+\kappa +\frac{1}{2}\right) \right] \mathrm{M}_{\kappa ,\mu }\left( x\right)
\label{DkM_I1} \\
&&+\frac{\,x^{\mu +1/2}e^{-x/2}}{\mathrm{B}\left( \mu +\kappa +\frac{1}{2}%
,\mu -\kappa +\frac{1}{2}\right) }I_{1}\left( \kappa ,\mu ;x\right)  \notag
\\
&=&\left[ \psi \left( \mu -\kappa +\frac{1}{2}\right) -\psi \left( \mu
+\kappa +\frac{1}{2}\right) \right] \mathrm{M}_{\kappa ,\mu }\left( x\right)
\label{DkM_I2} \\
&&+\frac{\,x^{\mu +1/2}e^{x/2}}{\mathrm{B}\left( \mu +\kappa +\frac{1}{2}%
,\mu -\kappa +\frac{1}{2}\right) }I_{2}\left( \kappa ,\mu ;x\right) ,  \notag
\end{eqnarray}

Note that, from (\ref{DkM_I1}) and (\ref{DkM_I2}), we have%
\begin{equation}
I_{2}\left( \kappa ,\mu ;x\right) =e^{-x}I_{1}\left( \kappa ,\mu ;x\right) .
\label{I1->I2}
\end{equation}

Likewise, we can depart from other integral respresentations of $\mathrm{M}%
_{\kappa ,\mu }\left( x\right) $ \cite[Sect. 7.4.1]{magnus2013formulas}%
\footnote{%
There are some typos in this reference regarding these integral
representations.}, to obtain%
\begin{eqnarray}
\mathrm{M}_{\kappa ,\mu }\left( x\right) &=&\frac{2^{-2\mu }\,x^{\mu +1/2}}{%
\mathrm{B}\left( \mu +\kappa +\frac{1}{2},\mu -\kappa +\frac{1}{2}\right) }
\label{M_k,mu_int_3} \\
&&\int_{-1}^{1}e^{xt/2}\left( 1+t\right) ^{\mu -\kappa -1/2}\left(
1-t\right) ^{\mu +\kappa -1/2}dt  \notag \\
&=&\frac{2^{-2\mu }\,x^{\mu +1/2}}{\mathrm{B}\left( \mu +\kappa +\frac{1}{2}%
,\mu -\kappa +\frac{1}{2}\right) }  \label{M_k,mu_int_4} \\
&&\int_{-1}^{1}e^{-xt/2}\left( 1+t\right) ^{\mu +\kappa -1/2}\left(
1-t\right) ^{\mu -\kappa -1/2}dt  \notag \\
&&\mathrm{Re}\left( \mu \pm \kappa +\frac{1}{2}\right) >0,  \notag
\end{eqnarray}%
and consequently, we have%
\begin{eqnarray}
\frac{\partial \mathrm{M}_{\kappa ,\mu }\left( x\right) }{\partial \kappa }
&=&\left[ \psi \left( \mu -\kappa +\frac{1}{2}\right) -\psi \left( \mu
+\kappa +\frac{1}{2}\right) \right] \mathrm{M}_{\kappa ,\mu }\left( x\right)
\label{DkM_I3} \\
&&+\frac{2^{-2\mu }\,x^{\mu +1/2}}{\mathrm{B}\left( \mu +\kappa +\frac{1}{2}%
,\mu -\kappa +\frac{1}{2}\right) }I_{3}\left( \kappa ,\mu ;x\right)  \notag
\\
&=&\left[ \psi \left( \mu -\kappa +\frac{1}{2}\right) -\psi \left( \mu
+\kappa +\frac{1}{2}\right) \right] \mathrm{M}_{\kappa ,\mu }\left( x\right)
\label{DkM_I4} \\
&&+\frac{2^{-2\mu }\,x^{\mu +1/2}}{\mathrm{B}\left( \mu +\kappa +\frac{1}{2}%
,\mu -\kappa +\frac{1}{2}\right) }I_{4}\left( \kappa ,\mu ;x\right) ,  \notag
\end{eqnarray}%
where we have defined the following logarithmic integrals:\

\begin{definition}
\begin{eqnarray}
I_{3}\left( \kappa ,\mu ;x\right) &=&\int_{-1}^{1}e^{xt/2}\left( 1+t\right)
^{\mu -\kappa -1/2}\left( 1-t\right) ^{\mu +\kappa -1/2}\ln \left( \frac{1-t%
}{1+t}\right) dt,  \label{I3_def} \\
I_{4}\left( \kappa ,\mu ;x\right) &=&\int_{-1}^{1}e^{-xt/2}\left( 1+t\right)
^{\mu +\kappa -1/2}\left( 1-t\right) ^{\mu -\kappa -1/2}\ln \left( \frac{1+t%
}{1-t}\right) dt.  \label{I4_def}
\end{eqnarray}
\end{definition}

Note that, from (\ref{DkM_I3})-(\ref{DkM_I4}), we have%
\begin{equation}
\,I_{3}\left( \kappa ,\mu ;x\right) =I_{4}\left( \kappa ,\mu ;x\right)
=2^{2\mu }e^{-x/2}I_{1}\left( \kappa ,\mu ;x\right) .  \label{I4=I3->I1}
\end{equation}

Since $\,I_{2}\left( \kappa ,\mu ;x\right) $, $\,I_{3}\left( \kappa ,\mu
;x\right) $, and $\,I_{4}\left( \kappa ,\mu ;x\right) $ are reduced to the
calculation of $\,I_{1}\left( \kappa ,\mu ;x\right) $, next we calculate the
latter integral.

\begin{theorem}
The following integral holds true for $x\in
\mathbb{R}
$:%
\begin{eqnarray}
&&I_{1}\left( \kappa ,\mu ;x\right)  \label{I1_general} \\
&=&\mathrm{B}\left( \mu +\kappa +\frac{1}{2},\mu -\kappa +\frac{1}{2}\right)
\notag \\
&&\left\{ \left[ \psi \left( \frac{1}{2}+\mu +\kappa \right) -\psi \left(
\frac{1}{2}+\mu -\kappa \right) \right] \,_{1}F_{1}\left( \left.
\begin{array}{c}
\frac{1}{2}+\mu -\kappa \\
1+2\mu%
\end{array}%
\right\vert x\right) \right.  \notag \\
&&-\left. G^{\left( 1\right) }\left( \left.
\begin{array}{c}
\frac{1}{2}+\mu -\kappa \\
1+2\mu%
\end{array}%
\right\vert x\right) \right\} .  \notag
\end{eqnarray}
\end{theorem}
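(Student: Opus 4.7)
The plan is to exploit the fact that the paper has already derived two different expressions for $\partial \mathrm{M}_{\kappa,\mu}(x)/\partial\kappa$, one via the series and one via the integral representation, and to simply equate them.

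The first ingredient is equation (\ref{DkM_G(1)}), which expresses the derivative as $-x^{\mu+1/2}e^{-x/2}\,G^{(1)}\bigl(\tfrac{1}{2}+\mu-\kappa;\,1+2\mu;\,x\bigr)$. The second is equation (\ref{DkM_I1}), which expresses the same derivative as $[\psi(\mu-\kappa+\tfrac{1}{2})-\psi(\mu+\kappa+\tfrac{1}{2})]\mathrm{M}_{\kappa,\mu}(x)$ plus the $I_{1}$-term divided by the beta-function prefactor. Substituting $\mathrm{M}_{\kappa,\mu}(x)=x^{\mu+1/2}e^{-x/2}\,_{1}F_{1}\bigl(\tfrac{1}{2}+\mu-\kappa;\,1+2\mu;\,x\bigr)$ into the latter and cancelling the common factor $x^{\mu+1/2}e^{-x/2}$ reduces the identity to
\begin{equation*}
-G^{(1)}=\bigl[\psi(\tfrac{1}{2}+\mu-\kappa)-\psi(\tfrac{1}{2}+\mu+\kappa)\bigr]\,_{1}F_{1}+\frac{I_{1}(\kappa,\mu;x)}{\mathrm{B}(\mu+\kappa+\tfrac{1}{2},\mu-\kappa+\tfrac{1}{2})}.
\end{equation*}
Solving algebraically for $I_{1}$ yields precisely the claimed formula (\ref{I1_general}).

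The proof is therefore essentially a one-line algebraic rearrangement; no analytic obstacles arise, because convergence of $I_{1}$ at both endpoints is already guaranteed by the standing assumption $\mathrm{Re}(\mu\pm\kappa+\tfrac{1}{2})>0$ inherited from (\ref{M_k,mu_int_1}). The only point that requires care is sign-tracking: the logarithm in $I_{1}$ is $\ln((1-t)/t)$, whereas the integrand appearing in the representation of $G^{(1)}$ derived in the introduction carries $\ln(t/(1-t))$, which contributes the $-G^{(1)}$ (rather than $+G^{(1)}$) on the right-hand side. As a sanity check, one may verify the same result intrinsically by differentiating the beta-function integral representation (\ref{1F1_integral_representation}) with respect to $a$, since $I_{1}$ is, up to the constant $\Gamma(b)/[\Gamma(a)\Gamma(b-a)]$, nothing other than minus that $a$-derivative evaluated at $a=\tfrac{1}{2}+\mu-\kappa$, $b=1+2\mu$; the product rule applied to the prefactor $\Gamma(a)\Gamma(b-a)$ produces exactly the digamma combination $\psi(\tfrac{1}{2}+\mu+\kappa)-\psi(\tfrac{1}{2}+\mu-\kappa)$ appearing in (\ref{I1_general}).
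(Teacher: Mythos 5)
Your proposal is correct and is essentially the paper's own proof: the authors likewise obtain (\ref{I1_general}) by comparing (\ref{DkM_I1}) with (\ref{DkM_G(1)}) via the definition (\ref{M_k,mu_def}) and solving for $I_{1}$. The algebra and sign-tracking in your rearrangement check out, so nothing further is needed.
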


\begin{proof}
Comparing (\ref{DkM_I1})\ to (\ref{DkM_G(1)}), taking into account (\ref%
{M_k,mu_def}), we arrive at (\ref{I1_general}), as we wanted to prove.
\end{proof}

\begin{corollary}
For $\kappa =0$, Eqn. (\ref{I1_general})\ is reduced to%
\begin{equation}
I_{1}\left( 0,\mu ;x\right) =-\mathrm{B}\left( \mu +\frac{1}{2},\mu +\frac{1%
}{2}\right) G^{\left( 1\right) }\left( \left.
\begin{array}{c}
\frac{1}{2}+\mu \\
1+2\mu%
\end{array}%
\right\vert x\right) .  \label{I1(0,mu,x)}
\end{equation}
\end{corollary}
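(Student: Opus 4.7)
The plan is simply to specialize the general formula (\ref{I1_general}) at $\kappa=0$ and observe the collapse. First I would substitute $\kappa=0$ into each factor appearing on the right-hand side of (\ref{I1_general}): the Beta prefactor becomes $\mathrm{B}\!\left(\mu+\tfrac{1}{2},\mu+\tfrac{1}{2}\right)$; the hypergeometric and $G^{(1)}$ entries $\tfrac{1}{2}+\mu-\kappa$ both become $\tfrac{1}{2}+\mu$; and the digamma arguments $\tfrac{1}{2}+\mu+\kappa$ and $\tfrac{1}{2}+\mu-\kappa$ both collapse to $\tfrac{1}{2}+\mu$.

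Next I would note the key cancellation: the bracketed digamma difference $\psi(\tfrac{1}{2}+\mu+\kappa)-\psi(\tfrac{1}{2}+\mu-\kappa)$ vanishes identically at $\kappa=0$, so the entire contribution containing the $_{1}F_{1}$ term drops out. What remains is exactly the $-G^{(1)}$ term multiplied by the Beta function, yielding the claimed identity (\ref{I1(0,mu,x)}). There is essentially no obstacle here; the corollary is a direct specialization and the only thing to verify is that (\ref{I1_general}) is applicable at $\kappa=0$, which requires $\mathrm{Re}(\mu+\tfrac{1}{2})>0$ so that the Beta function is finite and the underlying integral representations (\ref{M_k,mu_int_1})--(\ref{M_k,mu_int_2}) converge; this condition is inherited from the hypotheses of the preceding theorem.
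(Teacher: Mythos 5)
Your proposal is correct and matches the paper's (implicit) argument exactly: the corollary is stated without a separate proof precisely because, at $\kappa=0$, the difference $\psi\left(\tfrac{1}{2}+\mu+\kappa\right)-\psi\left(\tfrac{1}{2}+\mu-\kappa\right)$ in (\ref{I1_general}) vanishes and only the $-G^{\left(1\right)}$ term survives. Your added remark about the convergence condition $\mathrm{Re}\left(\mu+\tfrac{1}{2}\right)>0$ is a sensible, if unstated, precaution.
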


\begin{theorem}
For $\ell \in
\mathbb{Z}
$ and $m=0,1,2,\ldots $, with $m\geq \ell $, the following integral holds
true for $x\in
\mathbb{R}
$:%
\begin{equation}
I_{1}\left( \frac{\ell }{2},m+\frac{1-\ell }{2};x\right) =e^{x}\mathcal{F}%
\left( -\ell ,m-\ell ,-x\right) -\mathcal{F}\left( \ell ,m,x\right) ,
\label{I1_ml_reduction}
\end{equation}%
where%
\begin{eqnarray}
&&\mathcal{F}\left( s,k,z\right)  \label{F_def} \\
&=&\sum_{n=0}^{k}\left( -1\right) ^{n}\binom{k}{n}\frac{d^{n+k-s}}{dz^{n+k-s}%
}\left[ \frac{\ln z-\mathrm{Chi}\left( z\right) -\mathrm{Shi}\left( z\right)
+\gamma }{z}\right] \,,  \notag
\end{eqnarray}%
and the functions $\mathrm{Shi}\left( z\right) $ and $\mathrm{Chi}\left(
z\right) $ denote the hyperbolic sine and cosine integrals.
\end{theorem}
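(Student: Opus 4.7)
The plan is to specialize parameters, split the logarithm, and reduce each resulting piece to derivatives of a single master function. Substituting $\kappa=\ell/2$ and $\mu=m+(1-\ell)/2$ into definition (\ref{I1_def}), the exponents of $t$ and $1-t$ collapse to the non-negative integers $m-\ell$ and $m$ (using $m\geq\ell$), so that
\begin{equation*}
I_{1}\!\left(\tfrac{\ell}{2},m+\tfrac{1-\ell}{2};x\right) = \int_{0}^{1} e^{xt}\,t^{m-\ell}(1-t)^{m}\bigl[\ln(1-t)-\ln t\bigr]\,dt.
\end{equation*}

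The first step I would establish is the master identity
\begin{equation*}
F(x) := \int_{0}^{1} e^{xt}\ln t\,dt = \frac{\ln x - \mathrm{Chi}(x) - \mathrm{Shi}(x) + \gamma}{x},
\end{equation*}
by a regularized integration by parts on $[\epsilon,1]$ together with the primitive representation $\mathrm{Chi}(z)+\mathrm{Shi}(z)=\gamma+\ln z+\int_{0}^{z}(e^{t}-1)/t\,dt$. Differentiating $p$ times under the integral sign then yields
\begin{equation*}
\int_{0}^{1} e^{xt}\,t^{p}\ln t\,dt = F^{(p)}(x), \qquad p\in\mathbb{Z}_{\geq 0},
\end{equation*}
which is the workhorse for both halves of the calculation.

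With this lemma in hand, I would treat the two logarithmic pieces separately. For the $-\ln t$ piece, expand $(1-t)^{m}=\sum_{n=0}^{m}(-1)^{n}\binom{m}{n}t^{n}$, integrate termwise, and apply the lemma with $p=m-\ell+n\geq 0$; the resulting sum matches (\ref{F_def}) exactly and contributes $-\mathcal{F}(\ell,m,x)$. For the $\ln(1-t)$ piece, perform the substitution $t\mapsto 1-t$ to rewrite it as $e^{x}\int_{0}^{1}e^{-xs}\,s^{m}(1-s)^{m-\ell}\ln s\,ds$, expand $(1-s)^{m-\ell}$ by the binomial theorem (a finite sum because $m\geq\ell$), and invoke the lemma at argument $-x$ with $p=m+n$. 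The output identifies with $e^{x}\mathcal{F}(-\ell,m-\ell,-x)$, since the derivative index $n+k-s$ in (\ref{F_def}) becomes $n+(m-\ell)-(-\ell)=n+m$. Summing the two contributions produces (\ref{I1_ml_reduction}).

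The main obstacle is the master identity for $F(x)$: the $\ln t$ singularity at $t=0$ forces one to introduce a cutoff $\epsilon$, then carefully cancel the divergent boundary term $(e^{\epsilon}-1)\ln\epsilon$ against a companion divergence coming from $\int_{\epsilon}^{x}du/u$, and only then recognize the surviving finite integral $-\int_{0}^{x}(e^{u}-1)/u\,du$ via the hyperbolic sine and cosine integrals. Once this identity is in place, the remainder of the argument is routine bookkeeping with the binomial theorem and the change of variable $y=-x$.
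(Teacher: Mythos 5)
Your proposal is correct, and its skeleton coincides with the paper's: the same splitting of $\ln\left(\frac{1-t}{t}\right)$, the same substitution $\tau=1-t$ to convert the $\ln(1-t)$ piece into $e^{x}\mathcal{I}_{1}(-\kappa,\mu;-x)$, and the same binomial expansion reducing everything to integrals $\int_{0}^{1}e^{xt}t^{p}\ln t\,dt$ with $p\geq 0$. Where you genuinely diverge is in the evaluation of that master integral. The paper imports the Appendix formula $\int_{0}^{1}e^{xt}t^{p}\ln t\,dt=-\frac{1}{(p+1)^{2}}\,_{2}F_{2}\!\left(\left.\begin{array}{c}p+1,p+1\\ p+2,p+2\end{array}\right\vert x\right)$ (itself derived from the parameter derivative of the incomplete gamma function), then applies the hypergeometric differentiation formula to rewrite each term as $\frac{d^{p}}{dx^{p}}\,_{2}F_{2}\!\left(\left.\begin{array}{c}1,1\\2,2\end{array}\right\vert x\right)$, and finally invokes a tabulated closed form for that $_{2}F_{2}$ together with $\mathrm{Ei}$--$\mathrm{Chi}$--$\mathrm{Shi}$ reflection identities. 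You instead compute the $p=0$ case directly (integration by parts with $v=(e^{xt}-1)/x$ makes the boundary term vanish without any divergence to cancel, which is cleaner than the cutoff bookkeeping you describe) and obtain the general $p$ by differentiating under the integral sign in $x$ --- which is, in effect, a transparent proof of the very differentiation step the paper performs abstractly on the $_{2}F_{2}$. Your route is more elementary and self-contained, at the cost of not exhibiting the intermediate $_{2}F_{2}$ representation that the paper reuses elsewhere; both arguments share the same implicit caveat that for $x<0$ the individual terms $\ln z$ and $\mathrm{Chi}(z)$ are complex but their combination is real.
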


\begin{proof}
From the definition of $I_{1}\left( \kappa ,\mu ;x\right) $ given in (\ref%
{I1_def}), we have%
\begin{eqnarray*}
I_{1}\left( \kappa ,\mu ;x\right) &=&\int_{0}^{1}e^{xt}t^{\mu -\kappa
-1/2}\left( 1-t\right) ^{\mu +\kappa -1/2}\ln \left( 1-t\right) dt \\
&&-\int_{0}^{1}e^{xt}t^{\mu -\kappa -1/2}\left( 1-t\right) ^{\mu +\kappa
-1/2}\ln t\,dt.
\end{eqnarray*}%
Perform the change of variables $\tau =1-t$ in the first integral above to
arrive at%
\begin{equation}
I_{1}\left( \kappa ,\mu ;x\right) =e^{x}\mathcal{I}_{1}\left( -\kappa ,\mu
;-x\right) -\mathcal{I}_{1}\left( \kappa ,\mu ;x\right) ,  \label{I1_F}
\end{equation}%
where we have set
\begin{equation}
\mathcal{I}_{1}\left( \kappa ,\mu ;x\right) =\int_{0}^{1}e^{xt}t^{\mu
-\kappa -1/2}\left( 1-t\right) ^{\mu +\kappa -1/2}\ln t\,dt.
\label{I1_calligrafic_def}
\end{equation}%
Taking into account the binomial theorem and the integral (\ref{gamma_int_2}%
) calculated in the Appendix, i.e.
\begin{equation*}
\int_{0}^{1}e^{xt}t^{m}\ln t\,dt=\frac{-1}{\left( m+1\right) ^{2}}%
\,_{2}F_{2}\left( \left.
\begin{array}{c}
m+1,m+1 \\
m+2,m+2%
\end{array}%
\right\vert x\right) ,
\end{equation*}%
calculate%
\begin{eqnarray}
&&\mathcal{I}_{1}\left( \frac{\ell }{2},m+\frac{1-\ell }{2};x\right)
\label{I1_calligrafic_resultado_a} \\
&=&\int_{0}^{1}e^{xt}t^{m-\ell }\left( 1-t\right) ^{m}\ln t\,dt  \notag \\
&=&\sum_{n=0}^{m}\binom{m}{n}\left( -1\right)
^{n}\int_{0}^{1}e^{xt}t^{m+n-\ell }\ln t\,dt  \notag \\
&=&\sum_{n=0}^{m}\binom{m}{n}\frac{\left( -1\right) ^{n+1}}{\left( n+m-\ell
+1\right) ^{2}}\,_{2}F_{2}\left( \left.
\begin{array}{c}
n+m-\ell +1,n+m-\ell +1 \\
n+m-\ell +2,n+m-\ell +2%
\end{array}%
\right\vert x\right) .  \notag
\end{eqnarray}%
Now, apply the differentiation formula \cite[Eqn. 16.3.1]{olver2010nist}%
\begin{equation*}
\frac{d^{n}}{dz^{n}}\,_{p}F_{q}\left( \left.
\begin{array}{c}
a_{1},\ldots ,a_{p} \\
b_{1},\ldots ,b_{q}%
\end{array}%
\right\vert z\right) =\frac{\left( a_{1}\right) _{n}\cdots \left(
a_{p}\right) _{n}}{\left( b_{1}\right) _{n}\cdots \left( b_{q}\right) _{n}}%
\,_{p}F_{q}\left( \left.
\begin{array}{c}
a_{1}+n,\ldots ,a_{p}+n \\
b_{1}+n,\ldots ,b_{q}+n%
\end{array}%
\right\vert z\right) ,
\end{equation*}%
to obtain%
\begin{equation}
\mathcal{I}_{1}\left( \frac{\ell }{2},m+\frac{1-\ell }{2};x\right)
=\sum_{n=0}^{m}\binom{m}{n}\left( -1\right) ^{n+1}\frac{d^{n+m-\ell }}{%
dx^{n+m-\ell }}\,_{2}F_{2}\left( \left.
\begin{array}{c}
1,1 \\
2,2%
\end{array}%
\right\vert x\right) .  \label{I1_calligrafic_D}
\end{equation}%
According to \cite[Eqn. 7.12.2(67)]{prudnikov1986integrals}, we have that%
\begin{equation}
_{2}F_{2}\left( \left.
\begin{array}{c}
1,1 \\
2,2%
\end{array}%
\right\vert x\right) =\frac{\mathrm{Ei}\left( x\right) -\ln \left( -x\right)
-\gamma }{x},  \label{2F2_Ei}
\end{equation}%
In order to obtain similar expressions as the ones obtained in Table \ref%
{Table_1}, we derive an alternative form of (\ref{2F2_Ei}). Indeed, from the
definition of the \textit{hyperbolic sine and cosine integrals} \cite[Eqns.
6.2.15-16]{olver2010nist}, $\forall z\in
\mathbb{C}
$,%
\begin{eqnarray*}
\mathrm{Shi}\left( z\right) &=&\int_{0}^{z}\frac{\sinh t}{t}dt \\
\mathrm{Chi}\left( z\right) &=&\gamma +\ln z+\int_{0}^{z}\frac{\cosh t-1}{t}%
dt,
\end{eqnarray*}%
it is easy to prove that%
\begin{eqnarray}
\mathrm{Shi}\left( -z\right) &=&-\mathrm{Shi}\left( z\right) ,
\label{Shi(-z)} \\
\mathrm{Chi}\left( -z\right) &=&\mathrm{Chi}\left( z\right) -\ln z+\ln
\left( -z\right) .  \label{Chi(-z)}
\end{eqnarray}%
Also, from the definition of \textit{complementary exponential integral}
\cite[Eqn. 6.2.3]{olver2010nist}:
\begin{equation*}
\mathrm{Ein}\left( z\right) =\int_{0}^{z}\frac{1-e^{-t}}{t}dt,
\end{equation*}%
and the property $\forall x>0$ \cite[Eqn. 6.2.7]{olver2010nist}%
\begin{equation*}
\mathrm{Ei}\left( -x\right) =-\mathrm{Ein}\left( x\right) +\ln x+\gamma ,
\end{equation*}%
it is easy to prove that%
\begin{equation*}
\mathrm{Ei}\left( -x\right) =\mathrm{Chi}\left( x\right) -\mathrm{Shi}\left(
x\right) ,
\end{equation*}%
thus, taking into account (\ref{Shi(-z)})\ and (\ref{Chi(-z)}), we have%
\begin{equation}
\mathrm{Ei}\left( x\right) =\mathrm{Chi}\left( x\right) -\ln x+\ln \left(
-x\right) +\mathrm{Shi}\left( x\right) .  \label{Ei(x)}
\end{equation}%
Insert (\ref{Ei(x)})\ in (\ref{2F2_Ei}), to obtain
\begin{equation}
_{2}F_{2}\left( \left.
\begin{array}{c}
1,1 \\
2,2%
\end{array}%
\right\vert x\right) =\frac{\mathrm{Chi}\left( x\right) -\ln x+\mathrm{Shi}%
\left( x\right) -\gamma }{x}.  \label{2F2_Shi_Chi}
\end{equation}%
Finally, substitute (\ref{2F2_Shi_Chi})\ in (\ref{I1_calligrafic_D}),\ and
take into account (\ref{F_def}), to arrive at%
\begin{eqnarray*}
&&\mathcal{I}_{1}\left( \frac{\ell }{2},m+\frac{1-\ell }{2};x\right) \\
&=&\sum_{n=0}^{m}\binom{m}{n}\left( -1\right) ^{n+1}\frac{d^{n+m-\ell }}{%
dx^{n+m-\ell }}\,\left[ \frac{\mathrm{Chi}\left( x\right) -\ln x+\mathrm{Shi}%
\left( x\right) -\gamma }{x}\right] \\
&=&\mathcal{F}\left( \ell ,m,x\right) .
\end{eqnarray*}%
Similarly, calculate%
\begin{equation}
\mathcal{I}_{1}\left( -\frac{\ell }{2},m+\frac{1-\ell }{2};-x\right) =%
\mathcal{F}\left( -\ell ,m-\ell ,-x\right) .
\label{I1_calligrafic_resultado_b}
\end{equation}%
Finally, according to (\ref{I1_F}), we arrive at (\ref{I1_ml_reduction}), as
we wanted to prove.
\end{proof}

Table \ref{Table_3A} shows the integral $I_{1}\left( \kappa ,\mu ;x\right) $
for $x\in
\mathbb{R}
$ and particular values of the parameters $\kappa $ and/or $\mu $, obtained
from (\ref{I1_general}) and (\ref{I1_ml_reduction}) with the aid of
MATHEMATICA\ program.

\begin{center}
\begin{table}[htbp] \centering%
\caption{Integral $I_1(\kappa,\mu;x)$  for particular values of $\kappa$ and
$\mu$.}%
\rotatebox{90}{
\begin{tabular}{|c|c|c|}
\hline
$\kappa $ & $\mu $ & $I_{1}\left( \kappa ,\mu ;x\right) $ \\ \hline\hline
$-\frac{1}{2}$ & $1$ & $\frac{1}{x^{2}}\left\{ e^{x}\left( 1-x\right) \left[
\ln x+\gamma +\mathrm{Shi}\left( x\right) -\mathrm{Chi}\left( x\right) %
\right] +\ln x+\gamma -\mathrm{Chi}\left( x\right) -\mathrm{Shi}\left(
x\right) \right\} $ \\ \hline
$-\frac{1}{2}$ & $\mu $ & $-\frac{\sqrt{\pi }}{2}\Gamma \left( \mu \right)
\left\{ \frac{e^{x/2}x^{1/2-\mu }}{\mu }\left[ I_{\mu -1/2}\left( \frac{x}{2}%
\right) +I_{\mu +1/2}\left( \frac{x}{2}\right) \right] +\frac{2^{1-2\mu }}{%
\Gamma \left( \mu +\frac{1}{2}\right) }G^{\left( 1\right) }\left( \mu
+1;2\mu +1;x\right) \right\} $ \\ \hline
$\frac{1}{2}$ & $1$ & $\frac{1}{x^{2}}\left\{ \left( x+e^{x}+1\right) \left[
\mathrm{Chi}\left( x\right) -\ln x-\gamma \right] +\left( x-e^{x}+1\right)
\mathrm{Shi}\left( x\right) \right\} $ \\ \hline
$\frac{1}{2}$ & $\mu $ & $\frac{\sqrt{\pi }}{2}\Gamma \left( \mu \right)
\left\{ \frac{e^{x/2}x^{1/2-\mu }}{\mu }\left[ I_{\mu -1/2}\left( \frac{x}{2}%
\right) -I_{\mu +1/2}\left( \frac{x}{2}\right) \right] -\frac{2^{1-2\mu }}{%
\Gamma \left( \mu +\frac{1}{2}\right) }G^{\left( 1\right) }\left( \mu ;2\mu
+1;x\right) \right\} $ \\ \hline
$1$ & $\mu $ & $%
\begin{array}{l}
\Gamma \left( \mu -\frac{1}{2}\right) \left\{ \frac{4\sqrt{\pi }\mu
\,e^{x/2}x^{-\mu }}{4\mu ^{2}-1}\left[ \left( 2\mu -x+1\right) I_{\mu
}\left( \frac{x}{2}\right) +x\,I_{\mu +1}\left( \frac{x}{2}\right) \right]
\right. \\
\quad \left. -\frac{\Gamma \left( \mu +\frac{2}{3}\right) }{\Gamma \left(
2\mu +1\right) }G^{\left( 1\right) }\left( \mu -\frac{1}{2};2\mu +1;x\right)
\right\}%
\end{array}%
$ \\ \hline
$\kappa $ & $0$ & $\pi \sec \left( \pi \,\kappa \right) \left[ \pi \,\tan
\left( \pi \,\kappa \right) L_{\kappa -1/2}\left( x\right) -G^{\left(
1\right) }\left( \frac{1}{2}-\kappa ;1;x\right) \right] $ \\ \hline
$\kappa $ & $\frac{1}{2}$ & $-\pi \csc \left( \pi \,\kappa \right) \left\{ %
\left[ \pi \,\kappa \cot \left( \pi \,\kappa \right) -1\right]
\,_{1}F_{1}\left( 1-\kappa ;2;x\right) +\kappa \,G^{\left( 1\right) }\left(
1-\kappa ;2;x\right) \right\} $ \\ \hline
$\kappa $ & $\kappa $ & $\sqrt{\pi }\frac{\Gamma \left( 2\kappa +\frac{1}{2}%
\right) }{\Gamma \left( 2\kappa +1\right) }\left\{ \left[ H_{2\kappa
-1/2}+2\ln 2\right] \,_{1}F_{1}\left( \frac{1}{2};2\kappa +1;x\right)
-G^{\left( 1\right) }\left( \frac{1}{2};2\kappa +1;x\right) \right\} $ \\
\hline
$\frac{1}{4}$ & $\frac{1}{4}$ & $\frac{4e^{x}\ln 2}{\sqrt{x}}F\left( \sqrt{x}%
\right) -2G^{\left( 1\right) }\left( \frac{1}{2};\frac{3}{2};x\right) $ \\
\hline
\end{tabular}%
}
\label{Table_3A}%
\end{table}%
\end{center}

\begin{theorem}
For $\ell \in
\mathbb{Z}
$ and $m=0,1,2,\ldots $, with $m\geq \ell $, the following reduction formula
holds true for $x\in
\mathbb{R}
$:%
\begin{eqnarray}
&&\mathrm{M}_{\ell /2,m+\left( 1-\ell \right) /2}\left( x\right)
\label{M_ml_resultado} \\
&=&\left( 2m-\ell +1\right) \binom{2m-\ell }{m}\left( -1\right) ^{m-\ell
}x^{\ell /2-m}  \notag \\
&&\left[ e^{x/2}\mathcal{P}\left( -\ell ,m-\ell ,-x\right) -e^{-x/2}\mathcal{%
P}\left( \ell ,m,x\right) \right] ,  \notag
\end{eqnarray}%
where we have set the polynomials:%
\begin{equation}
\mathcal{P}\left( s,k,z\right) =\sum_{n=0}^{k}\binom{k}{n}\left(
2k-s-n\right) !\,z^{n}.  \label{M_sum_def}
\end{equation}
\end{theorem}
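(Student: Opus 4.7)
The plan is to exploit the integral representation (\ref{M_k,mu_int_1}) of $\mathrm{M}_{\kappa,\mu}(x)$ specialized to $\kappa = \ell/2$ and $\mu = m+(1-\ell)/2$. Under this specialization $\mu+\kappa+1/2 = m+1$ and $\mu-\kappa+1/2 = m-\ell+1$ are positive integers (since $m\geq\ell$), so the Beta prefactor collapses to $\mathrm{B}(m+1,m-\ell+1) = m!\,(m-\ell)!/(2m-\ell+1)!$ and the representation reduces to
\[
\mathrm{M}_{\ell/2,m+(1-\ell)/2}(x) = (2m-\ell+1)\binom{2m-\ell}{m}\, x^{m+1-\ell/2}\,e^{-x/2} \int_0^1 e^{xt}\, t^{m-\ell}(1-t)^m\,dt.
\]
The content of the theorem is therefore the explicit closed-form evaluation of this elementary integral.

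To compute it, I would expand $(1-t)^m$ by the binomial theorem, reducing the task to the standard integrals $J_k(x) := \int_0^1 e^{xt} t^k\,dt$ for non-negative integer $k$, whose closed form via $k$-fold integration by parts is
\[
J_k(x) = \frac{(-1)^k\, k!}{x^{k+1}}\Bigl[\,e^x \sum_{i=0}^k \frac{(-x)^i}{i!} - 1\Bigr].
\]
Summing these against $\binom{m}{n}(-1)^n$ for $n=0,\dots,m$ and multiplying through by $e^{-x/2}$ splits the answer into an $e^{-x/2}$ part (from the $-1$ in each bracket) and an $e^{x/2}$ part (from the $e^x$ piece). Reindexing $k = m-n$ in the $e^{-x/2}$ sum produces directly the polynomial $-(-1)^{m-\ell}\mathcal{P}(\ell,m,x)$ with the stated prefactor, identifying one half of (\ref{M_ml_resultado}) via the definition (\ref{M_sum_def}).

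The main obstacle is the $e^{x/2}$ coefficient, which from this calculation appears as a double sum over $n$ and $i$ involving the partial exponentials $\sum_{i=0}^{m-\ell+n}(-x)^i/i!$; it does not obviously collapse to the single-indexed polynomial $\mathcal{P}(-\ell,m-\ell,-x)$. I would sidestep this combinatorial identity by evaluating the same integral a second way via the substitution $u = 1-t$, namely $\int_0^1 e^{xt} t^{m-\ell}(1-t)^m\,dt = e^x \int_0^1 e^{-xu} u^m (1-u)^{m-\ell}\,du$. Expanding $(1-u)^{m-\ell}$ and using the companion formula $\int_0^1 e^{-xu} u^k\,du = \frac{k!}{x^{k+1}}\bigl[1 - e^{-x}\sum_{i=0}^k x^i/i!\bigr]$, the role of the ``easy'' pieces is reversed: the ``$1$'' terms, once multiplied by the leading $e^x$ and by $e^{-x/2}$, yield the $e^{x/2}$ coefficient as a single sum which reindexes via $k = m-\ell-n$ to exactly $(-1)^{m-\ell}\mathcal{P}(-\ell,m-\ell,-x)$. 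Since both routes evaluate the same integral and its decomposition as $\alpha(x)e^x + \beta(x)$ with rational $\alpha,\beta$ is unique, combining the easy piece of each computation yields (\ref{M_ml_resultado}); the remaining labor is careful bookkeeping of the signs $(-1)^n$, powers of $x$, and binomial coefficients through the two reindexings.
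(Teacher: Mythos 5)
Your proposal is correct, but it follows a genuinely different route from the paper. The paper's proof stays entirely on the series side: it writes $\mathrm{M}_{\ell/2,m+(1-\ell)/2}(x)$ via the definition (\ref{M_k,mu_def}) as $x^{m+1-\ell/2}e^{-x/2}\,{}_1F_1(m+1-\ell;2(m+1)-\ell;x)$ and then invokes the tabulated reduction formula \cite[Eqn. 7.11.1(12)]{prudnikov1986integrals} for ${}_1F_1(n;m;z)$ with positive integers $n<m$, obtaining (\ref{M_ml_resultado}) ``after some algebra.'' You instead specialize the Euler-type integral representation (\ref{M_k,mu_int_1}) (the hypotheses $\mathrm{Re}\,(\mu\pm\kappa+\frac12)>0$ do hold here, since they become $m+1>0$ and $m-\ell+1>0$), reduce everything to the elementary integral $\int_0^1 e^{xt}t^{m-\ell}(1-t)^m\,dt$, and evaluate it twice --- once directly and once after $u=1-t$ --- keeping only the single-sum (``easy'') piece from each computation. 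The gluing step is legitimate: since $e^x$ is transcendental over the field of rational functions, the decomposition of the integral as $\alpha(x)e^x+\beta(x)$ with $\alpha,\beta$ Laurent polynomials is unique, so the clean $e^x$-coefficient from the second route and the clean constant part from the first route must together constitute the answer; the reindexings $n\mapsto m-n$ and $n\mapsto m-\ell-n$ do produce exactly $-(-1)^{m-\ell}\mathcal{P}(\ell,m,x)$ and $(-1)^{m-\ell}\mathcal{P}(-\ell,m-\ell,-x)$ as defined in (\ref{M_sum_def}), and the prefactor $\mathrm{B}(m+1,m-\ell+1)^{-1}=(2m-\ell+1)\binom{2m-\ell}{m}$ matches. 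What your approach buys is self-containedness --- only integration by parts, the binomial theorem, and a transcendence observation, with the uniqueness trick neatly sidestepping the combinatorial collapse of the double sums that a single-route computation would require; what the paper's approach buys is brevity, at the cost of leaning on an external reduction formula and leaving the intermediate algebra implicit in (\ref{1F1_ml}).
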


\begin{proof}
According to the definition of $\mathrm{M}_{\kappa ,\mu }\left( x\right) $ (%
\ref{M_k,mu_def}), we have%
\begin{equation}
\mathrm{M}_{\ell /2,m+\left( 1-\ell \right) /2}\left( x\right) =x^{m+1-\ell
/2}e^{-x/2}\,_{1}F_{1}\left( \left.
\begin{array}{c}
m+1-\ell \\
2\left( m+1\right) -\ell%
\end{array}%
\right\vert x\right) .  \label{M_ml}
\end{equation}%
Applying the property \cite[Eqn. 18:5:1]{oldham2009atlas}
\begin{equation*}
\left( -x\right) _{n}=\left( -1\right) ^{n}\left( x-n+1\right) _{n},
\end{equation*}%
and the reduction formula \cite[Eqn. 7.11.1(12)]{prudnikov1986integrals}
\begin{eqnarray*}
&&_{1}F_{1}\left( \left.
\begin{array}{c}
n \\
m%
\end{array}%
\right\vert z\right) =\frac{\left( m-2\right) !\left( 1-m\right) _{n}}{%
\left( n-1\right) !}\,z^{1-m} \\
&&\quad \left\{ \sum_{k=0}^{m-n-1}\frac{\left( 1+n-m\right) _{k}}{k!\left(
2-m\right) _{k}}\,z^{k}-e^{z}\sum_{k=0}^{n-1}\frac{\left( 1-n\right) _{k}}{%
k!\left( 2-m\right) _{k}}\left( -z\right) ^{k}\right\} ,
\end{eqnarray*}%
where $n,m=1,2,\ldots $ and $m>n$, after some algebra, we arrive at%
\begin{eqnarray}
&&_{1}F_{1}\left( \left.
\begin{array}{c}
m+1-\ell \\
2\left( m+1\right) -\ell%
\end{array}%
\right\vert x\right)  \notag \\
&=&\left( 2m-\ell +1\right) \binom{2m-\ell }{m}\left( -1\right) ^{m+1-\ell
}x^{\ell -2m}  \label{1F1_ml} \\
&&\quad \left\{ \sum_{k=0}^{m}\binom{m}{k}\,\left( 2m-\ell -k\right)
!x^{k}-e^{x}\sum_{k=0}^{m-\ell }\binom{m-\ell }{k}\,\left( 2m-\ell -k\right)
!\left( -x\right) ^{k}\right\} .  \notag
\end{eqnarray}%
Insert (\ref{1F1_ml})\ in (\ref{M_ml})\ to obtain (\ref{M_ml_resultado}), as
we wanted to prove.
\end{proof}

In addition to (\ref{M_ml_resultado}), other reduction formulas for the
Whittaker function $\mathrm{M}_{\kappa ,\mu }\left( x\right) $ are presented
in Appendix \ref{Appendix_Reduction_Whittaker}. A large list of reduction
formulas for $\mathrm{M}_{\kappa ,\mu }\left( x\right) $ is available in
\cite{apelblat2021integral} and in other monographs dealing with the special
functions \cite%
{erdelyi1953bateman,slater1960confluent,whittaker1920course,olver2010nist,magnus2013formulas,buchholz1969confluent,gradstein2015table,prudnikov1986integrals,oldham2009atlas,brychkov2008handbook}%
.

\begin{theorem}
For $\ell \in
\mathbb{Z}
$ and $m=0,1,2,\ldots $, with $m\geq \ell $, the following reduction formula
holds true for $x\in
\mathbb{R}
$:%
\begin{eqnarray}
&&\left. \frac{\partial \mathrm{M}_{\kappa ,\mu }\left( x\right) }{\partial
\kappa }\right\vert _{\kappa =\ell /2,\mu =m+\left( 1-\ell \right) /2}
\label{DkMlm} \\
&=&\left( 2m-\ell +1\right) \binom{2m-\ell }{m}x^{\ell /2-m}e^{-x/2}  \notag
\\
&&\left\{ \left( -1\right) ^{m-\ell }\left( H_{m-\ell }-H_{m}\right) \left[
e^{x}\mathcal{P}\left( -\ell ,m-\ell ,-x\right) -\mathcal{P}\left( \ell
,m,x\right) \right] \right.  \notag \\
&&+\left. x^{2m+1-\ell }\left[ e^{x}\mathcal{F}\left( -\ell ,m-\ell
,-x\right) -\mathcal{F}\left( \ell ,m,x\right) \right] \right\} .  \notag
\end{eqnarray}
\end{theorem}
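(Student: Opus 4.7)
The plan is to combine three ingredients already established in the excerpt: the integral representation of the parameter derivative (\ref{DkM_I1}), the explicit evaluation of $I_{1}$ in (\ref{I1_ml_reduction}), and the reduction formula (\ref{M_ml_resultado}) for $\mathrm{M}_{\ell/2, m+(1-\ell)/2}(x)$. Specializing $\kappa = \ell/2$ and $\mu = m + (1-\ell)/2$, one computes
\[
\mu - \kappa + \tfrac{1}{2} = m + 1 - \ell,\qquad \mu + \kappa + \tfrac{1}{2} = m + 1,
\]
so that $\mathrm{B}(\mu+\kappa+\tfrac{1}{2}, \mu-\kappa+\tfrac{1}{2}) = \dfrac{m!\,(m-\ell)!}{(2m+1-\ell)!}$, and thus
\[
\frac{1}{\mathrm{B}(\mu+\kappa+\tfrac{1}{2}, \mu-\kappa+\tfrac{1}{2})} = (2m-\ell+1)\binom{2m-\ell}{m}.
\]
Also, using (\ref{Psi(n+1)}), the digamma prefactor in (\ref{DkM_I1}) becomes $\psi(m+1-\ell) - \psi(m+1) = H_{m-\ell} - H_m$.

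Next I would substitute the explicit expression (\ref{M_ml_resultado}) for the Whittaker function into the first summand of (\ref{DkM_I1}); this term carries the factor $H_{m-\ell}-H_m$ and, after pulling $e^{-x/2}$ outside, produces exactly
\[
(2m-\ell+1)\binom{2m-\ell}{m}(-1)^{m-\ell}\,x^{\ell/2-m}e^{-x/2}\bigl[e^{x}\mathcal{P}(-\ell,m-\ell,-x) - \mathcal{P}(\ell,m,x)\bigr]
\]
multiplied by $(H_{m-\ell}-H_m)$. For the second summand, I would use the fact that $x^{\mu+1/2} = x^{m+1-\ell/2} = x^{\ell/2-m}\cdot x^{2m+1-\ell}$, combine with the beta prefactor computed above, and insert (\ref{I1_ml_reduction}) for $I_{1}(\ell/2,\,m+(1-\ell)/2;\,x)$. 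This yields
\[
(2m-\ell+1)\binom{2m-\ell}{m}\,x^{\ell/2-m}e^{-x/2}\,x^{2m+1-\ell}\bigl[e^{x}\mathcal{F}(-\ell,m-\ell,-x) - \mathcal{F}(\ell,m,x)\bigr].
\]

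Finally, adding the two contributions and factoring the common prefactor $(2m-\ell+1)\binom{2m-\ell}{m}\,x^{\ell/2-m}e^{-x/2}$ out of both terms yields the stated formula (\ref{DkMlm}). The proof is essentially an exercise in substitution, so there is no serious mathematical obstacle; the main risk is bookkeeping. The only points that require care are (i) matching the sign $(-1)^{m-\ell}$ that appears in (\ref{M_ml_resultado}) with the $(H_{m-\ell}-H_m)$ factor, and (ii) correctly balancing the power $x^{\mu+1/2}$ against $x^{\ell/2-m}$ so that the residual $x^{2m+1-\ell}$ ends up multiplying the bracket containing $\mathcal{F}$ but not the bracket containing $\mathcal{P}$, as in (\ref{DkMlm}).
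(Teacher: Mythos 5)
Your proposal is correct and follows essentially the same route as the paper: specialize (\ref{DkM_I1}) at $\kappa=\ell/2$, $\mu=m+(1-\ell)/2$, convert the digamma difference to $H_{m-\ell}-H_m$ via (\ref{Psi(n+1)}), identify the reciprocal beta function with $(2m-\ell+1)\binom{2m-\ell}{m}$, and then insert (\ref{M_ml_resultado}) and (\ref{I1_ml_reduction}). The bookkeeping points you flag (the sign $(-1)^{m-\ell}$ and the split $x^{\mu+1/2}=x^{\ell/2-m}\cdot x^{2m+1-\ell}$) are exactly the only nontrivial checks, and you handle them correctly.
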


\begin{proof}
According to (\ref{DkM_I1}), we have%
\begin{eqnarray*}
&&\left. \frac{\partial \mathrm{M}_{\kappa ,\mu }\left( x\right) }{\partial
\kappa }\right\vert _{\kappa =\ell /2,\mu =m+\left( 1-\ell \right) /2} \\
&=&\left[ \psi \left( m-\ell +1\right) -\psi \left( m+1\right) \right]
\mathrm{M}_{\ell /2,m+\left( 1-\ell \right) /2}\left( x\right) \\
&&+\frac{\,x^{m+1+\ell /2}e^{-x/2}}{\mathrm{B}\left( m+1,m-\ell +1\right) }%
I_{1}\left( \frac{\ell }{2},m+\frac{1-\ell }{2};x\right) .
\end{eqnarray*}%
Now, apply (\ref{Beta_def})\ and the property (\ref{Psi(n+1)})\ to get
\begin{eqnarray*}
&&\left. \frac{\partial \mathrm{M}_{\kappa ,\mu }\left( x\right) }{\partial
\kappa }\right\vert _{\kappa =\ell /2,\mu =m+\left( 1-\ell \right) /2} \\
&=&\left( H_{m-\ell }-H_{m}\right) \mathrm{M}_{\ell /2,m+\left( 1-\ell
\right) /2}\left( x\right) \\
&&+\left( 2m-\ell +1\right) \binom{2m-\ell }{m}x^{m+1-\ell
/2}e^{-x/2}I_{1}\left( \frac{\ell }{2},m+\frac{1-\ell }{2};x\right) .
\end{eqnarray*}%
Finally, applying the results given in (\ref{I1_ml_reduction})\ and (\ref%
{M_ml_resultado}), we arrive at (\ref{DkMlm}), as we wanted to prove.
\end{proof}

\begin{corollary}
For $\ell \in
\mathbb{Z}
$ and $m=0,1,2,\ldots $, with $m\geq \ell $, the following reduction formula
holds true for $x\in
\mathbb{R}
$:%
\begin{eqnarray}
&&G^{\left( 1\right) }\left( \left.
\begin{array}{c}
m+1-\ell \\
2\left( m+1\right) -\ell%
\end{array}%
\right\vert x\right)  \label{G(1)_ml_resultado} \\
&=&\left( 2m-\ell +1\right) \binom{2m-\ell }{m}  \notag \\
&&\left\{ \left( -1\right) ^{m-\ell }x^{\ell -2m-1}\left( H_{m-\ell
}-H_{m}\right) \left[ \mathcal{P}\left( \ell ,m,x\right) -e^{x}\mathcal{P}%
\left( -\ell ,m-\ell ,-x\right) \right] \right.  \notag \\
&&+\left. \mathcal{F}\left( \ell ,m,x\right) -e^{x}\mathcal{F}\left( -\ell
,m-\ell ,-x\right) \right\} .  \notag
\end{eqnarray}
\end{corollary}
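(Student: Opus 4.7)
The proof will be an entirely mechanical consequence of two previously established facts: the differentiation identity (\ref{DkM_G(1)}), which expresses $\partial \mathrm{M}_{\kappa,\mu}/\partial \kappa$ in terms of $G^{(1)}$, and the reduction formula (\ref{DkMlm}), which gives $\partial \mathrm{M}_{\kappa,\mu}/\partial \kappa$ explicitly at $\kappa=\ell/2$, $\mu=m+(1-\ell)/2$. The plan is simply to equate these two expressions and solve for $G^{(1)}$.

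First I would specialize (\ref{DkM_G(1)}) by substituting $\kappa=\ell/2$ and $\mu=m+(1-\ell)/2$. A quick calculation gives the parameter identifications $\mu+\tfrac12=m+1-\tfrac{\ell}{2}$, $\tfrac12+\mu-\kappa=m+1-\ell$, and $1+2\mu=2(m+1)-\ell$, so that
\begin{equation*}
\left.\frac{\partial \mathrm{M}_{\kappa,\mu}(x)}{\partial \kappa}\right|_{\kappa=\ell/2,\ \mu=m+(1-\ell)/2}
= -\,x^{m+1-\ell/2}\,e^{-x/2}\,G^{(1)}\!\left(\left.\begin{array}{c} m+1-\ell \\ 2(m+1)-\ell \end{array}\right|x\right).
\end{equation*}
Solving for $G^{(1)}$ yields $G^{(1)} = -x^{\ell/2-m-1}e^{x/2}\cdot(\partial \mathrm{M}_{\kappa,\mu}/\partial\kappa)|_{\kappa=\ell/2,\,\mu=m+(1-\ell)/2}$.

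Next I would substitute the explicit expression from (\ref{DkMlm}) for the right-hand side. The prefactor $x^{\ell/2-m}e^{-x/2}$ present in (\ref{DkMlm}) combines with $-x^{\ell/2-m-1}e^{x/2}$ to produce an overall factor $-x^{\ell-2m-1}$ multiplying the braces. Distributing this factor: the first term inside the braces, already having no extra powers of $x$, picks up the factor $x^{\ell-2m-1}$; the second term, carrying $x^{2m+1-\ell}$, becomes dimensionless (a factor of $1$). The overall minus sign is then absorbed by swapping the two terms inside each bracket $[\,\cdot\,]$, i.e. $e^{x}\mathcal{P}(-\ell,m-\ell,-x)-\mathcal{P}(\ell,m,x)\mapsto \mathcal{P}(\ell,m,x)-e^{x}\mathcal{P}(-\ell,m-\ell,-x)$, and similarly for $\mathcal{F}$. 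This produces exactly (\ref{G(1)_ml_resultado}).

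There is no genuine obstacle here; the only thing that requires care is bookkeeping of signs and powers of $x$, together with keeping straight which arguments of $\mathcal{P}$ and $\mathcal{F}$ appear with $x$ versus $-x$. Since (\ref{DkMlm}) was already derived using both (\ref{I1_ml_reduction}) and (\ref{M_ml_resultado}), the present corollary is purely a repackaging of that theorem via the identity (\ref{DkM_G(1)}).
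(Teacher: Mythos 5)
Your proposal is correct and is exactly the paper's own argument: specialize (\ref{DkM_G(1)}) at $\kappa=\ell/2$, $\mu=m+(1-\ell)/2$, compare with (\ref{DkMlm}), and solve for $G^{(1)}$. The bookkeeping of the prefactor $-x^{\ell-2m-1}$ and the sign swaps inside the brackets checks out.
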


\begin{proof}
Set (\ref{DkM_G(1)})\ for $\kappa =\frac{\ell }{2}$ and $\mu =m+\frac{1-\ell
}{2}$ and compare the result to (\ref{DkMlm}).
\end{proof}

Table \ref{Table_3}\ shows the first derivative of $\mathrm{M}_{\kappa ,\mu
}\left( x\right) $ with respect to parameter $\kappa $ for some particular
values of $\kappa $ and $\mu $, and $x\in
\mathbb{R}
$, which has been calculated from (\ref{DkMlm}) and are not contained in
Table \ref{Table_1}.

\begin{center}
\begin{table}[htbp] \centering%
\caption{Derivative of $M_{\kappa,\mu}$ with respect
to $\kappa$ by using (\ref{DkMlm}).}%
\rotatebox{90}{
\begin{tabular}{|c|c|c|}
\hline
$\kappa $ & $\mu $ & $\frac{\partial \mathrm{M}_{\kappa ,\mu }\left(
x\right) }{\partial \kappa }$ \\ \hline\hline
$-\frac{3}{2}$ & $2$ & $%
\begin{array}{l}
-\frac{4}{x^{3/2}}\left\{ e^{x/2}\left[ \left( x^{3}-3x^{2}+6x-6\right)
\left( \mathrm{Shi}\left( x\right) -\mathrm{Chi}\left( x\right) +\ln
x+\gamma \right) -\frac{11}{6}x^{3}+\frac{15}{2}x^{2}-15x+11\right] \right.
\\
\quad +\left. e^{-x/2}\left[ 6\left( \mathrm{Chi}\left( x\right) +\mathrm{Shi%
}\left( x\right) -\ln x-\gamma \right) -x^{2}+4x-11\right] \right\}%
\end{array}%
$ \\ \hline
$-1$ & $\frac{3}{2}$ & $%
\begin{array}{l}
\frac{3}{2x}\left\{ e^{x/2}\left[ \left( 2x^{2}-4x+4\right) \left( \mathrm{%
Chi}\left( x\right) -\mathrm{Shi}\left( x\right) -\ln x-\gamma \right)
+3x^{2}-8x+6\right] \right. \\
\quad +\left. 2\,e^{-x/2}\left[ 2\,\mathrm{Chi}\left( x\right) +2\,\mathrm{%
Shi}\left( x\right) +x-2\ln x-2\gamma -3\right] \right\}%
\end{array}%
$ \\ \hline
$-\frac{1}{2}$ & $1$ & $%
\begin{array}{l}
\frac{2}{\sqrt{x}}\left\{ e^{x/2}\left( x-1\right) \left( \mathrm{Chi}\left(
x\right) -\mathrm{Shi}\left( x\right) -\ln x-\gamma +1\right) \right. \\
\quad +\left. e^{-x/2}\left( \ln x-\mathrm{Chi}\left( x\right) -\mathrm{Shi}%
\left( x\right) +\gamma +1\right) \right\}%
\end{array}%
$ \\ \hline
$-\frac{1}{2}$ & $2$ & $%
\begin{array}{l}
\frac{6}{x^{3/2}}\left\{ e^{x/2}\left[ \left( x^{2}-4x+6\right) \left( 2\,%
\mathrm{Chi}\left( x\right) -2\,\mathrm{Shi}\left( x\right) -2\ln x-2\gamma
+3\right) -12\right] \right. \\
\quad +\left. e^{-x/2}\left[ 6\left( x-1\right) -4\left( x+3\right) \left(
\ln x-\mathrm{Chi}\left( x\right) -\mathrm{Shi}\left( x\right) +\gamma
\right) \right] \right\}%
\end{array}%
$ \\ \hline
$0$ & $\frac{3}{2}$ & $%
\begin{array}{l}
\frac{6}{x}\left\{ e^{x/2}\left[ \left( x-2\right) \left( \mathrm{Chi}\left(
x\right) -\mathrm{Shi}\left( x\right) -\ln x-\gamma \right) +x\right] \right.
\\
\quad +\left. e^{-x/2}\left[ \left( x+2\right) \left( \ln x-\mathrm{Chi}%
\left( x\right) -\mathrm{Shi}\left( x\right) +\gamma \right) -x\right]
\right\}%
\end{array}%
$ \\ \hline
$\frac{1}{2}$ & $2$ & $%
\begin{array}{l}
\frac{6}{x^{3/2}}\left\{ e^{x/2}\left[ 6\left( x+1\right) -4\left(
x-3\right) \left( \ln x+\mathrm{Shi}\left( x\right) -\mathrm{Chi}\left(
x\right) +\gamma \right) \right] \right. \\
\quad +\left. e^{-x/2}\left[ \left( x^{2}+4x+6\right) \left( 2\ln x-2\,%
\mathrm{Chi}\left( x\right) -2\,\mathrm{Shi}\left( x\right) +2\gamma
-3\right) +12\right] \right\}%
\end{array}%
$ \\ \hline
\end{tabular}%
}
\label{Table_3}%
\end{table}%
\end{center}

\subsection{Application to the calculation of infinite integrals}

Additional integral representations of the Whittaker function $\mathrm{M}%
_{\kappa ,\mu }\left( x\right) $ in terms of Bessel functions \cite[Sect.
6.5.1]{magnus2013formulas} are known:%
\begin{eqnarray}
&&\mathrm{M}_{\kappa ,\mu }\left( x\right)  \notag \\
&=&\frac{\Gamma \left( 1+2\mu \right) \,x^{1/2}e^{-x/2}}{\Gamma \left( \mu
-\kappa +\frac{1}{2}\right) }\int_{0}^{\infty }e^{-t}t^{-\kappa -1/2}I_{2\mu
}\left( 2\sqrt{xt}\right) dt  \label{Int_M_BesselI} \\
&=&\frac{\Gamma \left( 1+2\mu \right) \,x^{1/2}e^{x/2}}{\Gamma \left( \mu
+\kappa +\frac{1}{2}\right) }\int_{0}^{\infty }e^{-t}t^{\kappa -1/2}J_{2\mu
}\left( 2\sqrt{xt}\right) dt  \label{Int_M_BesselJ} \\
&&\mathrm{Re}\left( -\frac{1}{2}-\mu +\kappa \right) >0.  \notag
\end{eqnarray}

Let us introduce the following infinite logarithmic integrals.

\begin{definition}
\begin{eqnarray}
\mathcal{H}_{1}\left( \kappa ,\mu ;x\right) &=&\int_{0}^{\infty
}e^{-t}t^{-\kappa -1/2}I_{2\mu }\left( 2\sqrt{xt}\right) \ln t\,dt,
\label{H1_def} \\
\mathcal{H}_{2}\left( \kappa ,\mu ;x\right) &=&\int_{0}^{\infty
}e^{-t}t^{\kappa -1/2}J_{2\mu }\left( 2\sqrt{xt}\right) \ln t\,dt.
\label{H2_def}
\end{eqnarray}
\end{definition}

Differentiation of (\ref{Int_M_BesselI}) and (\ref{Int_M_BesselJ})\ with
respect to parameter $\kappa $ yields respectively%
\begin{eqnarray}
&&\frac{\partial \mathrm{M}_{\kappa ,\mu }\left( x\right) }{\partial \kappa }
\notag \\
&=&\psi \left( \mu -\kappa +\frac{1}{2}\right) \mathrm{M}_{\kappa ,\mu
}\left( x\right) -\frac{\Gamma \left( 1+2\mu \right) \,x^{1/2}e^{-x/2}}{%
\Gamma \left( \mu -\kappa +\frac{1}{2}\right) }\mathcal{H}_{1}\left( \kappa
,\mu ;x\right)  \label{DkM_H1} \\
&=&-\psi \left( \mu +\kappa +\frac{1}{2}\right) \mathrm{M}_{\kappa ,\mu
}\left( x\right) +\frac{\Gamma \left( 1+2\mu \right) \,x^{1/2}e^{x/2}}{%
\Gamma \left( \mu +\kappa +\frac{1}{2}\right) }\mathcal{H}_{2}\left( \kappa
,\mu ;x\right) .  \label{DkM_H2}
\end{eqnarray}

Note that, from (\ref{DkM_I1})\ and (\ref{DkM_H1})\ we have%
\begin{eqnarray}
&&\mathcal{H}_{1}\left( \kappa ,\mu ;x\right)  \label{H1->I1} \\
&=&\frac{\Gamma \left( \mu -\kappa +\frac{1}{2}\right) \psi \left( \mu
+\kappa +\frac{1}{2}\right) }{\Gamma \left( 1+2\mu \right) \sqrt{x}e^{x/2}}%
\mathrm{M}_{\kappa ,\mu }\left( x\right) -\frac{x^{\mu }\,I_{1}\left( \kappa
,\mu ;x\right) }{\Gamma \left( \mu +\kappa +\frac{1}{2}\right) },  \notag
\end{eqnarray}%
and from (\ref{DkM_I1})\ and (\ref{DkM_H2}),
\begin{eqnarray}
&&\mathcal{H}_{2}\left( \kappa ,\mu ;x\right)  \label{H2->I1} \\
&=&\frac{\Gamma \left( \mu +\kappa +\frac{1}{2}\right) \psi \left( \mu
-\kappa +\frac{1}{2}\right) }{\Gamma \left( 1+2\mu \right) \sqrt{x}e^{x/2}}%
\mathrm{M}_{\kappa ,\mu }\left( x\right) +\frac{e^{-x}x^{\mu }\,I_{1}\left(
\kappa ,\mu ;x\right) }{\Gamma \left( \mu -\kappa +\frac{1}{2}\right) }.
\notag
\end{eqnarray}

\begin{corollary}
For $\ell \in
\mathbb{Z}
$ and $m=0,1,2,\ldots $, with $m\geq \ell $, the following infinite
integrals holds true for $x\in
\mathbb{R}
$:%
\begin{eqnarray}
&&\int_{0}^{\infty }\frac{e^{-t}\ln t}{t^{\left( 1+\ell \right) /2}}%
\,I_{2m+1-\ell }\left( 2\sqrt{xt}\right) \,dt  \label{H1lm_resultado} \\
&=&\mathcal{H}_{1}\left( \frac{\ell }{2},m+\frac{1-\ell }{2};x\right)  \notag
\\
&=&\frac{1}{m!}\left\{ \left( -1\right) ^{m-\ell }\left( H_{m}-\gamma
\right) x^{-m+\left( \ell -1\right) /2}\left[ e^{x}\mathcal{P}\left( -\ell
,m-\ell ,-x\right) -\mathcal{P}\left( \ell ,m,x\right) \right] \right.
\notag \\
&&\quad -\left. x^{m+\left( 1-\ell \right) /2}\left[ e^{x}\mathcal{F}\left(
-\ell ,m-\ell ,-x\right) -\mathcal{F}\left( \ell ,m,x\right) \right]
\right\} .  \notag
\end{eqnarray}%
and%
\begin{eqnarray}
&&\int_{0}^{\infty }\frac{e^{-t}\ln t}{t^{\left( 1-\ell \right) /2}}%
J_{2m+1-\ell }\left( 2\sqrt{xt}\right) dt  \label{H2lm_resultado} \\
&=&\mathcal{H}_{2}\left( \frac{\ell }{2},m+\frac{1-\ell }{2};x\right)  \notag
\\
&=&\frac{1}{\left( m-\ell \right) !}\left\{ \left( -1\right) ^{m-\ell
}\left( H_{m-\ell }-\gamma \right) x^{-m+\left( \ell -1\right) /2}\left[
\mathcal{P}\left( -\ell ,m-\ell ,-x\right) -e^{-x}\mathcal{P}\left( \ell
,m,x\right) \right] \right.  \notag \\
&&\quad +\left. x^{m+\left( 1-\ell \right) /2}\left[ \mathcal{F}\left( -\ell
,m-\ell ,-x\right) -e^{-x}\mathcal{F}\left( \ell ,m,x\right) \right]
\right\} .  \notag
\end{eqnarray}
\end{corollary}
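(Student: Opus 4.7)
The plan is to specialize the two relations (\ref{H1->I1}) and (\ref{H2->I1}), which express $\mathcal{H}_{1}(\kappa,\mu;x)$ and $\mathcal{H}_{2}(\kappa,\mu;x)$ in terms of $\mathrm{M}_{\kappa,\mu}(x)$ and the finite logarithmic integral $I_{1}(\kappa,\mu;x)$, to the parameter values $\kappa=\ell/2$ and $\mu=m+(1-\ell)/2$. Both ingredients on the right-hand side of those relations are already available in closed form at these parameters: the Whittaker function is reduced by (\ref{M_ml_resultado}) and the integral $I_{1}$ by (\ref{I1_ml_reduction}). The proof is therefore just substitution and algebraic rearrangement; no new analytic input is required.

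First I would simplify the gamma- and digamma-function arguments produced by the substitution. Under $\kappa=\ell/2$, $\mu=m+(1-\ell)/2$, these arguments collapse to integers: $\mu-\kappa+\tfrac{1}{2}=m-\ell+1$, $\mu+\kappa+\tfrac{1}{2}=m+1$, and $1+2\mu=2m-\ell+2$, all non-negative thanks to the hypothesis $m\geq\ell$. Hence all $\Gamma$-values become factorials and (\ref{Psi(n+1)}) replaces $\psi(m+1)=H_{m}-\gamma$ and $\psi(m-\ell+1)=H_{m-\ell}-\gamma$. The combinatorial prefactor $(2m-\ell+1)\binom{2m-\ell}{m}=(2m-\ell+1)!/[m!\,(m-\ell)!]$ coming from (\ref{M_ml_resultado}) then cancels cleanly against the factorial ratios $\Gamma(m-\ell+1)/\Gamma(2m-\ell+2)$ from (\ref{H1->I1}) and $\Gamma(m+1)/\Gamma(2m-\ell+2)$ from (\ref{H2->I1}), yielding the $1/m!$ prefactor in (\ref{H1lm_resultado}) and the $1/(m-\ell)!$ prefactor in (\ref{H2lm_resultado}), respectively.

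The remaining task is to combine the $x^{1/2}$ and $x^{\mu}$ coefficients with the $x^{\ell/2-m}$ inside (\ref{M_ml_resultado}) --- producing exactly the two external powers $x^{(\ell-1)/2-m}$ and $x^{m+(1-\ell)/2}$ displayed in the corollary --- and to track the $e^{\pm x/2}$ factors so that, after combination with the $e^{x/2}$ and $e^{-x/2}$ appearing inside the bracket of (\ref{M_ml_resultado}) and with the $e^{x}$ inside the bracket of (\ref{I1_ml_reduction}), one recovers the $e^{x}$ (respectively $e^{-x}$) weights sitting in front of $\mathcal{P}(-\ell,m-\ell,-x)$ and $\mathcal{F}(-\ell,m-\ell,-x)$ in the final formulas. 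The integral representations on the left-hand sides of (\ref{H1lm_resultado})--(\ref{H2lm_resultado}) are then read off from the defining formulas (\ref{H1_def})--(\ref{H2_def}) by noting $-\kappa-\tfrac{1}{2}=-(\ell+1)/2$, $\kappa-\tfrac{1}{2}=(\ell-1)/2$ and $2\mu=2m+1-\ell$.

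I do not expect any genuine analytic obstacle: the entire argument is a careful bookkeeping of signs, $(-1)^{m-\ell}$ factors, factorial identities and exponential weights. The main hazard is purely clerical --- namely, making sure that the four-term combination on the right-hand side assembles in exactly the order stated (with the $\mathcal{P}$-block reflecting the $\psi$-contribution to $\partial_{\kappa}\mathrm{M}_{\kappa,\mu}$ and the $\mathcal{F}$-block reflecting the $I_{1}$-contribution) and that the exponentials coming from the three distinct sources combine to give precisely $e^{x}$ in the first summand and $e^{-x}$ in the $\mathcal{H}_{2}$ expression.
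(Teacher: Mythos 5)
Your proposal is correct and is exactly the paper's proof: the authors likewise obtain the corollary by substituting the reductions (\ref{M_ml_resultado}) and (\ref{I1_ml_reduction}) into the relations (\ref{H1->I1}) and (\ref{H2->I1}) at $\kappa=\ell/2$, $\mu=m+(1-\ell)/2$, and applying (\ref{Psi(n+1)}). Your bookkeeping of the factorial cancellations giving the $1/m!$ and $1/(m-\ell)!$ prefactors, the powers of $x$, and the exponential weights is all consistent with the stated result.
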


\begin{proof}
Substitute in (\ref{H1->I1})\ and (\ref{H2->I1}) the results given in (\ref%
{I1_ml_reduction})\ and (\ref{M_ml_resultado}), and apply (\ref{Psi(n+1)}).
\end{proof}

\subsection{Derivative with respect to the second parameter $\partial
\mathrm{M}_{\protect\kappa ,\protect\mu }\left( x\right) /\partial \protect%
\mu $}

In order to calculate the first derivative of $\mathrm{M}_{\kappa ,\mu
}\left( x\right) $ with respect to parameter $\mu $, let us introduce the
following finite logarithmic integrals.

\begin{definition}
\begin{eqnarray}
J_{1}\left( \kappa ,\mu ;x\right) &=&\int_{0}^{1}e^{xt}t^{\mu -\kappa
-1/2}\left( 1-t\right) ^{\mu +\kappa -1/2}\ln \left[ t\left( 1-t\right) %
\right] dt,  \label{J1_def} \\
J_{2}\left( \kappa ,\mu ;x\right) &=&\int_{0}^{1}e^{-xt}t^{\mu +\kappa
-1/2}\left( 1-t\right) ^{\mu -\kappa -1/2}\ln \left[ t\left( 1-t\right) %
\right] dt,  \label{J2_def} \\
J_{3}\left( \kappa ,\mu ;x\right) &=&\int_{-1}^{1}e^{xt/2}\left( 1+t\right)
^{\mu -\kappa -1/2}\left( 1-t\right) ^{\mu +\kappa -1/2}\ln \left(
1-t^{2}\right) dt,  \label{J3_def} \\
J_{4}\left( \kappa ,\mu ;x\right) &=&\int_{-1}^{1}e^{-xt/2}\left( 1+t\right)
^{\mu +\kappa -1/2}\left( 1-t\right) ^{\mu -\kappa -1/2}\ln \left(
1-t^{2}\right) dt.  \label{J4_def}
\end{eqnarray}
\end{definition}

Differentiation of (\ref{M_k,mu_int_1}) and (\ref{M_k,mu_int_2})\ with
respect to parameter $\mu $ gives%
\begin{eqnarray}
&&\frac{\partial \mathrm{M}_{\kappa ,\mu }\left( x\right) }{\partial \mu }
\notag \\
&=&\left[ \ln x-\psi \left( \mu -\kappa +\frac{1}{2}\right) -\psi \left( \mu
+\kappa +\frac{1}{2}\right) +2\,\psi \left( 2\mu +1\right) \right] \mathrm{M}%
_{\kappa ,\mu }\left( x\right)  \notag \\
&&+\frac{x^{\mu +1/2}e^{-x/2}}{\mathrm{B}\left( \mu +\kappa +\frac{1}{2},\mu
-\kappa +\frac{1}{2}\right) }J_{1}\left( \kappa ,\mu ;x\right)
\label{DmM_J1} \\
&=&\left[ \ln x-\psi \left( \mu -\kappa +\frac{1}{2}\right) -\psi \left( \mu
+\kappa +\frac{1}{2}\right) +2\,\psi \left( 2\mu +1\right) \right] \mathrm{M}%
_{\kappa ,\mu }\left( x\right)  \notag \\
&&+\frac{x^{\mu +1/2}e^{x/2}}{\mathrm{B}\left( \mu +\kappa +\frac{1}{2},\mu
-\kappa +\frac{1}{2}\right) }J_{2}\left( \kappa ,\mu ;x\right).
\label{DmM_J2}
\end{eqnarray}

For the other integral representations given in (\ref{M_k,mu_int_3})\ and (%
\ref{M_k,mu_int_4}), we have%
\begin{eqnarray}
&&\frac{\partial \mathrm{M}_{\kappa ,\mu }\left( x\right) }{\partial \mu }
\notag \\
&=&\left[ \ln \left( x/4\right) -\psi \left( \mu -\kappa +\frac{1}{2}\right)
-\psi \left( \mu +\kappa +\frac{1}{2}\right) +2\,\psi \left( 2\mu +1\right) %
\right] \mathrm{M}_{\kappa ,\mu }\left( x\right)  \notag \\
&&+\frac{2^{-2\mu }\,x^{\mu +1/2}}{\mathrm{B}\left( \mu +\kappa +\frac{1}{2}%
,\mu -\kappa +\frac{1}{2}\right) }J_{3}\left( \kappa ,\mu ;x\right)
\label{DmM_J3} \\
&=&\left[ \ln \left( x/4\right) -\psi \left( \mu -\kappa +\frac{1}{2}\right)
-\psi \left( \mu +\kappa +\frac{1}{2}\right) +2\,\psi \left( 2\mu +1\right) %
\right] \mathrm{M}_{\kappa ,\mu }\left( x\right)  \notag \\
&&+\frac{2^{-2\mu }\,x^{\mu +1/2}}{\mathrm{B}\left( \mu +\kappa +\frac{1}{2}%
,\mu -\kappa +\frac{1}{2}\right) }J_{4}\left( \kappa ,\mu ;x\right).
\label{DmM_J4}
\end{eqnarray}

From (\ref{DmM_J1})-(\ref{DmM_J4}), we obtain the following
interrelationships:\
\begin{eqnarray*}
J_{2}\left( \kappa ,\mu ;x\right) &=&e^{-x}J_{1}\left( \kappa ,\mu ;x\right)
, \\
J_{3}\left( \kappa ,\mu ;x\right) &=&2^{2\mu }\left[ e^{-x/2}J_{1}\left(
\kappa ,\mu ;x\right) +\frac{\ln 4}{x^{\mu +1/2}}\mathrm{B}\left( \mu
+\kappa +\frac{1}{2},\mu -\kappa +\frac{1}{2}\right) \,\mathrm{M}_{\kappa
,\mu }\left( x\right) \right] , \\
J_{4}\left( \kappa ,\mu ;x\right) &=&J_{3}\left( \kappa ,\mu ;x\right) .
\end{eqnarray*}

Since $\,J_{2}\left( \kappa ,\mu ;x\right) $, $\,J_{3}\left( \kappa ,\mu
;x\right) $, and $\,J_{4}\left( \kappa ,\mu ;x\right) $ are reduced to the
calculation of $\,J_{1}\left( \kappa ,\mu ;x\right) $, next we calculate the
latter integral.

\begin{theorem}
According to the notation introduced in (\ref{G(1)_def})\ and (\ref{H(1)_def}%
), the following integral holds true:%
\begin{eqnarray}
&&J_{1}\left( \kappa ,\mu ;x\right)  \label{J1_general} \\
&=&\mathrm{B}\left( \mu +\kappa +\frac{1}{2},\mu -\kappa +\frac{1}{2}\right)
\notag \\
&&\left\{ \left[ \psi \left( \frac{1}{2}+\mu +\kappa \right) +\psi \left(
\frac{1}{2}+\mu -\kappa \right) -2\,\psi \left( 2\mu +1\right) \right]
\,_{1}F_{1}\left( \left.
\begin{array}{c}
\frac{1}{2}+\mu -\kappa \\
1+2\mu%
\end{array}%
\right\vert x\right) \right.  \notag \\
&&\left. +G^{\left( 1\right) }\left( \left.
\begin{array}{c}
\frac{1}{2}+\mu -\kappa \\
1+2\mu%
\end{array}%
\right\vert x\right) +2\,H^{\left( 1\right) }\left( \left.
\begin{array}{c}
\frac{1}{2}+\mu -\kappa \\
1+2\mu%
\end{array}%
\right\vert x\right) \right\} .  \notag
\end{eqnarray}
\end{theorem}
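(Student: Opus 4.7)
The plan is to derive (\ref{J1_general}) by equating the two independent expressions for $\partial \mathrm{M}_{\kappa,\mu}(x)/\partial \mu$ that are already available: the one in (\ref{DmM_J1}), obtained by direct differentiation under the integral sign of the representation (\ref{M_k,mu_int_1}), and the one in (\ref{DmM_H(1)}), obtained by term-by-term differentiation of the series (\ref{M_k,mu_def}) in terms of the auxiliary functions $G^{(1)}$ and $H^{(1)}$. This is exactly analogous to the derivation of (\ref{I1_general}) from the comparison of (\ref{DkM_I1}) with (\ref{DkM_G(1)}).

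First I would write out (\ref{DmM_J1}) and (\ref{DmM_H(1)}) side by side and subtract one from the other; the left-hand sides cancel identically, leaving an algebraic identity that involves $M_{\kappa,\mu}(x)$, the two parameter derivatives $G^{(1)}$ and $H^{(1)}$, and the integral $J_{1}(\kappa,\mu;x)$. Next I would isolate $J_{1}(\kappa,\mu;x)$ by multiplying through by $\mathrm{B}(\mu+\kappa+\tfrac{1}{2},\mu-\kappa+\tfrac{1}{2})\,x^{-\mu-1/2}e^{x/2}$, which both removes the prefactor in front of $J_{1}$ and converts $\mathrm{M}_{\kappa,\mu}(x)$ into $_{1}F_{1}\left(\tfrac{1}{2}+\mu-\kappa;1+2\mu;x\right)$ by virtue of (\ref{M_k,mu_def}). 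The $\ln x$ contributions from the two expressions cancel against each other, and what survives multiplying $_{1}F_{1}$ is exactly the combination $\psi(\tfrac{1}{2}+\mu+\kappa)+\psi(\tfrac{1}{2}+\mu-\kappa)-2\psi(2\mu+1)$ required by the statement.

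The calculation is essentially bookkeeping of signs and digamma coefficients, with no analytic obstacle: the main thing to check carefully is that the digamma combination $-\psi(\mu-\kappa+\tfrac{1}{2})-\psi(\mu+\kappa+\tfrac{1}{2})+2\psi(2\mu+1)$ appearing as the coefficient of $\mathrm{M}_{\kappa,\mu}(x)$ in (\ref{DmM_J1}) is moved to the other side with a global sign change, which produces the bracketed factor in (\ref{J1_general}). Once the algebra is done, the identity holds in the same domain $\mathrm{Re}(\mu\pm\kappa+\tfrac{1}{2})>0$ where the integral representation (\ref{M_k,mu_int_1}) and hence the differentiation under the integral sign are valid, so no additional analytic justification beyond that already used for (\ref{DmM_J1}) is needed.
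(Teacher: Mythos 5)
Your proposal is correct and coincides with the paper's own proof, which likewise obtains (\ref{J1_general}) by comparing (\ref{DmM_J1}) with (\ref{DmM_H(1)}) and using (\ref{M_k,mu_def}) to convert $\mathrm{M}_{\kappa,\mu}(x)$ into the $_{1}F_{1}$ factor. The sign bookkeeping you describe (the cancellation of the $\ln x$ terms and the sign flip of the digamma combination upon isolating $J_{1}$) is exactly what the paper's one-line comparison amounts to.
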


\begin{proof}
Comparing (\ref{DmM_J1})\ to (\ref{DmM_H(1)}), taking into account (\ref%
{M_k,mu_def}), we arrive at (\ref{J1_general}), as we wanted to prove.
\end{proof}

\begin{theorem}
For $\ell \in
\mathbb{Z}
$ and $m=0,1,2,\ldots $, with $m\geq \ell $, the following integral holds
true for $x\in
\mathbb{R}
$:%
\begin{equation}
J_{1}\left( \frac{\ell }{2},m+\frac{1-\ell }{2};x\right) =e^{x}\mathcal{F}%
\left( -\ell ,m-\ell ,-x\right) +\mathcal{F}\left( \ell ,m,x\right) .
\label{J1_ml_resultado}
\end{equation}
\end{theorem}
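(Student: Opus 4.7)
The plan is to mirror the proof of the sibling identity (\ref{I1_ml_reduction}) almost verbatim. The only structural difference between $I_1$ and $J_1$ is the logarithmic factor: $I_1$ carries $\ln\bigl((1-t)/t\bigr) = \ln(1-t) - \ln t$, while $J_1$ carries $\ln[t(1-t)] = \ln(1-t) + \ln t$. So I expect the same intermediate pieces to appear, differing only by the sign with which they are combined.

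First I would split
\[
J_{1}(\kappa,\mu;x) = \int_{0}^{1} e^{xt} t^{\mu-\kappa-1/2}(1-t)^{\mu+\kappa-1/2}\ln t\,dt + \int_{0}^{1} e^{xt} t^{\mu-\kappa-1/2}(1-t)^{\mu+\kappa-1/2}\ln(1-t)\,dt.
\]
The first integral is by definition $\mathcal{I}_{1}(\kappa,\mu;x)$, with $\mathcal{I}_{1}$ as introduced in (\ref{I1_calligrafic_def}). For the second, the substitution $\tau = 1-t$, performed exactly as in the proof of (\ref{I1_ml_reduction}), converts it into
\[
e^{x}\int_{0}^{1} e^{-x\tau}\tau^{\mu+\kappa-1/2}(1-\tau)^{\mu-\kappa-1/2}\ln\tau\,d\tau = e^{x}\,\mathcal{I}_{1}(-\kappa,\mu;-x).
\]
Combining these gives
\[
J_{1}(\kappa,\mu;x) = \mathcal{I}_{1}(\kappa,\mu;x) + e^{x}\,\mathcal{I}_{1}(-\kappa,\mu;-x),
\]
which differs from the corresponding decomposition (\ref{I1_F}) of $I_{1}$ only by the sign of the first term.

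Specializing to $\kappa = \ell/2$ and $\mu = m + (1-\ell)/2$, I would then invoke the two evaluations already established within the proof of (\ref{I1_ml_reduction}), namely (\ref{I1_calligrafic_resultado_a}) and (\ref{I1_calligrafic_resultado_b}), which identify
\[
\mathcal{I}_{1}\!\left(\tfrac{\ell}{2},\,m+\tfrac{1-\ell}{2};\,x\right) = \mathcal{F}(\ell,m,x), \qquad \mathcal{I}_{1}\!\left(-\tfrac{\ell}{2},\,m+\tfrac{1-\ell}{2};\,-x\right) = \mathcal{F}(-\ell,\,m-\ell,\,-x).
\]
Inserting these into the displayed identity for $J_{1}$ yields (\ref{J1_ml_resultado}) at once. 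No genuinely new obstacle arises beyond careful sign bookkeeping in the split of $\ln[t(1-t)]$; all the nontrivial machinery (the reduction of $\int_{0}^{1} e^{xt}t^{m}\ln t\,dt$ to a $_{2}F_{2}$, the hypergeometric differentiation formula \cite[Eqn. 16.3.1]{olver2010nist}, and the rewriting of $_{2}F_{2}(1,1;2,2;x)$ via the hyperbolic sine and cosine integrals) has already been carried out in the previous theorem and simply needs to be reused.
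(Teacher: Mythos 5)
Your proposal is correct and follows essentially the same route as the paper: the same split of $\ln[t(1-t)]$ into $\ln t + \ln(1-t)$, the same substitution $\tau = 1-t$ yielding $J_{1}(\kappa,\mu;x)=e^{x}\mathcal{I}_{1}(-\kappa,\mu;-x)+\mathcal{I}_{1}(\kappa,\mu;x)$, and the same reuse of (\ref{I1_calligrafic_resultado_a}) and (\ref{I1_calligrafic_resultado_b}). No issues.
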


\begin{proof}
From the definition of $J_{1}\left( \kappa ,\mu ;x\right) $ given in (\ref%
{J1_def}), we have%
\begin{eqnarray*}
J_{1}\left( \kappa ,\mu ;x\right) &=&\int_{0}^{1}e^{xt}t^{\mu -\kappa
-1/2}\left( 1-t\right) ^{\mu +\kappa -1/2}\ln t\,dt \\
&&+\int_{0}^{1}e^{xt}t^{\mu -\kappa -1/2}\left( 1-t\right) ^{\mu +\kappa
-1/2}\ln \left( 1-t\right) \,dt.
\end{eqnarray*}%
Perform the change of variables $\tau =1-t$ in the second integral above to
arrive at%
\begin{equation}
J_{1}\left( \kappa ,\mu ;x\right) =e^{x}\mathcal{I}_{1}\left( -\kappa ,\mu
;-x\right) +\mathcal{I}_{1}\left( \kappa ,\mu ;x\right) ,
\label{J1->I1_calligrafic}
\end{equation}%
where we follow the notation given in (\ref{I1_calligrafic_def}) for the
integral $\mathcal{I}_{1}\left( \kappa ,\mu ;x\right) $. According to the
results obtained in (\ref{I1_calligrafic_resultado_a})\ and (\ref%
{I1_calligrafic_resultado_b}), we arrive at (\ref{J1_ml_resultado}), as we
wanted to prove.
\end{proof}

\begin{theorem}
For $\ell \in
\mathbb{Z}
$ and $m=0,1,2,\ldots $, with $m\geq \ell $, the following reduction formula
holds true for $x\in
\mathbb{R}
$:%
\begin{eqnarray}
&&\left. \frac{\partial \mathrm{M}_{\kappa ,\mu }\left( x\right) }{\partial
\mu }\right\vert _{\kappa =\ell /2,\mu =m+\left( 1-\ell \right) /2}
\label{DmM_lm_resultado} \\
&=&\left( 2m-\ell +1\right) \binom{2m-\ell }{m}x^{\ell /2-m}e^{-x/2}  \notag
\\
&&\left\{ \left( -1\right) ^{m-\ell }\left( \ln x+2\,H_{2m-\ell
+1}-H_{m-\ell }-H_{m}\right) \left[ e^{x}\mathcal{P}\left( -\ell ,m-\ell
,-x\right) -\mathcal{P}\left( \ell ,m,x\right) \right] \right.  \notag \\
&&+\left. x^{2m+1-\ell }\left[ e^{x}\mathcal{F}\left( -\ell ,m-\ell
,-x\right) +\mathcal{F}\left( \ell ,m,x\right) \right] \right\} .  \notag
\end{eqnarray}
\end{theorem}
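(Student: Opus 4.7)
The plan is to specialize the first integral-based expression (\ref{DmM_J1}) for $\partial \mathrm{M}_{\kappa,\mu}(x)/\partial \mu$ at $\kappa = \ell/2$ and $\mu = m + (1-\ell)/2$, and then substitute the closed-form evaluations of $\mathrm{M}_{\kappa,\mu}(x)$ and $J_1(\kappa,\mu;x)$ that have already been derived, namely (\ref{M_ml_resultado}) and (\ref{J1_ml_resultado}). Since both pieces are in hand, the argument will be essentially a bookkeeping computation.

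First I would evaluate the three parameter combinations appearing in the bracketed coefficient of $\mathrm{M}_{\kappa,\mu}(x)$ in (\ref{DmM_J1}): at our values, $\mu-\kappa+\tfrac{1}{2} = m-\ell+1$, $\mu+\kappa+\tfrac{1}{2} = m+1$, and $2\mu+1 = 2m-\ell+2$. Applying (\ref{Psi(n+1)}) to turn the three digamma values into harmonic numbers, the Euler $\gamma$ contributions cancel and the bracket collapses to $\ln x + 2H_{2m-\ell+1} - H_{m-\ell} - H_m$, which matches the logarithmic factor in the claimed formula. Next I would simplify the beta-function prefactor in (\ref{DmM_J1}): using (\ref{Beta_def}), $\mathrm{B}(m+1, m-\ell+1) = m!(m-\ell)!/(2m-\ell+1)!$, so its reciprocal is precisely $(2m-\ell+1)\binom{2m-\ell}{m}$, which will become the overall prefactor of the result.

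Finally I would substitute (\ref{M_ml_resultado}) for $\mathrm{M}_{\ell/2,m+(1-\ell)/2}(x)$ into the first term of (\ref{DmM_J1}), and (\ref{J1_ml_resultado}) for $J_1(\ell/2, m+(1-\ell)/2; x)$ into the second. A brief factoring step pulls out the common prefactor $(2m-\ell+1)\binom{2m-\ell}{m}\,x^{\ell/2-m}e^{-x/2}$ from both contributions, after regrouping the exponential $e^{-x/2}$ from $\mathrm{M}$ with the $e^{x/2}$ inside its $\mathcal{P}$-representation and rewriting the power $x^{\mu+1/2}=x^{\ell/2-m}\cdot x^{2m+1-\ell}$ for the $J_1$-contribution. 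The resulting expression matches (\ref{DmM_lm_resultado}) term by term: the $\mathcal{P}$-block carries the harmonic/logarithmic coefficient and the sign $(-1)^{m-\ell}$ inherited from (\ref{M_ml_resultado}), while the $\mathcal{F}$-block appears with the ``$+$'' sign characteristic of (\ref{J1_ml_resultado}) as opposed to the ``$-$'' sign in (\ref{I1_ml_reduction}).

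No analytic obstacle is expected; the only places that require care are tracking the sign $(-1)^{m-\ell}$ when $\mathrm{M}_{\ell/2,m+(1-\ell)/2}(x)$ is inserted, verifying the exponent identity $x^{\ell/2-m}\cdot x^{2m+1-\ell} = x^{\mu+1/2}$, and distinguishing the ``$-$'' in the $\mathcal{P}$-bracket (from $\mathrm{M}$) from the ``$+$'' in the $\mathcal{F}$-bracket (from $J_1$), which is what differentiates this theorem from its $\partial/\partial\kappa$ counterpart (\ref{DkMlm}).
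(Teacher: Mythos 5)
Your proposal is correct and follows exactly the paper's own route: the paper's proof is precisely ``insert (\ref{M_ml_resultado}) and (\ref{J1_ml_resultado}) in (\ref{DmM_J1}) and apply (\ref{Psi(n+1)})'', and your bookkeeping of the digamma/harmonic-number cancellation, the beta-function prefactor, and the sign structure of the $\mathcal{P}$- and $\mathcal{F}$-blocks all checks out.
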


\begin{proof}
Insert (\ref{M_ml_resultado})\ and (\ref{J1_ml_resultado})\ in (\ref{DmM_J1}%
)\ and apply (\ref{Psi(n+1)}).
\end{proof}

Table \ref{Table_4}\ shows the first derivative of $\mathrm{M}_{\kappa ,\mu
}\left( x\right) $ with respect to parameter $\mu $ for some particular
values of $\kappa $ and $\mu $, andfor $x\in
\mathbb{R}
$, which has been calculated from (\ref{DmM_lm_resultado}) and are not
contained in Tables \ref{Table_2A} and \ref{Table_2}.

\begin{center}
\begin{table}[htbp] \centering%
\caption{Derivative of $M_{\kappa,\mu}$ with respect
to $\mu$ by using (\ref{DmM_lm_resultado}).}%
\rotatebox{90}{
\begin{tabular}{|c|c|c|}
\hline
$\kappa $ & $\mu $ & $\frac{\partial \mathrm{M}_{\kappa ,\mu }\left(
x\right) }{\partial \mu }$ \\ \hline\hline
$-\frac{3}{2}$ & $2$ & $%
\begin{array}{l}
\frac{4}{x^{3/2}}\left\{ e^{x/2}\left[ \left( x^{3}-3x^{2}+6x-6\right)
\left( \mathrm{Chi}\left( x\right) -\mathrm{Shi}\left( x\right) -\gamma
\right) +\frac{7}{3}x^{3}-11x^{2}+28x-36\right] \right. \\
\quad +\left. e^{-x/2}\left[ 6\left( \mathrm{Chi}\left( x\right) +\mathrm{Shi%
}\left( x\right) -\gamma \right) +x^{2}-4x+36\right] \right\}%
\end{array}%
$ \\ \hline
$-1$ & $\frac{3}{2}$ & $%
\begin{array}{l}
\frac{1}{x}\left\{ e^{x/2}\left[ 3\left( x^{2}-2x+2\right) \left( \mathrm{Chi%
}\left( x\right) -\mathrm{Shi}\left( x\right) -\gamma \right) +\frac{13}{2}%
x^{2}-22x+31\right] \right. \\
\quad +\left. e^{-x/2}\left[ 3\left( x-2\,\mathrm{Chi}\left( x\right) -2\,%
\mathrm{Shi}\left( x\right) +2\gamma \right) -31\right] \right\}%
\end{array}%
$ \\ \hline
$-\frac{1}{2}$ & $1$ & $%
\begin{array}{l}
\frac{2}{\sqrt{x}}\left\{ e^{x/2}\left[ \left( x-1\right) \left( \mathrm{Chi}%
\left( x\right) -\mathrm{Shi}\left( x\right) -\gamma +2\right) -2\right]
\right. \\
\quad +\left. e^{-x/2}\left( \mathrm{Chi}\left( x\right) +\mathrm{Shi}\left(
x\right) -\gamma +4\right) \right\}%
\end{array}%
$ \\ \hline
$-\frac{1}{2}$ & $2$ & $%
\begin{array}{l}
\frac{8}{x^{3/2}}\left\{ e^{x/2}\left[ 3\left( \frac{1}{2}x^{2}-2x+3\right)
\left( \mathrm{Chi}\left( x\right) -\mathrm{Shi}\left( x\right) -\gamma
\right) +4x^{2}-22x+48\right] \right. \\
\quad -\left. e^{-x/2}\left[ 3\left( x+3\right) \left( \mathrm{Chi}\left(
x\right) +\mathrm{Shi}\left( x\right) -\gamma \right) +8\left( x+6\right) %
\right] \right\}%
\end{array}%
$ \\ \hline
$\frac{1}{2}$ & $1$ & $%
\begin{array}{l}
\frac{2}{\sqrt{x}}\left\{ e^{x/2}\left( \mathrm{Chi}\left( x\right) -\mathrm{%
Shi}\left( x\right) -\gamma +4\right) \right. \\
\quad -\left. e^{-x/2}\left[ \left( x+1\right) \left( \mathrm{Chi}\left(
x\right) +\mathrm{Shi}\left( x\right) -\gamma +2\right) +2\right] \right\}%
\end{array}%
$ \\ \hline
$\frac{1}{2}$ & $2$ & $%
\begin{array}{l}
\frac{4}{x^{3/2}}\left\{ e^{x/2}\left[ 6\left( x-3\right) \left( \mathrm{Chi}%
\left( x\right) -\mathrm{Shi}\left( x\right) -\gamma \right) +16\left(
x-6\right) \right] \right. \\
\quad +\left. e^{-x/2}\left[ 3\left( x^{2}+4x+6\right) \left( \mathrm{Chi}%
\left( x\right) +\mathrm{Shi}\left( x\right) -\gamma \right) +8x^{2}+44x+96%
\right] \right\}%
\end{array}%
$ \\ \hline
\end{tabular}%
}
\label{Table_4}%
\end{table}%
\end{center}

\begin{corollary}
For $\ell \in
\mathbb{Z}
$ and $m=0,1,2,\ldots $, with $m\geq \ell $, the following reduction formula
holds true for $x\in
\mathbb{R}
$:%
\begin{eqnarray}
&&H^{\left( 1\right) }\left( \left.
\begin{array}{c}
m+1-\ell \\
2\left( m+1\right) -\ell%
\end{array}%
\right\vert x\right)  \label{H(1)lm_resultado} \\
&=&\left( 2m-\ell +1\right) \binom{2m-\ell }{m}  \notag \\
&&\left\{ \left( -1\right) ^{m-\ell }x^{\ell -2m-1}\left( H_{2m-\ell
+1}-H_{m}\right) \left[ e^{x}\mathcal{P}\left( -\ell ,m-\ell ,-x\right) -%
\mathcal{P}\left( \ell ,m,x\right) \right] \right.  \notag \\
&&+\left. e^{x}\mathcal{F}\left( -\ell ,m-\ell ,-x\right) \right\} .  \notag
\end{eqnarray}
\end{corollary}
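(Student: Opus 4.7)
The plan is to isolate $H^{(1)}$ directly from equation (\ref{DmM_H(1)}), which is an algebraic identity linking $\partial \mathrm{M}_{\kappa,\mu}/\partial \mu$, $\mathrm{M}_{\kappa,\mu}$, $G^{(1)}$ and $H^{(1)}$. Specializing it to $\kappa = \ell/2$ and $\mu = m+(1-\ell)/2$ (so that $\tfrac{1}{2}+\mu-\kappa = m+1-\ell$ and $1+2\mu = 2(m+1)-\ell$, exactly the parameters appearing in the statement) and solving for $H^{(1)}$ gives
$$H^{(1)}\!\left(\left.\begin{array}{c} m+1-\ell \\ 2(m+1)-\ell \end{array}\right|x\right) = \tfrac{1}{2}\!\left[\frac{e^{x/2}}{x^{m+1-\ell/2}}\!\left(\frac{\partial \mathrm{M}}{\partial \mu}\bigg|_{\kappa=\ell/2,\,\mu=m+(1-\ell)/2} - \ln x\cdot \mathrm{M}\right) - G^{(1)}\right].$$
Every quantity on the right-hand side is already available in closed form: $\mathrm{M}$ from (\ref{M_ml_resultado}), $\partial \mathrm{M}/\partial \mu$ from (\ref{DmM_lm_resultado}), and $G^{(1)}$ from (\ref{G(1)_ml_resultado}). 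So the proof is entirely a substitution-and-simplification exercise.

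The first simplification I would carry out is the cancellation of the $\ln x$ terms. The contribution $(-1)^{m-\ell}\ln x\cdot x^{\ell/2-m}e^{-x/2}[e^{x}\mathcal{P}(-\ell,m-\ell,-x) - \mathcal{P}(\ell,m,x)]$ inside $\partial \mathrm{M}/\partial \mu$ matches exactly the product $\ln x \cdot \mathrm{M}_{\ell/2,m+(1-\ell)/2}$, once one rewrites $e^{x/2}\mathcal{P}(-\ell,m-\ell,-x) - e^{-x/2}\mathcal{P}(\ell,m,x) = e^{-x/2}[e^{x}\mathcal{P}(-\ell,m-\ell,-x) - \mathcal{P}(\ell,m,x)]$. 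This kills the transcendental $\ln x$ from the right-hand side, leaving only harmonic numbers and the $\mathcal{F}$-integrals.

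Next I would collect the two remaining pieces. Multiplying $\partial \mathrm{M}/\partial \mu - \ln x\cdot \mathrm{M}$ by $e^{x/2}/x^{m+1-\ell/2}$ and subtracting $G^{(1)}$, the coefficient of $(-1)^{m-\ell}x^{\ell-2m-1}[e^{x}\mathcal{P}(-\ell,m-\ell,-x)-\mathcal{P}(\ell,m,x)]$ becomes $(2H_{2m-\ell+1} - H_{m-\ell} - H_{m}) + (H_{m-\ell} - H_{m}) = 2(H_{2m-\ell+1} - H_{m})$, while the $\mathcal{F}$ contributions combine as $[e^{x}\mathcal{F}(-\ell,m-\ell,-x) + \mathcal{F}(\ell,m,x)] + [e^{x}\mathcal{F}(-\ell,m-\ell,-x) - \mathcal{F}(\ell,m,x)] = 2e^{x}\mathcal{F}(-\ell,m-\ell,-x)$, since the $\mathcal{F}(\ell,m,x)$ terms from $\partial \mathrm{M}/\partial \mu$ and from $-G^{(1)}$ cancel. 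Dividing by $2$ produces exactly (\ref{H(1)lm_resultado}).

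The main obstacle is purely bookkeeping: one must track signs, the $e^{\pm x/2}$ factors and the harmonic-number combinations carefully so that the expected cancellations occur. No new integral evaluation, series identity, or special-function transformation is required beyond what the paper has already established; conceptually, the corollary is a direct consequence of (\ref{DmM_H(1)}) combined with the three previously derived closed forms for $\mathrm{M}$, $\partial \mathrm{M}/\partial \mu$ and $G^{(1)}$ at these rational half-integer parameters.
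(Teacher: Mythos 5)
Your proposal is correct and follows exactly the paper's own route: the paper likewise takes $\kappa=\ell/2$, $\mu=m+(1-\ell)/2$ in (\ref{DmM_H(1)}) and substitutes (\ref{M_ml_resultado}), (\ref{G(1)_ml_resultado}) and (\ref{DmM_lm_resultado}), leaving the simplification implicit. Your explicit tracking of the $\ln x$ cancellation, the harmonic-number combination $2(H_{2m-\ell+1}-H_m)$, and the collapse of the $\mathcal{F}$ terms to $2e^{x}\mathcal{F}(-\ell,m-\ell,-x)$ is accurate and in fact supplies the bookkeeping the paper omits.
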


\begin{proof}
Take $\kappa =\frac{\ell }{2}$ and $\mu =m+\frac{1-\ell }{2}$ in (\ref%
{DmM_H(1)}), and substitute the results given in (\ref{M_ml_resultado}), (%
\ref{G(1)_ml_resultado})\ and (\ref{DmM_lm_resultado}). After
simplification, we arrive at (\ref{H(1)lm_resultado}), as we wanted to prove.
\end{proof}

\subsection{Application to the calculation of finite integrals}

\begin{theorem}
For $\mu \geq 0$ and $x\in
\mathbb{R}
$, the following finite integral holds true:%
\begin{eqnarray}
&&\int_{0}^{1}e^{xt}\left[ t\left( 1-t\right) \right] ^{\mu -1/2}\ln \left[
t\left( 1-t\right) \right] dt  \label{J1(0,mu;x)_resultado} \\
&=&J_{1}\left( 0,\mu ;x\right)  \notag \\
&=&\mathrm{B}\left( \mu +\frac{1}{2},\mu +\frac{1}{2}\right) \left( \frac{4}{%
\left\vert x\right\vert }\right) ^{\mu }e^{x/2}\Gamma \left( 1+\mu \right)
\notag \\
&&\left\{ I_{\mu }\left( \frac{\left\vert x\right\vert }{2}\right) \left[
\psi \left( \mu +\frac{1}{2}\right) -\ln \left\vert x\right\vert \right] +%
\frac{\partial I_{\mu }\left( \left\vert x\right\vert /2\right) }{\partial
\mu }\right\} ,  \notag
\end{eqnarray}%
where $\partial I_{\mu }\left( x\right) /\partial \mu $ is given by (\ref%
{DmuI_Meijer})\ or (\ref{DmuI_Hyper}).
\end{theorem}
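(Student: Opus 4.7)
The plan is to specialize the general formula (\ref{J1_general}) to $\kappa=0$ and then eliminate the parameter derivatives $G^{(1)}$ and $H^{(1)}$ in favor of the modified Bessel function $I_\mu$, using the Bessel representation (\ref{M_0_mu}) of $\mathrm{M}_{0,\mu}$ together with the explicit $\mu$-derivative formula (\ref{DmM0}) derived earlier.

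First I would set $\kappa=0$ in (\ref{J1_general}) and, using (\ref{DmM_H(1)}) at the same value, solve for the combination $G^{(1)}+2H^{(1)}$ in terms of $\partial\mathrm{M}_{0,\mu}(x)/\partial\mu$ and $\mathrm{M}_{0,\mu}(x)$. Combined with the relation $_{1}F_{1}(\mu+\tfrac12;2\mu+1;x)=x^{-\mu-1/2}e^{x/2}\mathrm{M}_{0,\mu}(x)$ implied by (\ref{M_k,mu_def}), this rewrites the bracket on the right-hand side of (\ref{J1_general}) as
$$\frac{e^{x/2}}{x^{\mu+1/2}}\left\{\frac{\partial\mathrm{M}_{0,\mu}(x)}{\partial\mu}+\bigl[2\psi(\mu+\tfrac12)-2\psi(2\mu+1)-\ln x\bigr]\mathrm{M}_{0,\mu}(x)\right\}.$$
Substituting (\ref{M_0_mu}) and (\ref{DmM0}) and factoring out the common prefactor $4^\mu\Gamma(1+\mu)\sqrt{x}$ isolates $\partial I_\mu(x/2)/\partial\mu$ together with a scalar coefficient multiplying $I_\mu(x/2)$.

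The decisive step is an algebraic simplification: the accumulated coefficient of $I_\mu(x/2)$ is $2\psi(\mu+\tfrac12)-2\psi(2\mu+1)+\ln 4+\psi(\mu+1)-\ln x$, which collapses to $\psi(\mu+\tfrac12)-\ln x$ via the Legendre duplication identity $2\psi(2\mu+1)=\psi(\mu+\tfrac12)+\psi(\mu+1)+\ln 4$. This proves the stated formula for $x>0$. To extend to $x<0$, I would use the symmetry of $[t(1-t)]^{\mu-1/2}\ln[t(1-t)]$ about $t=1/2$: the substitution $t\mapsto 1-t$ yields the functional equation $J_{1}(0,\mu;x)=e^{x}J_{1}(0,\mu;-x)$, and combining it with the $x>0$ result produces the absolute-value form displayed in (\ref{J1(0,mu;x)_resultado}). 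The only genuine obstacle is bookkeeping the digamma duplication together with the $\ln 4$ and $\ln x$ terms; everything else is direct substitution.
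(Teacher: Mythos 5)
Your proposal is correct and follows essentially the same route as the paper: the paper works directly from (\ref{DmM_J1}) with $\kappa=0$, equates it to (\ref{DmM0}), solves for $J_{1}(0,\mu;x)$, and simplifies with the same digamma duplication identity, then extends to $x<0$ via the reflection formula $J_{1}(0,\mu;x)=e^{x}J_{1}(0,\mu;-x)$. Your detour through (\ref{J1_general}) and (\ref{DmM_H(1)}) merely undoes the derivation of (\ref{J1_general}) and lands on the identical computation.
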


\begin{proof}
First, consider that $x>0$. Take $\kappa =0$ in (\ref{DmM_J1}) and
susbtitute (\ref{M_0_mu}) to arrive at
\begin{eqnarray}
&&\left. \frac{\partial \mathrm{M}_{\kappa ,\mu }\left( x\right) }{\partial
\mu }\right\vert _{\kappa =0}  \label{DmMk=0_(1)} \\
&=&4^{\mu }\,\Gamma \left( 1+\mu \right) \sqrt{x}I_{\mu }\left( \frac{x}{2}%
\right) \left[ \ln x-2\,\psi \left( \mu +\frac{1}{2}\right) +2\,\psi \left(
2\mu +1\right) \right]  \notag \\
&&+\frac{x^{\mu +1/2}e^{-x/2}}{\mathrm{B}\left( \mu +\frac{1}{2},\mu +\frac{1%
}{2}\right) }J_{1}\left( 0,\mu ;x\right)  \notag
\end{eqnarray}%
Next, equate (\ref{DmMk=0_(1)}) to the expression given in (\ref{DmM0}), and
solve for $J_{1}\left( 0,\mu ;x\right) $ to get%
\begin{eqnarray}
&&J_{1}\left( 0,\mu ;x\right)  \label{DmMk=0_(2)} \\
&=&\mathrm{B}\left( \mu +\frac{1}{2},\mu +\frac{1}{2}\right) \left( \frac{4}{%
x}\right) ^{\mu }e^{x/2}\Gamma \left( 1+\mu \right)  \notag \\
&=&\left\{ I_{\mu }\left( \frac{x}{2}\right) \left[ \ln \left( \frac{4}{x}%
\right) +\psi \left( 1+\mu \right) +2\psi \left( \mu +\frac{1}{2}\right)
-2\psi \left( 2\mu +1\right) \right] +\frac{\partial I_{\mu }\left(
x/2\right) }{\partial \mu }\right\} .  \notag
\end{eqnarray}%
Now, apply the property \cite[Eqn. 5.5.8]{olver2010nist}
\begin{equation*}
\psi \left( 2z\right) =\frac{1}{2}\left[ \psi \left( z\right) +\psi \left( z+%
\frac{1}{2}\right) \right] +\ln 2,
\end{equation*}%
for $z=\mu +\frac{1}{2}$ to simplify (\ref{DmMk=0_(2)}) as
\begin{eqnarray}
&&J_{1}\left( 0,\mu ;x\right)  \label{J1(0,mu;x)_(1)} \\
&=&\mathrm{B}\left( \mu +\frac{1}{2},\mu +\frac{1}{2}\right) \left( \frac{4}{%
x}\right) ^{\mu }e^{x/2}\Gamma \left( 1+\mu \right)  \notag \\
&=&\left\{ I_{\mu }\left( \frac{x}{2}\right) \left[ \psi \left( \mu +\frac{1%
}{2}\right) -\ln x\right] +\frac{\partial I_{\mu }\left( x/2\right) }{%
\partial \mu }\right\} ,  \notag
\end{eqnarray}%
where (\ref{J1(0,mu;x)_(1)})\ holds true for $x>0$. Finally, note that
performing in (\ref{J1_def}) the change of variables $\tau =1-t$, we obtain
the reflection formula%
\begin{equation}
J_{1}\left( 0,\mu ;x\right) =e^{x}J_{1}\left( 0,\mu ;-x\right) ,
\label{J1_reflection}
\end{equation}%
so that from (\ref{J1(0,mu;x)_(1)})\ and (\ref{J1_reflection}) we arrive at (%
\ref{J1(0,mu;x)_resultado}), as we wanted to prove.
\end{proof}

\begin{theorem}
For $\mu \geq 0$ and $x\in
\mathbb{R}
$, the following finite integral holds true:%
\begin{eqnarray}
&&\int_{-1}^{1}e^{xt/2}\left[ t\left( 1-t\right) \right] ^{\mu -1/2}\ln %
\left[ t\left( 1-t\right) \right] dt  \label{J3(0,mu;x)_resultado} \\
&=&J_{3}\left( 0,\mu ;x\right)  \notag \\
&=&\mathrm{B}\left( \mu +\frac{1}{2},\mu +\frac{1}{2}\right) \Gamma \left(
1+\mu \right) \left( \frac{16}{\left\vert x\right\vert }\right) ^{\mu }
\notag \\
&&\left\{ I_{\mu }\left( \frac{\left\vert x\right\vert }{2}\right) \left[
\psi \left( \mu +\frac{1}{2}\right) +\ln \left( \frac{4}{\left\vert
x\right\vert }\right) \right] +\frac{\partial I_{\mu }\left( \left\vert
x\right\vert /2\right) }{\partial \mu }\right\} ,  \notag
\end{eqnarray}%
where $\partial I_{\mu }\left( x\right) /\partial \mu $ is given by (\ref%
{DmuI_Meijer})\ or (\ref{DmuI_Hyper}).
\end{theorem}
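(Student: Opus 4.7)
The plan is to reduce the computation of $J_{3}(0,\mu;x)$ directly to the previous theorem on $J_{1}(0,\mu;x)$ by exploiting the interrelationship between the $J_{i}$ integrals established just after the definitions (\ref{J1_def})--(\ref{J4_def}). Specifically, setting $\kappa=0$ in
\begin{equation*}
J_{3}(\kappa,\mu;x)=2^{2\mu}\left[e^{-x/2}J_{1}(\kappa,\mu;x)+\frac{\ln 4}{x^{\mu+1/2}}\mathrm{B}\!\left(\mu+\kappa+\tfrac{1}{2},\mu-\kappa+\tfrac{1}{2}\right)\mathrm{M}_{\kappa,\mu}(x)\right]
\end{equation*}
yields $J_{3}(0,\mu;x)$ in terms of $J_{1}(0,\mu;x)$ and $\mathrm{M}_{0,\mu}(x)$, both of which are already in closed form.

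First, I would substitute $\mathrm{M}_{0,\mu}(x)=4^{\mu}\Gamma(1+\mu)\sqrt{x}\,I_{\mu}(x/2)$ from (\ref{M_0_mu}) into the second term; the factor $x^{-(\mu+1/2)}\sqrt{x}$ collapses to $x^{-\mu}$, producing a term proportional to $(16/x)^{\mu}\mathrm{B}(\mu+\tfrac{1}{2},\mu+\tfrac{1}{2})\Gamma(1+\mu)I_{\mu}(x/2)\ln 4$. Second, I would insert the closed form of $J_{1}(0,\mu;x)$ from (\ref{J1(0,mu;x)_resultado}); the $e^{x/2}$ there cancels against the $e^{-x/2}$ appearing in the relation above, and the prefactor $(4/x)^{\mu}$ combines with the external $2^{2\mu}$ to give $(16/x)^{\mu}$. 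Collecting the $I_{\mu}(x/2)$ contributions, the bracket $[\psi(\mu+\tfrac{1}{2})-\ln x]$ from $J_{1}(0,\mu;x)$ acquires an extra $\ln 4$ from the $\mathrm{M}_{0,\mu}$ term, producing $[\psi(\mu+\tfrac{1}{2})+\ln(4/x)]$, while the derivative term $\partial I_{\mu}(x/2)/\partial\mu$ passes through unchanged. This produces (\ref{J3(0,mu;x)_resultado}) for $x>0$.

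For $x<0$, I would mimic the reflection trick used in the preceding proof: the change of variables $t\mapsto -t$ in the definition (\ref{J3_def}) of $J_{3}(0,\mu;x)$ preserves the factor $(1-t^{2})^{\mu-1/2}\ln(1-t^{2})$ and simply flips the sign of the exponential argument, giving $J_{3}(0,\mu;x)=J_{3}(0,\mu;-x)$. Combined with the $x>0$ result, this forces $|x|$ to appear wherever $x$ occurred in the logarithm and in the modified Bessel arguments, yielding exactly the statement.

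I do not expect any real obstacle; the work is essentially algebraic bookkeeping of the prefactors $2^{2\mu}$, $(4/x)^{\mu}$, $4^{\mu}$, and $\sqrt{x}$, and of the logarithmic constants $\ln 4$, $\ln x$. The only point requiring a little care is the sign of $x$, which is handled by the reflection $t\mapsto -t$ as just described; this replaces $x$ by $|x|$ uniformly in the final expression.
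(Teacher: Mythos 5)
Your proposal is correct and follows essentially the same route as the paper: set $\kappa=0$ in the $J_{3}$--$J_{1}$ interrelation, substitute $\mathrm{M}_{0,\mu}(x)=4^{\mu}\Gamma(1+\mu)\sqrt{x}\,I_{\mu}(x/2)$ and the closed form of $J_{1}(0,\mu;x)$, and extend to $x<0$ by the reflection $t\mapsto -t$ giving $J_{3}(0,\mu;x)=J_{3}(0,\mu;-x)$. The bookkeeping of the prefactors you describe matches the paper's computation exactly.
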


\begin{proof}
Consider $x>0$. Take $\kappa =0$ in (\ref{DmM_J3}) and susbtitute (\ref%
{M_0_mu}) to obtain%
\begin{eqnarray}
&&J_{3}\left( 0,\mu ;x\right)  \label{J3(0,mu;x)_(1)} \\
&=&2^{2\mu }e^{-x/2}J_{1}\left( 0,\mu ;x\right)  \notag \\
&&+2^{4\mu }\frac{\ln 4}{x^{\mu }}\mathrm{B}\left( \mu +\frac{1}{2},\mu +%
\frac{1}{2}\right) \,\,\Gamma \left( 1+\mu \right) I_{\mu }\left( \frac{x}{2}%
\right) .  \notag
\end{eqnarray}%
Now, insert in (\ref{J3(0,mu;x)_(1)})\ the result given in (\ref%
{J1(0,mu;x)_(1)})\ and simplify to get for $x>0$%
\begin{eqnarray}
&&J_{3}\left( 0,\mu ;x\right)  \label{J3(0,mu;x)_(2)} \\
&=&\mathrm{B}\left( \mu +\frac{1}{2},\mu +\frac{1}{2}\right) \Gamma \left(
1+\mu \right) \left( \frac{16}{x}\right) ^{\mu }  \notag \\
&&\left\{ I_{\mu }\left( \frac{x}{2}\right) \left[ \psi \left( \mu +\frac{1}{%
2}\right) +\ln \left( \frac{4}{x}\right) \right] +\frac{\partial I_{\mu
}\left( x/2\right) }{\partial \mu }\right\} .  \notag
\end{eqnarray}%
Finally, note that performing in (\ref{J3_def}) the change of variables $%
\tau =-t$, we obtain the reflection formula%
\begin{equation}
J_{3}\left( 0,\mu ;x\right) =J_{3}\left( 0,\mu ;-x\right) ,
\label{J3_reflection}
\end{equation}%
so that from (\ref{J3(0,mu;x)_(2)})\ and (\ref{J3_reflection}) we arrive at (%
\ref{J3(0,mu;x)_resultado}), as we wanted to prove.
\end{proof}

Table \ref{Table_3B} shows the integral $J_{1}\left( \kappa ,\mu ;x\right) $
for particular values of the parameters $\kappa $ and $\mu $, and $x\in
\mathbb{R}
$, obtained from (\ref{J1_general}), (\ref{J1_ml_resultado})\ and (\ref%
{J1(0,mu;x)_resultado}) with the aid of MATHEMATICA\ program.

\begin{center}
\begin{table}[htbp] \centering%
\caption{Integral $J_1(\kappa,\mu;x)$  for particular values of $\kappa$ and
$\mu$.}%
\rotatebox{90}{
\begin{tabular}{|c|c|c|}
\hline
$\kappa $ & $\mu $ & $J_{1}\left( \kappa ,\mu ;x\right) $ \\ \hline\hline
$-1$ & $0$ & $\pi \left\{ 2\,e^{x/2}\left( \ln 4-2\right) \left[ \left(
x+1\right) I_{0}\left( \frac{x}{2}\right) +x\,I_{1}\left( \frac{x}{2}\right) %
\right] -G^{\left( 1\right) }\left( \frac{3}{2};1;x\right) -2\,H^{\left(
1\right) }\left( \frac{3}{2};1;x\right) \right\} $ \\ \hline
$-\frac{1}{2}$ & $1$ & $x^{-2}\left\{ e^{x}\left[ \left( x-1\right) \left(
\mathrm{Chi}\left( x\right) -\mathrm{Shi}\left( x\right) -\ln x-\gamma
\right) -2\right] +\mathrm{Chi}\left( x\right) +\mathrm{Shi}\left( x\right)
-\ln x-\gamma +2\right\} $ \\ \hline
$-\frac{1}{3}$ & $0$ & $2\pi \left\{ G^{\left( 1\right) }\left( \frac{5}{6}%
;1;x\right) +2\,H^{\left( 1\right) }\left( \frac{5}{6};1;x\right) -\ln
\left( 432\right) L_{-5/6}\left( x\right) \right\} $ \\ \hline
$0$ & $0$ & $-\pi \,e^{x/2}\left\{ K_{0}\left( \frac{\left\vert x\right\vert
}{2}\right) +\left[ \ln \left( 4\left\vert x\right\vert \right) +\gamma %
\right] I_{0}\left( \frac{\left\vert x\right\vert }{2}\right) \right\} $ \\
\hline
$0$ & $\frac{1}{2}$ & $x^{-1}\left\{ e^{x}\left[ \mathrm{Chi}\left( x\right)
-\mathrm{Shi}\left( x\right) -\ln x-\gamma \right] -\mathrm{Chi}\left(
x\right) -\mathrm{Shi}\left( x\right) +\ln x+\gamma \right\} $ \\ \hline
$0$ & $1$ & $%
\begin{array}{l}
\left\{ I_{1}\left( \frac{\left\vert x\right\vert }{2}\right) \left[
I_{1}\left( \frac{\left\vert x\right\vert }{2}\right) K_{1}\left( \frac{%
\left\vert x\right\vert }{2}\right) -\ln \left( 4\left\vert x\right\vert
\right) -\gamma +2-\frac{1}{2\sqrt{\pi }}G_{1,3}^{2,1}\left( \frac{x^{2}}{4}%
;1/2;0,0,-1\right) \right] \right. \\
\quad +\left. K_{1}\left( \frac{\left\vert x\right\vert }{2}\right) \left[
1-I_{0}^{2}\left( \frac{\left\vert x\right\vert }{2}\right) \right] \right\}
\frac{\pi }{2\left\vert x\right\vert }e^{x/2}%
\end{array}%
$ \\ \hline
$\frac{1}{3}$ & $0$ & $2\pi \left\{ G^{\left( 1\right) }\left( \frac{1}{6}%
;1;x\right) +2\,H^{\left( 1\right) }\left( \frac{1}{6};1;x\right) -\ln
\left( 432\right) L_{-1/6}\left( x\right) \right\} $ \\ \hline
$\frac{1}{2}$ & $\frac{1}{2}$ & $\frac{\pi }{2}\left\{ G^{\left( 1\right)
}\left( \frac{1}{2};2;x\right) +2\,H^{\left( 1\right) }\left( \frac{1}{2}%
;2;x\right) -2\,e^{x/2}\ln 4\left[ I_{0}\left( \frac{x}{2}\right)
-\,I_{1}\left( \frac{x}{2}\right) \right] \right\} $ \\ \hline
$\frac{1}{2}$ & $1$ & $x^{-2}\left\{ e^{x}\left[ \mathrm{Chi}\left( x\right)
-\mathrm{Shi}\left( x\right) -\ln x-\gamma +2\right] -\left( x+1\right) %
\left[ \mathrm{Chi}\left( x\right) +\mathrm{Shi}\left( x\right) -\ln
x-\gamma \right] -2\right\} $ \\ \hline
\end{tabular}%
}
\label{Table_3B}%
\end{table}%
\end{center}

\section{Conclusions}

The Whittaker function $\mathrm{M}_{\kappa ,\mu }\left( x\right) $ is
defined in terms of the Kummer confluent hypergeometric function, hence its
derivative with respect to the parameters $\kappa $ and $\mu $ can be
expressed as infinite sums of quotients of the digamma and gamma functions.
Also, the parameter differentiation of the integral representations of $%
\mathrm{M}_{\kappa ,\mu }\left( x\right) $ leads to finite and infinite
integrals of elementary functions. These sums and integrals has been
calculated for particular values of the parameters $\kappa $ and $\mu $ in
closed-form. As an application of these results, we have obtained some
reduction formulas for the derivatives of the confluent Kummer function with
respect to the parameters, i.e. $G^{\left( 1\right) }\left( a,b;x\right) $
and $H^{\left( 1\right) }\left( a,b;x\right) $. Also, we have calculated
some finite integrals containing a combination of the exponential,
logarithmic and algebraic functions, as well as some infinite integrals
involving the exponential, logarithmic, algebraic and Bessel functions. It
is worth noting that all the results presented in this paper has been both
numerically and symbolically checked with MATHEMATICA program.

In the first Appendix, we have obtained the first derivative of the
incomplete gamma functions in closed-form. These results allow us to
calculate a finite logarithmic integral, which has been used to calculate
one of the integrals appearing in the body of the paper.

In the second Appendix, we have calculated some new reduction formulas for
the integral Whittaker functions $\mathrm{Mi}_{\kappa ,\mu }\left( x\right) $
and $\mathrm{mi}_{\kappa ,\mu }\left( x\right) $ from two reduction formulas
of the Whittaker function $\mathrm{M}_{\kappa ,\mu }\left( x\right) $. One
of the latter seems not to be reported in the literature.

In the third Appendix we collect some reduction formulas for the Whittaker
function $\mathrm{M}_{\kappa ,\mu }\left( x\right) $.

\appendix{}

\section{Parameter differentiation of the incomplete gamma functions}

\begin{definition}
The lower incomplete gamma function is defined as \cite{oldham2009atlas}:%
\begin{equation}
\gamma \left( \nu ,x\right) =\int_{0}^{x}t^{\nu -1}e^{-t}dt.
\label{gamma_def}
\end{equation}
\end{definition}

\begin{definition}
The upper incomplete gamma function is defined as \cite[Eqn. 45:3:2]%
{oldham2009atlas}%
\begin{equation}
\Gamma \left( \nu ,x\right) =\int_{x}^{\infty }t^{\nu -1}e^{-t}dt.
\label{Gamma_def}
\end{equation}
\end{definition}

The relation between both functions is
\begin{equation}
\Gamma \left( \nu \right) =\gamma \left( \nu ,x\right) +\Gamma \left( \nu
,x\right) .  \label{Gamma+gamma}
\end{equation}

The lower incomplete gamma function has the following series expansion \cite[%
Eqns. 45:6:1]{oldham2009atlas}
\begin{equation}
\gamma \left( \nu ,x\right) =e^{-x}\sum_{k=0}^{\infty }\frac{x^{k+\nu }}{%
\left( \nu \right) _{k+1}}.  \label{gamma_series_1}
\end{equation}

Also, the following integral representations in terms of infinite integrals
hold true \cite[Eqns. 8.6.3\&7]{olver2010nist} for $\mathrm{Re}\,z>0$,%
\begin{eqnarray*}
\gamma \left( \nu ,z\right) &=&z^{\nu }\int_{0}^{\infty }\exp \left( -\nu
t-z\,e^{-t}\right) dt, \\
\Gamma \left( \nu ,z\right) &=&z^{\nu }\int_{0}^{\infty }\exp \left( \nu
t-z\,e^{-t}\right) dt.
\end{eqnarray*}

From (\ref{gamma_def}), the derivative of the lower incomplete gamma
function with respect to the order $\nu $ has the following integral
representation:\
\begin{equation}
\frac{\partial \gamma \left( \nu ,x\right) }{\partial \nu }%
=\int_{0}^{x}t^{\nu -1}e^{-t}\ln t\,dt  \label{D_gamma_int}
\end{equation}

\begin{theorem}
The parameter derivative of the lower incomplete gamma function is%
\begin{equation}
\frac{\partial \gamma \left( \nu ,x\right) }{\partial \nu }=\gamma \left(
\nu ,x\right) \ln x-\frac{x^{\nu }}{\nu ^{2}}\,_{2}F_{2}\left( \left.
\begin{array}{c}
\nu ,\nu \\
\nu +1,\nu +1%
\end{array}%
\right\vert -x\right) .  \label{D_gamma}
\end{equation}
\end{theorem}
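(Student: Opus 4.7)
The plan is to start from the integral representation (\ref{D_gamma_int}) and perform the change of variables $t = xs$, which converts the upper limit to $1$ and produces $\ln t = \ln x + \ln s$. The integral then splits into two pieces:
\begin{equation*}
\frac{\partial \gamma\left(\nu,x\right)}{\partial \nu} = x^{\nu}\ln x\int_{0}^{1} s^{\nu-1}e^{-xs}\,ds + x^{\nu}\int_{0}^{1} s^{\nu-1}e^{-xs}\ln s\,ds.
\end{equation*}
The first piece is immediate: undoing the substitution in the integrand (without the $\ln$) gives $x^{\nu}\int_{0}^{1} s^{\nu-1}e^{-xs}\,ds = \gamma\left(\nu,x\right)$, so this contribution is exactly $\gamma\left(\nu,x\right)\ln x$, matching the first term on the right-hand side of (\ref{D_gamma}).

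Next, I would handle the second piece by expanding the exponential as a power series, $e^{-xs} = \sum_{k=0}^{\infty}\frac{\left(-x\right)^{k}s^{k}}{k!}$, and exchanging sum and integral. The remaining integrals reduce to the elementary formula $\int_{0}^{1} s^{\nu+k-1}\ln s\,ds = -\frac{1}{\left(\nu+k\right)^{2}}$, yielding
\begin{equation*}
x^{\nu}\int_{0}^{1} s^{\nu-1}e^{-xs}\ln s\,ds = -x^{\nu}\sum_{k=0}^{\infty} \frac{\left(-x\right)^{k}}{k!\,\left(\nu+k\right)^{2}}.
\end{equation*}

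To identify this series with the claimed ${}_2F_2$, I would use the Pochhammer identity $\left(\nu\right)_{k}/\left(\nu+1\right)_{k} = \nu/\left(\nu+k\right)$, which gives $1/\left(\nu+k\right)^{2} = \left[\left(\nu\right)_{k}\right]^{2}/\left\{\nu^{2}\left[\left(\nu+1\right)_{k}\right]^{2}\right\}$. Substituting this into the series and factoring out $1/\nu^{2}$ recognizes the sum as $\frac{1}{\nu^{2}}\,{}_{2}F_{2}\left(\nu,\nu;\nu+1,\nu+1;-x\right)$, producing the second term of (\ref{D_gamma}) with the correct sign. Adding both pieces yields the desired identity. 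The only technical point is justifying the interchange of sum and integral in the second piece, which is routine since the series for $e^{-xs}$ converges uniformly on $\left[0,1\right]$ for any fixed $x$, and $\left|s^{\nu-1}\ln s\right|$ is integrable for $\mathrm{Re}\,\nu > 0$; I do not expect any serious obstacle.
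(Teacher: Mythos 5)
Your proof is correct, but it follows a genuinely different route from the paper's. The paper differentiates the series representation $\gamma(\nu,x)=e^{-x}\sum_{k\geq 0}x^{k+\nu}/(\nu)_{k+1}$ term by term, which produces a sum of the form $\sum_k \frac{x^{k+\nu-1}}{(\nu)_k}\psi(k+\nu)$, and then closes it by invoking the tabulated summation formula \cite[Eqn.\ 6.2.1(63)]{brychkov2008handbook}, which already contains the $_2F_2$. You instead work directly from the integral representation (\ref{D_gamma_int}): the substitution $t=xs$ cleanly peels off the $\gamma(\nu,x)\ln x$ term, and the remaining piece is evaluated by expanding $e^{-xs}$, using $\int_0^1 s^{\nu+k-1}\ln s\,ds=-(\nu+k)^{-2}$, and the Pochhammer identity $(\nu)_k/(\nu+1)_k=\nu/(\nu+k)$ to reassemble the $_2F_2$. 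Your route is more elementary and self-contained (no appeal to an external summation formula), and in effect you prove the paper's Corollary (\ref{gamma_int_2}) en route — the paper instead deduces that corollary \emph{from} the theorem, so the logical order is reversed but there is no circularity. The trade-off is that your argument needs the (routine, and correctly addressed) justification of the sum--integral interchange and implicitly assumes $\mathrm{Re}\,\nu>0$ for convergence of (\ref{D_gamma_int}) near $t=0$, whereas the paper's series manipulation stays at the level of the defining expansion; both arguments are confined to the same parameter range, since the identity then extends by analytic continuation.
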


\begin{proof}
According to (\ref{gamma_def}) and (\ref{gamma_series_1}), the derivative of
the lower incomplete gamma function with respect to the parameter $\nu $ is,%
\begin{eqnarray*}
\frac{\partial \gamma \left( \nu ,x\right) }{\partial \nu }
&=&e^{-x}\sum_{k=0}^{\infty }\frac{x^{k+\nu }\left[ \ln x+\psi \left( \nu
\right) -\psi \left( k+1+\nu \right) \right] }{\left( \nu \right) _{k+1}} \\
&=&\left[ \ln x+\psi \left( \nu \right) \right] \gamma \left( \nu ,x\right)
-e^{-x}\sum_{k=0}^{\infty }\frac{x^{k+\nu -1}}{\left( \nu \right) _{k}}\psi
\left( k+\nu \right) .
\end{eqnarray*}%
Now, apply the sum formula \cite[Eqn. 6.2.1(63)]{brychkov2008handbook}
\begin{eqnarray*}
&&\sum_{k=0}^{\infty }\frac{t^{k}}{\left( a\right) _{k}}\psi \left(
k+a\right) \\
&=&\psi \left( a\right) +e^{t}\left[ t^{1-a}\psi \left( a\right) \,\gamma
\left( a,t\right) +\frac{t}{a^{2}}\,_{2}F_{2}\left( \left.
\begin{array}{c}
a,a \\
a+1,a+1%
\end{array}%
\right\vert -t\right) \right] ,
\end{eqnarray*}%
to arrive at (\ref{D_gamma}), as we wanted to prove.
\end{proof}

\begin{theorem}
The parameter derivative of the upper incomplete gamma function is%
\begin{eqnarray}
&&\frac{\partial \Gamma \left( \nu ,x\right) }{\partial \nu }
\label{D_Gamma} \\
&=&\Gamma \left( \nu \right) \psi \left( \nu \right) -\gamma \left( \nu
,x\right) \ln x+\frac{x^{\nu }}{\nu ^{2}}\,_{2}F_{2}\left( \left.
\begin{array}{c}
\nu ,\nu \\
\nu +1,\nu +1%
\end{array}%
\right\vert -x\right) .  \notag
\end{eqnarray}
\end{theorem}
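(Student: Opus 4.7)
The plan is to reduce this statement to the previous theorem via the elementary identity relating the two incomplete gamma functions. The key observation is that the complete gamma function $\Gamma(\nu)$ depends only on $\nu$, and equation (\ref{Gamma+gamma}) expresses $\Gamma(\nu)$ as the sum of $\gamma(\nu,x)$ and $\Gamma(\nu,x)$, with $x$ fixed as a dummy parameter on the right-hand side.

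First, I would differentiate both sides of (\ref{Gamma+gamma}) with respect to $\nu$, treating $x$ as a constant. The left-hand side yields the familiar derivative of the gamma function:
\begin{equation*}
\frac{d\Gamma(\nu)}{d\nu} = \Gamma(\nu)\,\psi(\nu),
\end{equation*}
while the right-hand side splits as $\partial\gamma(\nu,x)/\partial\nu + \partial\Gamma(\nu,x)/\partial\nu$. Solving algebraically for the unknown derivative gives
\begin{equation*}
\frac{\partial \Gamma(\nu,x)}{\partial \nu} = \Gamma(\nu)\,\psi(\nu) - \frac{\partial \gamma(\nu,x)}{\partial \nu}.
\end{equation*}

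Next, I would substitute the closed-form expression for $\partial\gamma(\nu,x)/\partial\nu$ obtained in the previous theorem (equation (\ref{D_gamma})). This substitution produces exactly the three terms on the right-hand side of (\ref{D_Gamma}): the $\Gamma(\nu)\psi(\nu)$ term coming from differentiation of the complete gamma, the $-\gamma(\nu,x)\ln x$ term appearing with a sign flip, and the $_{2}F_{2}$ term likewise reversing sign.

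There is essentially no obstacle here, since all the analytical work, namely the Brychkov sum formula invoked in the proof of (\ref{D_gamma}), has already been carried out. The only minor point to justify is interchangeability of limits in differentiating the complete gamma and the lower incomplete gamma separately, which is standard because both $\Gamma(\nu)$ and $\gamma(\nu,x)$ are differentiable in $\nu$ on their domain of validity. Therefore the proof reduces to a single algebraic manipulation combining (\ref{Gamma+gamma}) with (\ref{D_gamma}).
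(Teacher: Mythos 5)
Your proposal is correct and is precisely the paper's own argument: differentiate the identity $\Gamma(\nu)=\gamma(\nu,x)+\Gamma(\nu,x)$ with respect to $\nu$, use $\Gamma'(\nu)=\Gamma(\nu)\psi(\nu)$, and substitute the closed form (\ref{D_gamma}) for $\partial\gamma(\nu,x)/\partial\nu$. No further comment is needed.
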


\begin{proof}
Differentiate (\ref{Gamma+gamma})\ with respect to the parameter $\nu $ and
apply the result given in (\ref{D_gamma}).
\end{proof}

\begin{corollary}
From (\ref{D_gamma_int})\ and (\ref{D_gamma}), we calculate the following
integral:%
\begin{equation}
\int_{0}^{x}t^{\nu -1}e^{-t}\ln t\,dt=\gamma \left( \nu ,x\right) \ln x-%
\frac{x^{\nu }}{\nu ^{2}}\,_{2}F_{2}\left( \left.
\begin{array}{c}
\nu ,\nu \\
\nu +1,\nu +1%
\end{array}%
\right\vert -x\right) .  \label{gamma_int}
\end{equation}
\end{corollary}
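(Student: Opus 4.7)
The plan is essentially a one-line identification: the corollary follows by reading the two previously established equations as two different expressions for the same quantity. Specifically, equation~(\ref{D_gamma_int}) already gives the integral representation
\[
\frac{\partial \gamma(\nu,x)}{\partial \nu} = \int_0^x t^{\nu-1} e^{-t} \ln t\, dt,
\]
obtained by differentiating under the integral sign in the definition~(\ref{gamma_def}) of the lower incomplete gamma function; this step is routine since the integrand has a mild logarithmic singularity at $t=0$ that remains integrable for $\operatorname{Re}\nu > 0$.

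On the other hand, Theorem~(\ref{D_gamma}) provides the closed-form evaluation
\[
\frac{\partial \gamma(\nu,x)}{\partial \nu} = \gamma(\nu,x) \ln x - \frac{x^\nu}{\nu^2}\, {}_2F_2\!\left(\left.\begin{array}{c}\nu,\nu\\ \nu+1,\nu+1\end{array}\right|-x\right).
\]
Equating the right-hand sides of these two representations of $\partial \gamma(\nu,x)/\partial\nu$ immediately yields the stated formula~(\ref{gamma_int}).

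There is no real obstacle here: all the work has been done in Theorem~(\ref{D_gamma}), whose proof combined the series expansion~(\ref{gamma_series_1}) with the Brychkov sum formula. The only thing to note is that the identification is valid in the same parameter range in which Theorem~(\ref{D_gamma}) was proved, namely for $\operatorname{Re}\nu > 0$ so that both sides are finite and the term-by-term differentiation used to establish~(\ref{D_gamma}) is justified.
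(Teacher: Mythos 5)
Your proposal is correct and coincides with the paper's own (implicit) argument: the corollary is obtained simply by equating the integral representation (\ref{D_gamma_int}) of $\partial\gamma(\nu,x)/\partial\nu$ with the closed form (\ref{D_gamma}). The remark on the validity range $\operatorname{Re}\,\nu>0$ is a reasonable addition but does not change the substance.
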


\begin{corollary}
The following integral holds true for $x\in
\mathbb{R}
$:%
\begin{equation}
\int_{0}^{1}e^{xt}t^{\nu -1}\ln t\,dt=-\frac{1}{\nu ^{2}}\,_{2}F_{2}\left(
\left.
\begin{array}{c}
\nu ,\nu \\
\nu +1,\nu +1%
\end{array}%
\right\vert x\right) .  \label{gamma_int_2}
\end{equation}
\end{corollary}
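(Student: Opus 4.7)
The plan is to compute the integral directly by power-series expansion and then recognize the resulting series as a $_{2}F_{2}$, rather than going through the substitution route from (\ref{gamma_int}). I would proceed as follows.

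First, expand $e^{xt}=\sum_{k=0}^{\infty}x^{k}t^{k}/k!$ and interchange summation and integration (the series converges uniformly on $[0,1]$, so this is harmless), reducing the task to the elementary moment integral $\int_{0}^{1}t^{k+\nu-1}\ln t\,dt$. That integral equals $-1/(k+\nu)^{2}$, either by direct computation or by differentiating $\int_{0}^{1}t^{a-1}dt=1/a$ with respect to $a$. This yields
\begin{equation*}
\int_{0}^{1}e^{xt}t^{\nu-1}\ln t\,dt=-\sum_{k=0}^{\infty}\frac{1}{(k+\nu)^{2}}\frac{x^{k}}{k!}.
\end{equation*}

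Next, I would convert the summand into Pochhammer form. The identity $(\nu)_{k+1}=(\nu+k)(\nu)_{k}$ gives $(\nu)_{k}/(\nu+1)_{k}=\nu/(\nu+k)$, so $(\nu+k)^{-2}=\nu^{-2}(\nu)_{k}^{2}/(\nu+1)_{k}^{2}$. Factoring out $-1/\nu^{2}$ then produces exactly the defining series of $_{2}F_{2}\bigl(\nu,\nu;\nu+1,\nu+1;x\bigr)$, giving (\ref{gamma_int_2}).

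There is no real obstacle here; the only thing to watch is the rewrite of $1/(k+\nu)^{2}$ in Pochhammer form, which is purely algebraic. As a cross-check, one could also derive the identity from (\ref{gamma_int}) by the substitution $s=-xt$: writing the integral in terms of $\gamma(\nu,-x)$ and $_{2}F_{2}(\nu,\nu;\nu+1,\nu+1;x)$, the logarithmic terms $\ln(-x)$ and $\ln(-1/x)$ cancel and the prefactor $(-1/x)^{\nu}(-x)^{\nu}=1$, recovering the same closed form; this second route works cleanly on the slit plane but requires careful branch bookkeeping, which is why the direct series approach is preferable.
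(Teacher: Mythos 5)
Your proof is correct, but it takes a genuinely different route from the paper's. The paper obtains (\ref{gamma_int_2}) as a consequence of the closed form (\ref{gamma_int}) for the parameter derivative of the lower incomplete gamma function: it rescales the integration variable via $t=x\tau$ in $\int_0^x t^{\nu-1}e^{-t}\ln t\,dt$, splits off the $\ln x$ term (which reproduces $\gamma(\nu,x)\ln x$), matches the remainder against (\ref{gamma_int}) to isolate $\int_0^1\tau^{\nu-1}e^{-x\tau}\ln\tau\,d\tau$, and finally replaces $x$ by $-x$. You instead compute the integral from scratch: expand $e^{xt}$, use the elementary moment $\int_0^1 t^{k+\nu-1}\ln t\,dt=-1/(k+\nu)^2$, and rewrite $1/(k+\nu)^2=\nu^{-2}\bigl((\nu)_k/(\nu+1)_k\bigr)^2$ to recognize the $_2F_2$ series; all steps check out. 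Your argument is self-contained and more elementary: it does not rest on the handbook summation formula underlying (\ref{gamma_int}), and it avoids the final sign flip, whereas the paper's route has the virtue of exhibiting (\ref{gamma_int_2}) as an immediate corollary of the result it has just proved. The one point worth tightening is the interchange of sum and integral: uniform convergence of $\sum_k x^kt^k/k!$ on $[0,1]$ suffices only because the weight $t^{\nu-1}\ln t$ lies in $L^1(0,1)$ for $\mathrm{Re}\,\nu>0$ (implicitly required for the integral to exist at all); since that weight is unbounded near $t=0$ when $\mathrm{Re}\,\nu\leq 1$, the $L^1$ bound should be mentioned explicitly. Your sketched cross-check via substitution in (\ref{gamma_int}) is essentially the paper's own proof.
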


\begin{proof}
Perform the change of variables $t=z\,\tau $ in the integral given in (\ref%
{gamma_int}), split the result in two integrals and apply again the change
of variables $t=x\,\tau $ to the first integral,%
\begin{eqnarray}
\int_{0}^{x}t^{\nu -1}e^{-t}\ln t\,dt &=&x^{\nu }\left[ \ln
x\int_{0}^{1}\tau ^{\nu -1}e^{-x\tau }\,d\tau +\int_{0}^{1}t^{\nu
-1}e^{-x\tau }\ln \tau \,d\tau \right]  \notag \\
&=&\ln x\underset{\gamma \left( \nu ,x\right) }{\underbrace{%
\int_{0}^{x}t^{\nu -1}e^{-t}\,dt}}+\,x^{\nu }\int_{0}^{1}\tau ^{\nu
-1}e^{-x\tau }\ln \tau \,d\tau .  \label{gamma_int_resultado}
\end{eqnarray}%
Comparing (\ref{gamma_int})\ to (\ref{gamma_int_resultado}), we obtain (\ref%
{gamma_int_2}), as we wanted to prove.
\end{proof}

\begin{corollary}
According to the notation given in (\ref{H(1)_def}), the following reduction
formula holds true for $x\in
\mathbb{R}
$:%
\begin{equation}
H^{\left( 1\right) }\left( \left.
\begin{array}{c}
1 \\
b%
\end{array}%
\right\vert x\right) =-\frac{x\,e^{x}}{b^{2}}\,_{2}F_{2}\left( \left.
\begin{array}{c}
b,b \\
b+1,b+1%
\end{array}%
\right\vert -x\right) .  \label{H(1)_(1,b)_resultado}
\end{equation}
\end{corollary}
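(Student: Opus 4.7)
The plan is to start from the Euler integral representation (\ref{1F1_integral_representation}) of $\,_1F_1(1;b;x)$, differentiate with respect to $b$, and evaluate the resulting log-integral via (\ref{gamma_int_2}). Specialising (\ref{1F1_integral_representation}) to $a=1$ gives, for $\mathrm{Re}\,b>1$,
\[
\,_1F_1(1;b;x)=(b-1)\int_0^1 e^{xt}(1-t)^{b-2}\,dt,
\]
and differentiating both sides in $b$ yields
\[
H^{(1)}\!\left(\left.\!\begin{array}{c} 1 \\ b \end{array}\!\right\vert x\right)=\frac{\,_1F_1(1;b;x)}{b-1}+(b-1)\int_0^1 e^{xt}(1-t)^{b-2}\ln(1-t)\,dt.
\]
The substitution $u=1-t$ turns the remaining integral into $e^{x}\int_0^1 e^{-xu}u^{b-2}\ln u\,du$, which (\ref{gamma_int_2}) with $\nu=b-1$ and $x\mapsto-x$ evaluates as $-e^{x}(b-1)^{-2}\,_2F_2(b-1,b-1;b,b;-x)$.

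Using Kummer's transformation $\,_1F_1(1;b;x)=e^x\,_1F_1(b-1;b;-x)$ on the first term and factoring out $e^x/(b-1)$ collapses the expression to
\[
H^{(1)}\!\left(\left.\!\begin{array}{c} 1 \\ b \end{array}\!\right\vert x\right)=\frac{e^{x}}{b-1}\Bigl[\,_1F_1(b-1;b;-x)-{}_2F_2(b-1,b-1;b,b;-x)\Bigr].
\]
The proof then reduces to the contiguous identity
\[
\,_1F_1(b-1;b;y)-{}_2F_2(b-1,b-1;b,b;y)=\frac{(b-1)y}{b^{2}}\,_2F_2(b,b;b+1,b+1;y),
\]
which I would verify termwise: the relation $(b-1)_k/(b)_k=(b-1)/(b-1+k)$ makes the coefficient of $y^{k}/k!$ on the left-hand side collapse to $(b-1)k/(b-1+k)^{2}$ (vanishing at $k=0$), after which an index shift $k\mapsto k+1$ combined with $(b)_k/(b+1)_k=b/(b+k)$ identifies the surviving sum with the right-hand side. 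Setting $y=-x$ and cancelling the $(b-1)$ factors produces (\ref{H(1)_(1,b)_resultado}); analytic continuation in $b$ then removes the intermediate restriction $\mathrm{Re}\,b>1$.

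The main obstacle is this final contiguous identity. The algebra is elementary but delicate: the $1/b^{2}$ prefactor and the parameter shift $(b-1,b)\to(b,b+1)$ in the $\,_2F_2$ both emerge from a single index shift following the telescoping $\frac{b-1}{b-1+k}-\frac{(b-1)^{2}}{(b-1+k)^{2}}=\frac{(b-1)k}{(b-1+k)^{2}}$, and any factor mishandled en route spoils the normalisation.
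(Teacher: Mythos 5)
Your proposal is correct, and every step checks out: the specialisation of (\ref{1F1_integral_representation}) to $a=1$, the evaluation of the log-integral via (\ref{gamma_int_2}) with $\nu=b-1$, the Kummer transformation, and the final contiguous identity (whose termwise verification I reproduced: the coefficient of $y^{k}/k!$ on the left is indeed $(b-1)k/(b-1+k)^{2}$, and the shift $k\mapsto m+1$ together with $(b)_m/(b+1)_m=b/(b+m)$ yields exactly $(b-1)/(b+m)^{2}\cdot y^{m+1}/m!$, matching the right-hand side). However, your route is genuinely different from the paper's. The paper instead differentiates the closed-form relation $_{1}F_{1}\left(1;b;z\right)=1+z^{1-b}e^{z}\gamma\left(b,z\right)$ with respect to $b$ and substitutes the parameter derivative of the lower incomplete gamma function (\ref{D_gamma}); the terms involving $\ln z\,\gamma(b,z)$ cancel immediately and the result drops out in one line, with no contiguous relation and no Kummer transformation needed. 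Your argument buys a certain self-containedness --- it uses only the elementary integral (\ref{gamma_int_2}) rather than the full derivative formula (\ref{D_gamma}), and it makes explicit where the $1/b^{2}$ normalisation and the parameter shift $(b-1,b)\to(b,b+1)$ come from --- but it pays for this with the delicate termwise identity and an intermediate restriction $\mathrm{Re}\,b>1$ that must be removed by analytic continuation at the end. Both proofs are valid; the paper's is shorter because the cancellation of the logarithmic terms is automatic once (\ref{D_gamma}) is in hand.
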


\begin{proof}
Knowing that \cite[Eqn. 47:4:6]{oldham2009atlas}%
\begin{equation*}
_{1}F_{1}\left( \left.
\begin{array}{c}
1 \\
b%
\end{array}%
\right\vert z\right) =1+z^{1-b}e^{z}\gamma \left( b,z\right) ,
\end{equation*}%
and applying (\ref{D_gamma}), we calculate (\ref{H(1)_(1,b)_resultado}), as
we wanted to prove.
\end{proof}

\section{Reduction formulas for integral Whittaker\ functions $\mathrm{Mi}_{%
\protect\kappa ,\protect\mu }$ and $\mathrm{mi}_{\protect\kappa ,\protect\mu %
}$}

In \cite{apelblat2021integral}, we found some reduction formulas for the
integral Whittaker function $\mathrm{Mi}_{\kappa ,\mu }\left( x\right) $.
Next, we derive some new reduction formulas for $\mathrm{Mi}_{\kappa ,\mu
}\left( x\right) $ and $\mathrm{mi}_{\kappa ,\mu }\left( x\right) $ from
reduction formulas of the Whittaker function $\mathrm{M}_{\kappa ,\mu
}\left( x\right) $.

\begin{theorem}
The following reduction formula holds true for $x\in
\mathbb{R}
$, $n=0,1,2,\ldots $ and $\kappa >0$:%
\begin{equation}
\mathrm{Mi}_{\kappa +n,\kappa -1/2}\left( x\right) =2^{\kappa }\sum_{m=0}^{n}%
\binom{n}{m}\frac{\left( -2\right) ^{m}}{\left( 2\kappa \right) _{m}}\gamma
\left( \kappa +m,x/2\right) ,  \label{Mi_k+n,k-1/2}
\end{equation}%
where $\gamma \left( \nu ,z\right) $ denotes the lower incomplete gamma
function.
\end{theorem}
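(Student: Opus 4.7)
The plan is to reduce the Whittaker function in the integrand to a finite polynomial times $t^{\kappa}e^{-t/2}$, integrate term-by-term, and recognize each term as a lower incomplete gamma function.

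First I would invoke the reduction formula for $\mathrm{M}_{\kappa+n,\kappa-1/2}$ (the formula labelled (\ref{M_k+n,k-1/2}) in the paper and used earlier in the derivation of (\ref{M_n_1/2})). Its origin is transparent: with $\mu=\kappa-1/2$ and first parameter $\kappa+n$ in (\ref{M_k,mu_def}), the top parameter of the Kummer function becomes $\tfrac12+\mu-(\kappa+n)=-n$, so $_{1}F_{1}(-n;2\kappa;t)$ terminates. Expanding via $(-n)_m=(-1)^m n!/(n-m)!$ gives
\begin{equation*}
\mathrm{M}_{\kappa+n,\kappa-1/2}(t)=t^{\kappa}e^{-t/2}\sum_{m=0}^{n}\binom{n}{m}\frac{(-1)^m\,t^m}{(2\kappa)_m}.
\end{equation*}

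Next I would substitute this into the definition (\ref{Mi_def}) of $\mathrm{Mi}_{\kappa,\mu}$, divide by $t$, and interchange the (finite) sum with the integral to obtain
\begin{equation*}
\mathrm{Mi}_{\kappa+n,\kappa-1/2}(x)=\sum_{m=0}^{n}\binom{n}{m}\frac{(-1)^m}{(2\kappa)_m}\int_{0}^{x}t^{\kappa+m-1}e^{-t/2}\,dt.
\end{equation*}
The inner integral is then handled by the change of variables $u=t/2$, which yields $2^{\kappa+m}\gamma(\kappa+m,x/2)$ via the definition (\ref{gamma_def}). Pulling out the common factor $2^{\kappa}$ and absorbing the remaining $2^m$ into $(-1)^m$ delivers the stated formula.

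There is no real obstacle: the one prerequisite is the reduction formula for $\mathrm{M}_{\kappa+n,\kappa-1/2}$, which is assumed available from Appendix C, and the rescaling of the integral is routine. The condition $\kappa>0$ is exactly what guarantees the convergence of $\gamma(\kappa+m,x/2)$ at the lower endpoint, so the exchange of sum and integral, and the identification of each piece with an incomplete gamma function, are both legitimate throughout the range $m=0,1,\ldots,n$.
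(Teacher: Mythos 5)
Your proposal is correct and follows essentially the same route as the paper: the paper obtains the finite expansion of $\mathrm{M}_{\kappa+n,\kappa-1/2}$ by first invoking $_{1}F_{1}(-n;b;z)=\frac{n!}{(b)_{n}}L_{n}^{(b-1)}(z)$ and then expanding the Laguerre polynomial, which yields exactly the same terminating sum you get by expanding $_{1}F_{1}(-n;2\kappa;t)$ directly. The subsequent term-by-term integration and identification with $\gamma(\kappa+m,x/2)$ coincide with the paper's argument.
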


\begin{proof}
Apply to the definition of the Whittaker function (\ref{M_k,mu_def})\ the
reduction formula \cite[Eqn. 7.11.1(17)]{prudnikov1986integrals}%
\begin{equation*}
_{1}F_{1}\left( \left.
\begin{array}{c}
-n \\
b%
\end{array}%
\right\vert z\right) =\frac{n!}{\left( b\right) _{n}}L_{n}^{\left(
b-1\right) }\left( z\right) ,
\end{equation*}%
to obtain \cite[Eqn. 13.18.17]{olver2010nist}
\begin{equation}
\mathrm{M}_{\kappa +n,\kappa -1/2}\left( x\right) =\frac{n!\,e^{-x/2}x^{%
\kappa }}{\left( 2\kappa \right) _{n}}L_{n}^{\left( 2\kappa -1\right)
}\left( x\right) ,  \label{M_k+n,k-1/2}
\end{equation}%
where \cite[Eqn. 4.17.2]{lebedev1965special}%
\begin{equation}
L_{n}^{\left( \alpha \right) }\left( x\right) =\sum_{m=0}^{n}\frac{\Gamma
\left( n+\alpha +1\right) }{\Gamma \left( m+\alpha +1\right) }\frac{\left(
-x\right) ^{m}}{m!\left( n-m\right) !},  \label{Laguerre_def}
\end{equation}%
denotes the Laguerre polynomials. Insert (\ref{Laguerre_def})\ in (\ref%
{M_k+n,k-1/2})\ and integrate term by term according to the definition of
the integral Whittaker function (\ref{Mi_def}), to get%
\begin{eqnarray*}
&&\mathrm{Mi}_{\kappa +n,\kappa -1/2}\left( x\right) \\
&=&\sum_{m=0}^{n}\binom{n}{m}\frac{\left( -1\right) ^{m}}{\left( 2\kappa
\right) _{m}}\int_{0}^{x}e^{-t/2}t^{\kappa +m-1}dt.
\end{eqnarray*}%
Finally, take into account the defintion of the lower incomplete gamma
function (\ref{gamma_def}) and simplify the result to arrive at (\ref%
{Mi_k+n,k-1/2}), as we wanted to prove.
\end{proof}

\begin{remark}
Taking $n=0$ in (\ref{Mi_k+n,k-1/2}), we recover the formula given in \cite%
{apelblat2021integral}.
\end{remark}

\begin{theorem}
The following reduction formula holds true for $x>0$, $n=0,1,2,\ldots $ and $%
\kappa \in
\mathbb{R}
$:%
\begin{equation}
\mathrm{mi}_{\kappa +n,\kappa -1/2}\left( x\right) =2^{\kappa }\sum_{m=0}^{n}%
\binom{n}{m}\frac{\left( -2\right) ^{m}}{\left( 2\kappa \right) _{m}}\Gamma
\left( \kappa +m,x/2\right) ,  \label{mi_k+n,k-1/2}
\end{equation}%
where $\Gamma \left( \nu ,z\right) $ denotes the upper incomplete gamma
function.
\end{theorem}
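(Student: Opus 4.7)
The plan is to mirror the argument used for the preceding theorem about $\mathrm{Mi}_{\kappa+n,\kappa-1/2}(x)$, replacing the finite integration interval with the infinite one coming from (\ref{mi_def}). First I would start from the closed-form reduction formula (\ref{M_k+n,k-1/2}),
\[
\mathrm{M}_{\kappa+n,\kappa-1/2}(x) = \frac{n!\,e^{-x/2}x^{\kappa}}{(2\kappa)_n}\,L_n^{(2\kappa-1)}(x),
\]
and expand the Laguerre polynomial using its explicit coefficient representation (\ref{Laguerre_def}). Substituting this into the defining integral (\ref{mi_def}) and interchanging sum and integral (justified since the sum is finite) yields
\[
\mathrm{mi}_{\kappa+n,\kappa-1/2}(x) = \sum_{m=0}^{n}\binom{n}{m}\frac{(-1)^m}{(2\kappa)_m}\int_{x}^{\infty}e^{-t/2}t^{\kappa+m-1}\,dt.
\]

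Next I would evaluate the remaining integral by the change of variables $u = t/2$, which gives $\int_{x}^{\infty}e^{-t/2}t^{\kappa+m-1}\,dt = 2^{\kappa+m}\,\Gamma(\kappa+m,\,x/2)$, using the definition (\ref{Gamma_def}) of the upper incomplete gamma function. Pulling the factor $2^{\kappa}$ out of the sum and absorbing $2^m$ into $(-1)^m$ to produce $(-2)^m$ then yields the claimed expression. The convergence of the integral for $x > 0$ requires no restriction on $\kappa$ (the integrand decays exponentially and the only potential singularity is at $t = 0$, which lies outside the range of integration), which is why the statement holds for all $\kappa \in \mathbb{R}$, in contrast to the theorem for $\mathrm{Mi}_{\kappa+n,\kappa-1/2}$ where the lower limit at $0$ forced $\kappa > 0$.

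There is no real obstacle here; the only point requiring mild care is checking that no convergence issue arises when $\kappa+m \le 0$, but this is automatic because integration starts at $x > 0$. A final remark would note that the case $n = 0$ reproduces the reduction formula for $\mathrm{mi}_{\kappa,\kappa-1/2}(x)$ reported in \cite{apelblat2021integral}, exactly parallel to the remark following (\ref{Mi_k+n,k-1/2}).
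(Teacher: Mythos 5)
Your proposal is correct and follows essentially the same route as the paper, whose proof is literally ``follow similar steps as in the previous theorem, but consider the definition of the upper incomplete gamma function'': expand $L_n^{(2\kappa-1)}$ via (\ref{Laguerre_def}) in the reduction formula (\ref{M_k+n,k-1/2}), integrate term by term over $(x,\infty)$, and identify each integral as $2^{\kappa+m}\Gamma(\kappa+m,x/2)$. Your added remark explaining why the lower-limit singularity at $t=0$ no longer forces $\kappa>0$ is a correct and useful observation that the paper leaves implicit.
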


\begin{proof}
Follow similar steps as in the previous theorem, but consider the definition
of the upper incomplete gamma function (\ref{Gamma_def}).
\end{proof}

\begin{theorem}
The following reduction formula holds true for $x\in
\mathbb{R}
$, $n=0,1,2,\ldots $ and $\kappa >0$:%
\begin{equation}
\mathrm{Mi}_{-\kappa -n,\kappa -1/2}\left( x\right) =\left( -1\right) ^{-%
\mathrm{sign}\left( x\right) \kappa }2^{\kappa }\sum_{m=0}^{n}\binom{n}{m}%
\frac{\left( -2\right) ^{m}}{\left( 2\kappa \right) _{m}}\gamma \left(
\kappa +m,-x/2\right) .  \label{Mi_-k-n,k-1/2}
\end{equation}
\end{theorem}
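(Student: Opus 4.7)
The proof should run parallel to the one given for $\mathrm{Mi}_{\kappa+n,\kappa-1/2}(x)$, but a preliminary Kummer transformation is needed to expose a Laguerre polynomial, since the first parameter of the $_1F_1$ in $M_{-\kappa-n,\kappa-1/2}(x)$ is $2\kappa+n$, which is not a non-positive integer. My plan is therefore to first rewrite the Whittaker function, then expand, integrate term by term, and finally reduce each resulting integral to an incomplete gamma function.

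To carry out the first step, I would apply Kummer's transformation $_1F_1(a;b;z)=e^{z}\,_1F_1(b-a;b;-z)$ to (\ref{M_k,mu_def}) with $\kappa\mapsto -\kappa-n$ and $\mu=\kappa-1/2$. Since the new first parameter becomes $(1+2\mu)-(1/2+\mu-\kappa_{\text{new}})=1/2+\mu+\kappa_{\text{new}}=-n$, this yields
\[
M_{-\kappa-n,\kappa-1/2}(x)=x^{\kappa}e^{x/2}\,_1F_1(-n;2\kappa;-x).
\]
Using $_1F_1(-n;b;z)=\frac{n!}{(b)_n}L_n^{(b-1)}(z)$ and the explicit form (\ref{Laguerre_def}) of the Laguerre polynomial at the argument $-x$, the binomial coefficients collapse as in the previous proof to give
\[
M_{-\kappa-n,\kappa-1/2}(x)=x^{\kappa}e^{x/2}\sum_{m=0}^{n}\binom{n}{m}\frac{x^{m}}{(2\kappa)_m}.
\]

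Substituting this into the definition (\ref{Mi_def}) and integrating term by term reduces the computation to the evaluation of $\int_0^x e^{t/2}t^{\kappa+m-1}\,dt$ for each $m$. Here I would introduce the change of variables $u=-t/2$, which carries this into an integrand of the form $e^{-u}u^{\kappa+m-1}$ running over $[0,-x/2]$. Comparing with the series representation of $\gamma(\nu,z)$ in (\ref{gamma_series_1}) (or its analytic continuation obtained by changing the integration variable in (\ref{gamma_def})) gives the identification
\[
\int_0^x e^{t/2}t^{\kappa+m-1}\,dt=(-2)^{\kappa+m}\,\gamma(\kappa+m,-x/2),
\]
and factoring $(-2)^{\kappa}=(-1)^{\kappa}2^{\kappa}$ out of the sum produces the announced prefactor together with the $(-2)^m/(2\kappa)_m$ inside.

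The main obstacle is branch selection: for non-integer $\kappa$ the quantities $(-2)^{\kappa+m}$ and $\gamma(\kappa+m,-x/2)$ are only defined once a branch cut for the power function is fixed, and moreover the sign of $x$ controls which side of the real axis is approached under the substitution $u=-t/2$. A consistent use of the principal branch in the series representation of $\gamma$ gives the factor $(-1)^{\kappa}$ for $x>0$, and the symmetric argument for $x<0$ (where in addition $x^{\kappa}$ in the Whittaker expansion itself introduces a branch factor) produces $(-1)^{-\kappa}$; writing these uniformly as $(-1)^{-\mathrm{sign}(x)\kappa}$ recovers (\ref{Mi_-k-n,k-1/2}). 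Once the branch bookkeeping is in place, the remainder of the argument is purely mechanical and mirrors the proof of (\ref{Mi_k+n,k-1/2}).
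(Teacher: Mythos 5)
Your proposal is correct and follows essentially the same route as the paper: the paper reaches the same intermediate reduction $\mathrm{M}_{-\kappa-n,\kappa-1/2}(x)=(-1)^{-\mathrm{sign}(x)\kappa}\,\frac{n!\,e^{x/2}(-x)^{\kappa}}{(2\kappa)_{n}}L_{n}^{(2\kappa-1)}(-x)$ by invoking the reflection formula $\mathrm{M}_{-\kappa,\mu}(x)=(-1)^{-\mathrm{sign}(x)(\mu+1/2)}\mathrm{M}_{\kappa,\mu}(-x)$ rather than Kummer's transformation (the two are equivalent, and your intermediate expression $x^{\kappa}e^{x/2}\sum_{m=0}^{n}\binom{n}{m}x^{m}/(2\kappa)_{m}$ agrees with it), and then integrates term by term into lower incomplete gamma functions of argument $-x/2$ exactly as you do. The only slip is in your final branch bookkeeping: with principal branches the prefactor must be $(-1)^{-\kappa}$ for $x>0$ (so as to cancel the phase $e^{i\pi(\kappa+m)}$ that $\gamma(\kappa+m,-x/2)$ carries at negative argument) and $(-1)^{+\kappa}$ for $x<0$, i.e.\ $(-1)^{-\mathrm{sign}(x)\kappa}$; your stated cases are reversed and as written would yield $(-1)^{+\mathrm{sign}(x)\kappa}$.
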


\begin{proof}
From\ the property for $x>0$ \cite[Eqn. 48:13:3]{oldham2009atlas}
\begin{equation*}
\mathrm{M}_{\kappa ,\mu }\left( -x\right) =\left( -1\right) ^{\mu +1/2}%
\mathrm{M}_{-\kappa ,\mu }\left( x\right) ,
\end{equation*}%
we have, for $x\in
\mathbb{R}
$,
\begin{equation}
\mathrm{M}_{-\kappa ,\mu }\left( x\right) =\left( -1\right) ^{-\mathrm{sign}%
\left( x\right) \left( \mu +1/2\right) }\mathrm{M}_{\kappa ,\mu }\left(
-x\right) ,  \label{Reflection_M}
\end{equation}%
Apply (\ref{Reflection_M})\ to (\ref{M_k+n,k-1/2})\ to obtain%
\begin{equation}
\mathrm{M}_{-\kappa -n,-\kappa -1/2}\left( x\right) =\left( -1\right) ^{-%
\mathrm{sign}\left( x\right) \,\kappa }\frac{n!\,e^{x/2}\left( -x\right)
^{\kappa }}{\left( 2\kappa \right) _{n}}L_{n}^{\left( 2\kappa -1\right)
}\left( -x\right) .  \label{M_-k-n,k-1/2}
\end{equation}%
Now, insert (\ref{Laguerre_def})\ in (\ref{M_k+n,k-1/2})\ and integrate term
by term according to the definition of the integral Whittaker function (\ref%
{Mi_def}), to get%
\begin{eqnarray*}
&&\mathrm{Mi}_{-\kappa -n,\kappa -1/2}\left( x\right) \\
&=&\left( -1\right) ^{-\mathrm{sign}\left( x\right) \,\kappa }\sum_{m=0}^{n}%
\frac{1}{\left( 2\kappa \right) _{m}}\binom{n}{m}\int_{0}^{x}e^{t/2}t^{m-1}%
\left( -t\right) ^{\kappa }dt.
\end{eqnarray*}%
Finally, take into account the defintion of the lower incomplete gamma
function (\ref{gamma_def}) and simplify the result to arrive at (\ref%
{Mi_-k-n,k-1/2}), as we wanted to prove.
\end{proof}

\begin{remark}
It is worth noting that we could not locate the reduction formula (\ref%
{M_-k-n,k-1/2})\ in the literature.
\end{remark}

\section{Reduction formulas for the Whittaker function $\mathrm{M}_{\protect%
\kappa ,\protect\mu }\left( x\right) $\label{Appendix_Reduction_Whittaker}}

For convenience of the readers, reduction formulas for the Whittaker
function $\mathrm{M}_{\kappa ,\mu }\left( x\right) $ are presented in their
explicit form in Table \ref{Table_5} for $x\in
\mathbb{R}
$.

\begin{center}
\begin{table}[tbp] \centering%
\caption{Whittaker function $M_{\kappa,\mu}(x)$  for particular values of $\kappa$ and
$\mu$.}%
\begin{tabular}{|c|c|c|}
\hline
$\kappa $ & $\mu $ & $\mathrm{M}_{\kappa ,\mu }\left( x\right) $ \\
\hline\hline
$-\frac{1}{4}$ & $\frac{1}{4}$ & $\frac{\sqrt{\pi }}{2}e^{x/2}x^{1/4}\mathrm{%
erf}\left( \sqrt{x}\right) $ \\ \hline
$-\frac{1}{2}$ & $\frac{1}{2}$ & $x\left[ I_{0}\left( \frac{x}{2}\right)
+I_{1}\left( \frac{x}{2}\right) \right] $ \\ \hline
$-\frac{1}{2}$ & $\frac{1}{6}$ & $2^{-2/3}x\,\Gamma \left( \frac{2}{3}%
\right) \left[ I_{-1/3}\left( \frac{x}{2}\right) +I_{2/3}\left( \frac{x}{2}%
\right) \right] $ \\ \hline
$-\frac{1}{2}$ & $1$ & $x^{-1/2}e^{-x/2}\left[ 2\,e^{x}\left( x-1\right) +2%
\right] $ \\ \hline
$0$ & $0$ & $\sqrt{x}\,I_{0}\left( \frac{x}{2}\right) $ \\ \hline
$0$ & $\frac{1}{2}$ & $2\,\sinh \left( \frac{x}{2}\right) $ \\ \hline
$0$ & $1$ & $4\sqrt{x}\,I_{1}\left( \frac{x}{2}\right) $ \\ \hline
$0$ & $\frac{3}{2}$ & $12\left[ \cosh \left( \frac{x}{2}\right) -\frac{2}{x}%
\sinh \left( \frac{x}{2}\right) \right] $ \\ \hline
$0$ & $\frac{5}{2}$ & $120\,x^{-2}\left[ \left( x^{2}+12\right) \sinh \left(
\frac{x}{2}\right) -6\,x\,\cosh \left( \frac{x}{2}\right) \right] $ \\ \hline
$\frac{1}{6}$ & $0$ & $\sqrt{x}e^{-x/2}L_{-1/3}\left( x\right) $ \\ \hline
$\frac{1}{4}$ & $-\frac{1}{4}$ & $x^{1/4}e^{-x/2}$ \\ \hline
$\frac{1}{4}$ & $\frac{1}{4}$ & $x^{1/4}e^{x/2}F\left( \sqrt{x}\right) $ \\
\hline
$\frac{1}{3}$ & $0$ & $\sqrt{x}e^{-x/2}L_{-1/6}\left( x\right) $ \\ \hline
$\frac{1}{2}$ & $\frac{1}{6}$ & $2^{-2/3}x\,\Gamma \left( \frac{2}{3}\right) %
\left[ I_{-1/3}\left( \frac{x}{2}\right) -I_{2/3}\left( \frac{x}{2}\right) %
\right] $ \\ \hline
$\frac{1}{2}$ & $\frac{1}{4}$ & $2^{-1/2}x\,\Gamma \left( \frac{3}{4}\right) %
\left[ I_{-1/4}\left( \frac{x}{2}\right) -I_{3/4}\left( \frac{x}{2}\right) %
\right] $ \\ \hline
$\frac{1}{2}$ & $\frac{1}{2}$ & $x\left[ I_{0}\left( \frac{x}{2}\right)
-I_{1}\left( \frac{x}{2}\right) \right] $ \\ \hline
$\frac{1}{2}$ & $1$ & $2\,x^{-1/2}e^{-x/2}\,\left( e^{x}-x-1\right) $ \\
\hline
$\frac{1}{2}$ & $2$ & $12\,x^{-3/2}e^{-x/2}\,\left[ 2\,e^{x}\left(
x-3\right) +x^{2}+4x+6\right] $ \\ \hline
$1$ & $-\frac{3}{2}$ & $e^{-x/2}\left( \frac{x}{2}+1+\frac{1}{x}\right) $ \\
\hline
$1$ & $1$ & $\frac{4}{3}\sqrt{x}\left[ x\,I_{0}\left( \frac{x}{2}\right)
-\left( x+1\right) I_{1}\left( \frac{x}{2}\right) \right] $ \\ \hline
$1$ & $\frac{3}{2}$ & $x^{-1}e^{-x/2}\,\left( 6\,e^{x}-3x^{2}-6\,x-6\right) $
\\ \hline
$1$ & $2$ & $\frac{32}{5}x^{-1/2}\left[ \left( x^{2}+4x+12\right)
I_{1}\left( \frac{x}{2}\right) -\left( x^{2}+3x\right) I_{0}\left( \frac{x}{2%
}\right) \right] $ \\ \hline
$2$ & $2$ & $\frac{32}{35}x^{-1/2}\left[ x\left( 2x^{2}+2x+3\right)
I_{0}\left( \frac{x}{2}\right) -2\left( x^{3}+2x^{2}+4x+6\right) I_{1}\left(
\frac{x}{2}\right) \right] $ \\ \hline
\end{tabular}%
\label{Table_5}%
\end{table}%
\end{center}

\bibliographystyle{plain}

\end{document}